\theoremstyle{plain}
\newtheorem{theorem}{Theorem}[section]
\newtheorem{prop}[theorem]{Proposition}
\newtheorem{lemma}[theorem]{Lemma}
\newtheorem{example}{Example}[section]
\newtheorem{assumption}{Assumption}
\long\def\comment#1{{}}
\def\vf#1{\boldsymbol{#1}}
\def\Grp#1{\left(#1\right)}
\def\Cbr#1{\left\{#1\right\}}
\def\Sbr#1{\left[#1\right]}
\def\Abs#1{\left|#1\right|}
\def\normx#1{\|#1\|}
\def\Ang#1{\left\langle#1\right\rangle}
\def\nth#1{\frac{1}{#1}}
\def\cf#1{\mathbf{1}\Cbr{#1}}
\def\tp{\sp{\top}}
\def\eno#1#2{{#1}_1, \ldots, {#1}_{#2}}
\def\gv{\,|\,}
\def\argf#1{\mathop{\arg#1\,}}
\def\sm #1#2{\sum_{#1\le #2}}
\def\mx #1#2{\max_{#1\le #2}}
\def\Sp#1{\sp{(#1)}}
\def\mean{\text{\sf E}}
\def\var{\text{\sf Var}}
\def\prob{\text{\sf Pr}}
\def\Reals{\mathbb{R}}
\def\dev#1#2{\Sbr{\hspace{#1}\Sbr{#2}\hspace{#1}}}
\def\devx#1#2{[\hspace{#1}[{#2}]\hspace{#1}]}
\def\est#1{\smash{\widehat{#1}}}
\def\sppt{{\rm spt}}
\def\dd{\mathrm{d}}
\def\sign{\text{sign}}
\def\cY{\mathcal{Y}}
\def\rx{\varepsilon}
\def\vtheta{{\vf\theta}}
\begin{document}

\begin{center}
  \Large{\bf
    Stochastic Lipschitz continuity for high dimensional Lasso with
    multiple linear covariate structures or hidden linear covariates
  }
  \\[1em]
  \normalsize
  Short title: Estimation for various high dimensional linear structures
  \\[1em] 
  Zhiyi Chi \\
  Department of Statistics\\
  University of Connecticut \\
  215 Glenbrook Road, U-4120 \\
  Storrs, CT 06269, USA \\[1em]
  \today
\end{center}

\begin{abstract}
  Two extensions of generalized linear models are considered.  In the
  first one, response variables depend on multiple linear combinations
  of covariates.  In the second one, only response variables are
  observed while the linear covariates are missing.  We derive
  stochastic Lipschitz continuity results for the loss functions
  involved in the regression problems and apply them to get bounds on
  estimation error for Lasso.  Multivariate comparison results on
  Rademacher complexity are obtained as tools to establish the
  stochastic Lipschitz continuity results.

  \medbreak\noindent
  \textit{AMS 2010 subject classification.\/}
  62G08; 60E15.

  \medbreak\noindent
  \textit{Key words and phrases.\/} stochastic Lipschitz, Lasso, measure
  concentration, generalized linear models, hidden variable,
  multilinear.

  \medbreak\noindent
  \textit{Acknowledgement.\/} Research partially supported by NSF
  grant DMS-07-06048.

\end{abstract}
\section{Introduction}

In recent years, much attention has been paid to regularized
regression for high dimensional linear models \cite{bickel:09:as,
bunea:etal:07, bunea:10:as, candes:plan:09, ravikumar:etal:10, zhang:09,
zhao:yu:06}.  Meanwhile, a much smaller body of works has
been devoted to such regression for high dimensional
generalized linear models 
\cite{vandegeer:08}.  Despite the impressive progress, the full
potential of regularized regression for models with underlying linear
structures seems far from being fully explored.

Regression is a type of optimization.  In current literature on high
dimensional generalized linear models, the target, or loss, functions
being optimized have the form $\gamma_i(X_i\tp \vf u, Y_i)$, where
$\gamma_i$ is a known function, $X_i$ a high dimensional covariate,
$\vf u$ a parameter, and $Y_i$ a response variable.  Two possible
extensions of the regression can be identified as follows.  First,
instead of one parameter vector, a small number of parameter vectors
may appear in a model, so that the loss functions become
$\gamma_i(X_i\tp \vf u_1, \ldots, X_i\tp\vf u_k, Y_i)$. 
Parameter estimation involving multiple linear combinations of
covariates has been considered at least in neuroscience, where
multiple informative dimensions of visual signals of very high
dimension need to be estimated based on a relatively small amount of
data, so that the neural activity in a visual system may be better
characterized \cite{atencio:08}.  The goal of the effort is rather
ambitious, which is to estimate the functional form of $\gamma_i$
nonparametrically along with a few informative dimensions
characterized by $\eno{\vf u} k$.  However, it seems that a rigorous
development toward this goal is difficult using currently available
statistical methods.  A more modest goal is to estimate $\eno{\vf u}
k$ while having $\gamma_i$ fixed.  For apparently more flexible loss
functions $\gamma_i(\theta, X_i\tp\vf u_1, \ldots,
X_i\tp\vf u_k, Y_i)$, where $\theta$ is a parameter controlling the
shape of $\gamma_i$, by adding auxiliary covariates into $X_i$, one
can reformulate them into $\gamma_i(Z_i\tp \vf v_1, \ldots, Z_i\tp\vf
v_l, Y_i)$.  Of course, the dimension of $\theta$ has to be low.  Once
the dimension of $\theta$ gets high, the estimation becomes no less
challenging than the aforementioned nonparametric estimation.

Second, instead of both the covariates and response variables being
observed, $X_i$ may be missing and only $Y_i$ are observed.  To be
specific, suppose we wish to use regularized likelihood estimation.
Each loss function is then the logarithm of the marginal of $Y_i$ at
parameter value $\vf u$, which no longer has the form $\gamma_i(X_i\tp
\vf u, Y_i)$.  Parameter estimation with missing data is certainly of
interest in its own right.  Naturally, one has to make more
assumptions on the structure of the random object $(X_i, Y_i)$ in
order to estimate $\vf u$.  The issue is, provided such assumptions
are made, whether regularized regression can still work when $\vf u$ 
is of high dimension.

To attack these two regression problems, we use a method in
\cite{chi:10b}, which establishes estimation precision by first
obtaining certain stochastic Lipschitz continuity results for the
total loss function and then combining it with $\ell_1$ regularized
regression (Lasso).  Basically, if 
$L(\vf u)$ denotes the empirical total loss, with $\vf u = (\eno{\vf u}
k)$, then the so called stochastic Lipschitz continuity is concerned
with the upper tail behavior of the supremum of 
$$
\frac{|(L(\vf u) - \mean L(\vf u))- (L(\vf v)-\mean L(\vf v))|}
{\sm j k \normx{\vf u_k - \vf v_k}_1}, \quad\vf u\not=\vf v,
$$
where $\eno{\vf u} k$ are allowed to vary over a certain domain of
parameter values.  Note that we are interested in the fluctuation of
the loss, i.e., $L(\vf u) - \mean L(\vf u)$, rather than the loss
itself.  If $\eno{\vf v} k$ are also allowed to vary over
the domain, then by definition, the supremum is just the Lipschitz
coefficient of $L(\vf u) - \mean L(\vf u)$.  With a little abuse of
language, if $\eno{\vf v} k$ are fixed, the supremum will be referred
to as the local Lipschitz coefficient at $\vf v$.  As in the display,
we shall always consider stochastic Lipschitz continuity with respect
to (wrt) $\ell_1$ norm.

For linear models, stochastic Lipschitz continuity has already been
recognized as a useful tool to study high dimensional Lasso (cf.\
\cite{bickel:09:as, bunea:07:as} and references therein).  The issue
becomes significantly more involved for the problems we consider.  Our
solution requires certain comparison results on Rademacher complexity
\cite{ledoux:91}.  The topic of Rademacher complexity has recently
generated quite amount of interest \cite{ambroladze:07, bartlett:05,
 koltchinskii:06, meir:zhang:04, zhang:yu:05}.  The results in these
works are on processes of the type $\sum \rx_i f_i(t_i)$, where
$\rx_i$ are independent Rademacher variables, i.e., $\Pr\Cbr{\rx_i =
  1} = \Pr\Cbr{\rx_i=-1}=1/2$.  Without going into detail, the point
is that $f_i$ are univariate, i.e., $t_i\in \Reals$.  It turns out we
need comparison results involving multivariate $f_i$.  However, it
appears that such results are not yet available in the literature.  As
a technical preparation, two such results will be given in Section
\ref{sec:compare}, both having the classical form as in
\cite{ledoux:91}.  Similar to \cite{ambroladze:07}, some of the
results can be extended to symmetric integrable $\rx_i$ that need not
be identically distributed.  This is potentially useful for dealing
with stochastic Lipschitz continuity involving unbounded noise terms
\cite{chi:10b}, such as subgaussian ones that allow similar measure
concentration as bounded noise (cf.\ \cite{ledoux:01}, p.~41).  A
detailed study on this, however, is beyond the scope of the article.

Sections \ref{sec:stoch-lip} and \ref{sec:reg} deal with regression
involving multiple linear combinations of covariates.  First, in
Section \ref{sec:stoch-lip}, we use the multivariate comparison
results in Section \ref{sec:compare} to derive stochastic Lipschitz
continuity for loss functions of the form $\sm i N \gamma_i(Z_i\vf u,
Y_i)$, where $\eno Z N$ are fixed matrices and $\eno Y N$ are
independent random variables.  It is not hard to see that such loss
functions include $\sm i N \gamma_i(X_i\tp\vf u_1, \ldots, X_i\tp\vf
u_k, Y_i)$ as special cases, if each $Z_i$ is appropriately
constructed from $X_i$.  For the parameter estimation considered in
Section \ref{sec:reg}, local stochastic Lipschitz continuity is
sufficient for our need.  However, (the usual) stochastic Lipschitz
continuity is known in the context of linear regression and not
difficult to be established following the method for local stochastic
Lipschitz continuity.  For completeness, we shall give a result on
stochastic Lipschitz continuity as well.  In Section \ref{sec:reg}, we
apply the results in Section \ref{sec:stoch-lip} to the Lasso
estimator
$$
(\eno{\est\vtheta} k)
= \argf\min_{\vf u \in D} \Cbr{
  \sm i N \gamma_i(X_i\tp\vf u_1, \ldots, X_i\tp\vf u_k, Y_i) +
  \lambda \sm j k \normx{\vf u_j}_1},
$$
where $D$ is a domain of parameter values and $\lambda>0$ is a tuning
parameter.  Comparing to the case where $k=1$, the issue is that the
Lasso only gives a comparison between $\sm jk\normx{\est\vtheta_j}_1$
and $\sm j k \normx{\vtheta_j}_1$, where $\vtheta_j$ are the true
parameter values.  However, it does not provide direct comparisons
between $\normx{\est\vtheta_j}_1$ and $\normx{\vtheta_j}_1$ for
individual $j\le k$, which are needed to bound the total $\ell_2$
error of $\est\vtheta_j$.  This issue can be resolved by using an
eigenvalue condition on the design matrix consisting of $X_i$
\cite{bickel:09:as}.

Section \ref{sec:hidden} deals with $\ell_1$ regularized likelihood
estimation when the covariates are missing.  Most effort of this
Section is spent on establishing stochastic Lipschitz continuity for
loss functions expressed, roughly speaking, as $\sm i N \ln\int
f(x\tp\vf u, Y_i)\, d\mu(x\gv Y_i)$.  As can be expected, the
logarithmic and integral transformations in the expression are the
major obstacles to the exploitation of the implicit linearity.
Comparison results on Rademacher complexity, including those in
Section \ref{sec:compare}, will be invoked to get them out of the
way.  After stochastic Lipschitz continuity is in place, the rest of
the work is similar to the full data case and actually requires fewer
technical assumptions.  Finally, proofs of auxiliary results are
collected in the Appendix.

\subsection{Notation}
For $\vf s\in \Reals^k$, denote by $\eno s k$ its coordinates
and $\sppt(\vf s)$ its support, i.e, $\{j: s_j\not=0\}$.  For
$J\subset\{1,\ldots, k\}$, denote by $\pi_J$ the function that maps
$\vf s$ to $(s_1', \ldots, s_k')$ with $s_j'=s_j\cf{j\in J}$.
For $q\in [1,\infty]$, the $\ell_q$ norm of $\vf s\in \Reals^k$ is
$$
\normx{\vf s}_q = 
\begin{cases}
  \Grp{\sm j k |s_j|^q}^{1/q} & \text{if } q<\infty,
  \\[2ex]
  \mx j k |s_j| & \text{if } q=\infty.
\end{cases}\qquad
$$

A function $h$ from $\Reals^k$ to $\Reals$ is called $(M,
\ell_q)$-Lipschitz, if
$$
|h(\vf t) - h(\vf s)| \le M\normx{\vf t-\vf s}_q, \qquad
\text{for all}\ \ \vf s,\, \vf t\in\Reals^k.
$$
If $h$ is defined on $\Reals$, then, as all the $\ell_q$ norms are the
same on $\Reals$, we simply say $h$ is $M$-Lipschitz.

For any random variable $\xi$, denote
$$
\dev{-.35ex}{\xi} = \xi - \mean \xi.
$$
If $\eno X N$ and $\eno Y N$ are independent random variables, denote
by $\mean_X$ (resp.\ $\mean_Y$) the integral wrt the (marginal) law of
$\eno X N$ (resp.\ $\eno Y N$).

With a little abuse of notation, by $x=\argf\min f$ we mean $f(x)=\min
f$ and that the minimizer of $f$ may not be unique.  The same
interpretation applies to $\argf\max$, $\argf\sup$ and $\argf\inf$.

\section{Comparison theorems for multivariate functions}
\label{sec:compare}
Let $N\ge 1$ and $k\ge 1$ be integers.  Denote $V=\Reals^k$ and denote
elements in $V^N$ by $\vf t=(\eno{\vf t} N)$, with $\vf t_i = (t_{i1},
\ldots, t_{ik})\in V$.   In this section, $\rx_h$ and $\rx_{ij}$ will
always denote Rademacher variables.  Furthermore, they are always
assumed to be independent from each other.

\begin{theorem} \label{thm:compare0}
  Let $T\subset V^N$ be a bounded set and $\eno h N$ be functions $V\to
  \Reals$ such that each $h_i$ is $(M_i, \ell_\infty)$-Lipschitz and
  satisfies the vanishing condition that $h_i(\vf t)=0$ if some
  $t_j=0$.  Then, for any function  $\Phi: [0,\infty)\to \Reals$
  convex and nondecreasing,
  \begin{align} \label{eq:compare1}
    \mean\Phi\Grp{\nth 2\sup_{\vf t\in T} \Abs{\sm iN \rx_i h_i(\vf
        t_i)}}
    \le
    \mean\Phi\Grp{\Grp{\sup_{\vf t\in T} \sum_{i,j}
        M_i \rx_{ij} t_{ij}}^+}
    \le
    \mean\Phi\Grp{\sup_{\vf t\in T} \Abs{\sum_{i,j} M_i \rx_{ij}
        t_{ij}}}.
  \end{align}
  Furthermore, for $G: \Reals\to \Reals$ convex and nondecreasing,
  \begin{align} \label{eq:compare2}
    \mean G\Grp{\sup_{\vf t\in T} \sm iN \rx_i h_i(\vf t_i)}
    \le
    \mean G\Grp{\sup_{\vf t\in T} \sum_{i,j} M_i \rx_{ij} t_{ij}}.
  \end{align}
\end{theorem}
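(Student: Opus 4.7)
The rightmost inequality of \eqref{eq:compare1} is immediate from $y^+\le|y|$ and the monotonicity of $\Phi$. For the leftmost inequality, I reduce to \eqref{eq:compare2}. Let $X(\vf t):=\sm i N \rx_i h_i(\vf t_i)$. Since the Rademacher variables are symmetric, $X$ and $-X$ have the same distribution; combined with the identity $\tfrac{1}{2}|X(\vf t)|=\tfrac{1}{2}(X(\vf t)^+ + (-X(\vf t))^+)$ and sup-subadditivity,
\begin{align*}
\tfrac{1}{2}\sup_{\vf t\in T}|X(\vf t)| \le \tfrac{1}{2}\Sbr{\Grp{\sup_{\vf t} X(\vf t)}^+ + \Grp{\sup_{\vf t}(-X(\vf t))}^+}.
\end{align*}
Applying convexity of $\Phi$ to the right-hand side and then using the equality-in-distribution of the two terms yields $\mean\Phi(\tfrac{1}{2}\sup|X|)\le \mean\Phi((\sup X)^+)$. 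Setting $G(y):=\Phi(y^+)$, which is convex and nondecreasing on $\Reals$, the leftmost inequality of \eqref{eq:compare1} follows from \eqref{eq:compare2} applied with this $G$.

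For \eqref{eq:compare2}, I first reduce to $T$ finite by approximating $T$ by an increasing sequence of finite subsets whose union is dense in $T$ and invoking continuity of the $h_i$ together with monotone convergence. I then induct on $N$, peeling off one Rademacher $\rx_i$ at a time by conditioning on the remaining ones. The inductive step reduces to a single-block multivariate contraction lemma, which asserts: for any $(M,\ell_\infty)$-Lipschitz $h:\Reals^k\to\Reals$ that vanishes whenever any coordinate is zero, any bounded $T\subset\Reals^k$, any $\psi:T\to\Reals$, and any convex nondecreasing $G$,
\begin{align*}
\mean_{\rx}\sup_{\vf t\in T} G\Grp{\psi(\vf t)+\rx\, h(\vf t)}
\le \mean_{\rx_1,\ldots,\rx_k}\sup_{\vf t\in T} G\Grp{\psi(\vf t)+M\sm j k \rx_j t_j}.
\end{align*}

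To prove this lemma, I iterate the classical univariate Ledoux--Talagrand contraction along each of the $k$ coordinates. For each $j$ and any fixed values of the other coordinates, the $(M,\ell_\infty)$-Lipschitz condition gives that the restriction $u\mapsto h(t_1,\ldots,t_{j-1},u,t_{j+1},\ldots,t_k)$ is $M$-Lipschitz in $u$, while the strong vanishing condition (applied with the $j$-th coordinate set to $0$) forces this restriction to equal $0$ at $u=0$; these are precisely the hypotheses of the univariate contraction. The step I expect to be hardest is carrying out this iteration rigorously when $T$ is not a product of sets. After one coordinate is peeled, the expression inside the supremum mixes the ``new'' linear term $M\rx_j t_j$ with the ``old'' function $h$ evaluated at the remaining coordinates, and one has to verify that this mixed structure still admits the next contraction. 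The argument will parallel the pairing-and-conditioning proof of the classical one-variable contraction, with the underlying pointwise inequality taking the form $G(A+h(\vf t))+G(B-h(\vf t'))\le \mean_{\rx_j}[G(A+\cdots)+G(B-\cdots)]$ and sustained by the coordinate Lipschitz and vanishing structure of $h$. The $\ell_\infty$ Lipschitz hypothesis (rather than $\ell_2$ or $\ell_1$) is essential here: it yields $M$-Lipschitzness separately in each coordinate, so iterated contraction does not pick up a $k$-dependent loss.
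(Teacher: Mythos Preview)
Your overall architecture matches the paper's: reduce \eqref{eq:compare1} to \eqref{eq:compare2} via symmetry and the substitution $G(y)=\Phi(y^+)$, then prove \eqref{eq:compare2} by induction on $N$, conditioning on all but one Rademacher and invoking a single-block lemma of exactly the form you state. That part is fine.

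The gap is in your proof of the single-block lemma. Iterating the univariate contraction coordinate by coordinate does not work here. When you freeze $t_2,\ldots,t_k$ and regard $u\mapsto h(u,t_2,\ldots,t_k)$ as the contraction, that contraction \emph{depends on the sup-index} $(t_2,\ldots,t_k)$; the classical lemma requires a single contraction. The reason the univariate proof breaks is visible at its key pointwise step: for the maximizers $(a,\vf u)$ of $\psi+h$ and $(b,\vf v)$ of $\psi-h$ one needs $h(\vf u)-h(\vf v)\le M(u_j-v_j)$, but the $(M,\ell_\infty)$-Lipschitz bound only gives $|h(\vf u)-h(\vf v)|\le M\|\vf u-\vf v\|_\infty$, which can exceed $M|u_j-v_j|$ for any prescribed $j$. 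Concretely, take $k=2$, $\psi\equiv 0$, $T=\{(1,1),(1,-1)\}$, and any admissible $h$ with $h(1,1)=1$, $h(1,-1)=-1$ (e.g.\ $h(\vf t)=\mathrm{sign}(t_1t_2)\min(|t_1|,|t_2|)$). Then $\mean_\rx G(\sup_{\vf t}\rx h(\vf t))=G(1)$, while the ``first coordinate peeled'' bound $\mean_{\rx_1}G(\sup_{\vf t}\rx_1 t_1)=\tfrac12(G(1)+G(-1))$ is strictly smaller for any strictly increasing $G$. There is no natural intermediate object of the form ``$M\rx_1 t_1$ plus $h$ on the remaining coordinates'' because the vanishing condition forces $h(0,t_2,\ldots,t_k)=0$; nothing survives to iterate on.

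The paper proves the single-block lemma in \emph{one} step, not $k$. Pick maximizers $(a,\vf u)$ and $(b,\vf v)$ as above, and let $i$ be the coordinate where $|u_i-v_i|=\|\vf u-\vf v\|_\infty$. The vanishing condition yields the stronger pointwise bound $|h(\vf t)|\le M\min_j|t_j|$, so in particular $|h(\vf u)|\le M|u_i|$ and $|h(\vf v)|\le M|v_i|$; together with the Lipschitz bound this gives (after the usual case split on the signs of $u_i,v_i$)
\[
G(a+h(\vf u))+G(b-h(\vf v))\le G(a+Mu_i)+G(b-Mv_i).
\]
The remaining Rademachers $\rx_j$, $j\ne i$, enter only via Jensen's inequality applied to $G$: adding the zero-mean terms $\sum_{j\ne i}M\rx_j u_j$ and $\sum_{j\ne i}M\rx_j v_j$ inside $G$ can only increase the expectation. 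So the strong vanishing hypothesis is not a license to peel coordinates sequentially; its role is to make $|h(\vf t)|\le M|t_i|$ hold for whichever single coordinate $i$ the pair of maximizers selects.
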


The vanishing condition in Theorem \ref{thm:compare0} is satisfied,
for example, by $t_1 f(t_2)$, where $f$ is Lipschitz with $f(0)=0$.
In general, while a function $h$ with $h(\vf 0)=0$ may
not satisfy the condition, it always allows a
decomposition into a sum of functions each satisfying the condition.
For example, if $h$ is defined on $\Reals^2$, then $h(s,t) 
= f(s,t) + h(0,t)+h(s,0)$ with $f(s,t) = h(s,t) - h(0,t)-h(s,0)$,
$h(0,t)$, $h(s,0)$ each satisfying the vanishing condition.  The
decomposition leads to the following result.

\begin{theorem} \label{thm:compare}
  Let $T\subset V^N$ be a bounded set and $\eno h N$ be functions
  $V\to \Reals$ such that each $h_i$ is $(M_i, \ell_\infty)$-Lipschitz
  with $h_i(\vf 0)=0$.  For $j\le k$, let $T_j = \{(t_{1j}, \ldots,
  t_{Nj}): (\eno{\vf t} N)\in T\}\subset 
  \Reals^N$.  Then
  \begin{align} \label{eq:compare4}
    \mean\sup_{\vf t\in T} \Abs{\sm iN \rx_i h_i(\vf t_i)}
    \le
    \beta_k \sm j k \mean\sup_{\vf s\in T_j} \Abs{\sm iN \rx_i M_i
      s_i},
  \end{align}
  where $\beta_k$ is a universal constant that can be set no greater
  than $3^k+3^{k-1}-2^k$.
\end{theorem}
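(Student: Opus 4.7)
The plan is to implement the decomposition suggested in the paragraph preceding the theorem: write each $h_i$ as a sum, indexed by non-empty $J \subset \{1, \ldots, k\}$, of pieces $h_{i,J}$ that depend only on coordinates in $J$ and vanish whenever any one of those coordinates equals zero, so that Theorem \ref{thm:compare0} applied in dimension $|J|$ controls each piece separately. The natural choice is the ANOVA-type formula
\begin{equation*}
h_{i,J}(\vf t) = \sum_{S \subset J} (-1)^{|J|-|S|}\, h_i(\vf t^S),
\end{equation*}
where $\vf t^S$ denotes the vector obtained from $\vf t$ by zeroing every coordinate outside $S$. Three routine checks are needed: $h_{i,J}$ depends only on $\{t_j : j \in J\}$; the alternating sum telescopes to zero whenever $t_j = 0$ for some $j \in J$, by pairing each $S \not\ni j$ with $S \cup \{j\}$; and $\sum_{\emptyset \neq J} h_{i,J} = h_i$ by standard inclusion-exclusion, using $h_i(\vf 0) = 0$.

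Next I would bound the Lipschitz constant of each piece. Since the $S = \emptyset$ term is $\pm h_i(\vf 0) = 0$, and each of the remaining $2^{|J|} - 1$ terms $h_i(\vf t^S)$ is $(M_i, \ell_\infty)$-Lipschitz in $\vf t$,
\begin{equation*}
|h_{i,J}(\vf t) - h_{i,J}(\vf t')| \le (2^{|J|} - 1)\, M_i \normx{\vf t - \vf t'}_\infty,
\end{equation*}
so $h_{i,J}$, viewed as a function on $\Reals^{|J|}$, is $((2^{|J|}-1) M_i, \ell_\infty)$-Lipschitz. Now invoke Theorem \ref{thm:compare0} with $|J|$ in place of $k$ and the projection $T^J$ of $T$ onto the $J$-coordinates in place of $T$, and follow with one triangle inequality to split the resulting double sum over $j \in J$; since the projection onto the $j$-th coordinate recovers $T_j$, this gives
\begin{equation*}
\mean \sup_{\vf t \in T} \Abs{\sm iN \rx_i h_{i,J}(\vf t_i)} \le 2(2^{|J|} - 1) \sum_{j \in J} \mean \sup_{\vf s \in T_j} \Abs{\sm iN M_i \rx_i s_i}.
\end{equation*}

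Summing over $J$ via the triangle inequality applied to $\sum_{\emptyset \neq J} h_{i,J} = h_i$, the total coefficient in front of $\mean \sup_{\vf s \in T_j} \Abs{\sm iN M_i \rx_i s_i}$ for each fixed $j$ becomes
\begin{equation*}
\sum_{J \ni j} 2(2^{|J|} - 1) = 2 \sum_{m=1}^{k} \binom{k-1}{m-1}(2^m - 1) = 4 \cdot 3^{k-1} - 2^k = 3^k + 3^{k-1} - 2^k,
\end{equation*}
by the binomial theorem applied to $(1+2)^{k-1}$ and $(1+1)^{k-1}$. This yields the claimed bound on $\beta_k$. The main subtlety is not combinatorial but in the application of Theorem \ref{thm:compare0}: its vanishing condition refers to \emph{all} coordinates, so $h_{i,J}$ satisfies it only in its effective dimension $|J|$, which is why the theorem must be invoked after projecting $T$ onto the $J$-coordinates; the only other point worth attention is the factor $2^{|J|} - 1$ in the Lipschitz constant, which is what makes the final constant grow like $3^k$ rather than $4^k$.
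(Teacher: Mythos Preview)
Your proposal is correct and is essentially the paper's own proof: the same inclusion--exclusion decomposition $f_{iJ}(\vf t)=\sum_{I\subset J}(-1)^{|J|-|I|}h_i(\pi_I\vf t)$, the same Lipschitz bound $(2^{|J|}-1)M_i$ obtained by dropping the $I=\emptyset$ term, the same application of Theorem~\ref{thm:compare0} in dimension $|J|$ followed by a triangle inequality over $j\in J$, and the same combinatorial sum yielding $3^k+3^{k-1}-2^k$. The paper leaves the final summation as ``simple combinatorial calculation,'' which you have carried out explicitly and correctly.
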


Similar to the univariate results in \cite{ambroladze:07},  Theorem
\ref{thm:compare} remains true if $\rx_i$ are replaced with
independent integrable symmetric variables $\gamma_i$.  Indeed, by
$(\eno \gamma N)\sim (\rx_1|\gamma_1|, \ldots, \rx_N|\gamma_N|)$,
where $\eno\rx N$ are independent from $\gamma_i$, the result follows
by first integrating over $\rx_i$ while conditioning on $|\gamma_i|$,
and then integrating over $|\gamma_i|$.  As can be seen, $\gamma_i$
need not be identically distributed in the argument.

\subsection{Proofs}
\begin{lemma} \label{lemma:compare}
  Let $h: V\to \Reals$ be $(M, \ell_\infty)$-Lipschitz and satisfies
  the condition that $h(\vf t)=0$ if some $t_j=0$.  Suppose $S\subset
  \Reals\times V$ is bounded.  Then for any $G: \Reals\to \Reals$
  convex and nondecreasing,
  \begin{gather} \label{eq:compare0}
    \mean G\Grp{
      \sup_{(x,\vf s)\in S}(x+\rx_0 h(\vf s))
    }
    \le\mean G\Grp{\sup_{(x,\vf s)\in S}\Grp{x+
        M\sm j k \rx_j s_j}},
  \end{gather}
\end{lemma}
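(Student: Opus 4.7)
My plan is to prove Lemma \ref{lemma:compare} by induction on the dimension $k$, with the base case reducing to the classical one-dimensional Ledoux--Talagrand contraction principle. When $k=1$, $h/M$ is 1-Lipschitz with $(h/M)(0)=0$, and the classical principle applied to $h/M$ (followed by a rescaling by $M$) immediately gives
\[
\mean G\!\Grp{\sup_{(x,s)\in S}(x+\rx_0 h(s))}
\le \mean G\!\Grp{\sup_{(x,s)\in S}(x+M\rx_1 s)}.
\]

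For the inductive step in dimension $k\ge 2$, I would first integrate over $\rx_0$ to obtain
\[
\mean_{\rx_0} G\!\Grp{\sup_S(x+\rx_0 h(\vf s))}
= \tfrac{1}{2}\Sbr{G(\sup_S(x+h(\vf s)))+G(\sup_S(x-h(\vf s)))},
\]
with respective maximizers $(x^\pm,\vf s^\pm)\in S$. The key algebraic observation is that the vanishing condition, together with the $(M,\ell_\infty)$-Lipschitz property, yields $|h(\vf s)|\le M|s_j|$ for every coordinate $j$: write $h(\vf s)=h(\vf s)-h(s_1,\ldots,s_{j-1},0,s_{j+1},\ldots,s_k)$, observe that the second term vanishes by hypothesis, and apply the Lipschitz bound; hence $|h(\vf s)|\le M\min_j|s_j|$. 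I would then use this pointwise bound on $|h(\vf s^\pm)|$, together with the convexity and monotonicity of $G$, to dominate each term $G(x^\pm\pm h(\vf s^\pm))$ by a convex combination of the $2^k$ values $G(\sup_S(x+M\sum_j\epsilon_j s_j))$ indexed by $\vf\epsilon\in\{-1,+1\}^k$. Averaging these convex combinations over $\vf\epsilon$ with uniform weights $2^{-k}$ reproduces $\mean G\!\Grp{\sup_S(x+M\sum_{j\le k} \rx_j s_j)}$ on the right, completing the induction.

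The hardest part is justifying the convex-domination step, since the maximizer $(x^\pm,\vf s^\pm)$ depends on the nonlinear $h$ while the target bounds involve the linear functionals $x+M\sum_j\epsilon_j s_j$. The cleanest route seems to be a secondary induction within the inductive step: peel off one coordinate of $\vf s$ at a time by applying a parameterized form of the 1-D contraction principle to the slice $s_j\mapsto h(\vf s)$, treating the remaining coordinates as parameters varying inside the supremum. The vanishing condition guarantees that after each peel the residual function still vanishes on each remaining axis hyperplane, so the inductive hypothesis in dimension $k-1$ continues to apply to the residual. Verifying this parameterized 1-D contraction carefully---namely, that a single-variable Lipschitz function with $\phi(0)=0$ may vary with other supremization parameters without breaking the contraction bound---is the principal subtlety I would need to address, most likely via the usual two-point convexity argument that drives the classical proof, with the additional parameters carried through as inert bookkeeping.
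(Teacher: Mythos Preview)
Your plan has a genuine gap at the ``peel off one coordinate'' step. The parameterized one-dimensional contraction you invoke would require, at the two-point convexity stage, the bound $|h(\vf u)-h(\vf v)|\le M|u_j-v_j|$ for the \emph{fixed} coordinate $j$ being peeled off; but the $(M,\ell_\infty)$-Lipschitz hypothesis only yields $|h(\vf u)-h(\vf v)|\le M\normx{\vf u-\vf v}_\infty$, which can exceed $M|u_j-v_j|$ whenever the two maximizers differ more in some other coordinate. The remaining parameters are therefore not inert bookkeeping---the Lipschitz control is coupled across all $k$ coordinates simultaneously. Furthermore, there is no residual function to which a $(k-1)$-dimensional inductive hypothesis could apply: since $h(s_1,\ldots,s_{k-1},0)=0$ by the vanishing condition, once $\rx_0 h(\vf s)$ has been replaced by $M\rx_k s_k$ nothing of $h$ survives in the lower-dimensional variables. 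Your alternative convex-domination idea has a parallel defect: one can indeed write $h(\vf s^\pm)$ as a convex combination of the values $M\sum_j\epsilon_j s^\pm_j$, but the weights depend on $\vf s^\pm$ and nothing forces them to equal the uniform $2^{-k}$ needed to reproduce the Rademacher average on the right-hand side.

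The paper's argument is not inductive. It takes the two maximizers $(a,\vf u)$ and $(b,\vf v)$ for the two signs of $\rx_0$, \emph{adaptively} selects the single coordinate $i$ at which $\normx{\vf u-\vf v}_\infty$ is attained, and proves directly via a three-case convexity argument (using both the Lipschitz bound and the pointwise bound $|h(\vf t)|\le M\min_j|t_j|$ that you also derived) that $G(a+h(\vf u))+G(b-h(\vf v))\le G(a+Mu_i)+G(b-Mv_i)$. The remaining $k-1$ coordinates are then introduced all at once by Jensen's inequality, since each $\rx_j$ has mean zero and $G$ is convex. The idea missing from your proposal is precisely this adaptive choice of the coordinate; with it the lemma follows in a single step, and no induction on $k$ is needed.
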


\begin{proof}
  First, we notice that 
  \begin{align} \label{eq:contract1}
    |h(\vf t)| \le M \min(|t_1|, \ldots, |t_k|).
  \end{align}
  Indeed, for any $j\le k$, let $\vf s=\pi_{\{1,\ldots,k\}
    \setminus\{j\}} \vf t$, i.e. $\vf s$ has the same coordinates as
  $\vf t$ except the $j$th one being 0.  Then $h(\vf s)=0$, and as $h$
  is $(M,\ell_\infty)$-Lipschitz, $|h(\vf t)| = |h(\vf t)-h(\vf s)|
  \le M\normx{\vf t-\vf s}_\infty = M|t_j|$.

  We shall assume $S$ is compact.  By dominated convergence, the
  assumption causes no loss of generality.  Also, we shall assume
  $M=1$.  Otherwise, we can use change of variables $\vf s' = M\vf s$,
  $h'(\vf s') = h(\vf s'/M)$ to reduce to this case.  Let
  $$
  (a,\vf u) = \argf\sup_{(x,\vf s)\in S} (x+h(\vf s)), \quad
  (b,\vf v) = \argf\sup_{(x,\vf s)\in S} (x-h(\vf s)).
  $$
  Then
  \begin{align*}
    \mean G\Grp{\sup_{(x,\vf s)\in S}(x+\rx_0 h(\vf s))}
    = \nth 2 \Sbr{G\Grp{a + h(\vf u)} + G\Grp{b - h(\vf v)}}.
  \end{align*}
  Assume $|u_i - v_i| = \normx{\vf u - \vf v}_\infty$.   Then, in
  order to show \eqref{eq:compare0}, it suffices to show
  \begin{align} 
    &
    G\Grp{a+ h(\vf u)} + G\Grp{b - h(\vf v)} \nonumber \\
    &
    \qquad\le
    \begin{cases}
      \displaystyle
      \mean\Sbr{
        G\Grp{a + u_i + \sum_{j\not=i} \rx_j u_j}
        + G\Grp{b - v_i + \sum_{j\not=i} \rx_j v_j}
      }
      & \text{if } u_i\ge v_i, \\[4ex]
      \displaystyle
      \mean\Sbr{
        G\Grp{a - u_i + \sum_{j\not=i} \rx_j u_j}
        + G\Grp{b + v_i + \sum_{j\not=i} \rx_j v_j}
      }
      & \text{else.}
    \end{cases}
    \label{eq:conditional}
  \end{align}
  
  Suppose $u_i\ge v_i$.  Since $G$ is convex, by Jensen's inequality, 
  \eqref{eq:conditional} is implied by
  \begin{align} \label{eq:contract-ineq}
    G\Grp{a+ h(\vf u)} + G\Grp{b - h(\vf v)} 
    \le G\Grp{a+ u_i} + G\Grp{b - v_i}.
  \end{align}

  Following \cite{ledoux:91}, the proof of \eqref{eq:contract-ineq} is
  divided into 3 cases.
  
  1) $u_i\ge v_i\ge 0$.  Now \eqref{eq:contract-ineq} is equivalent
  to $G\Grp{b - h(\vf v)} -G\Grp{b - v_i} \le G\Grp{a+ ru_i} -
  G\Grp{a+ h(\vf u)}$, so by the convexity of $G$, we only need to
  show
  \begin{align} \label{eq:contract-ineq2}
    u_i - h(\vf u) \ge v_i - h(\vf v)\ge 0,
    \quad a + h(\vf u) \ge b-v_i.
  \end{align}
  Since $h$ is $(1,\ell_\infty)$-Lipschitz, $h(\vf u)-h(\vf v) \le
  \normx{\vf u- \vf v}_\infty =u_i-v_i$, and so $u_i - h(\vf u) \ge
  v_i - h(\vf v)$.   By \eqref{eq:contract1}, $v_i-h(\vf
  v)\ge v_i-|h(\vf v)| \ge 0$ and together with the definition of $\vf
  u$, $a + h(\vf u) \ge b + h(\vf v) \ge b -
  v_i$.  Thus \eqref{eq:contract-ineq2} follows.
  
  2) $u_i\ge 0 \ge v_i$.  By \eqref{eq:contract1}, $a + u_i
  \ge a + |h(\vf u)| \ge a + h(\vf u)$, $b - v_i =  b + |v_i| \ge b +
  |h(\vf v)| \ge b - h(\vf v)$.  Since $G$ is nondecreasing, then
  \eqref{eq:contract-ineq} holds.

  3) $0\ge u_i\ge v_i$.  It suffices to show $G\Grp{a + h(\vf u)} -
  G\Grp{a + u_i} \le G\Grp{b - v_i} - G\Grp{b - h(\vf v)}$.  The proof
  is completely similar to case 1).  We thus have shown
  \eqref{eq:conditional} for the case $u_i\ge v_i$.  The proof for
  $u_i\le v_i$ is completely similar.  
\end{proof}

\begin{proof}[Proof of Theorem \ref{thm:compare0}]
  First, \eqref{eq:compare1} is a consequence of \eqref{eq:compare2}.
  To see this, let $H_{\vf t} = (h_1(\vf t_1), \ldots, h_N(\vf t_N))$
  and $\vf\rx = (\eno \rx N)$.   Then
  $$
  \sup_{\vf t\in T} \Abs{\Ang{\vf\rx, H_{\vf t}}}
  = \sup_{\vf t\in T} (\Ang{\vf\rx, H_{\vf t}}^+ + 
  \Ang{\vf\rx, H_{\vf t}}^-)
  \le 
  \sup_{\vf t\in T} \Ang{\vf\rx, H_{\vf t}}^+ +\sup_{\vf t\in T}
  \Ang{\vf\rx, H_{\vf t}}^-. 
  $$
  So by the nondecreasing monotonicity and convexity of $\Phi$,
  \begin{align*}
    \mean\Phi\Grp{\nth 2\sup_{\vf t\in T} \Abs{\Ang{\vf\rx, H_{\vf t}}}}
    \le
    \nth 2\Sbr{
      \mean\Phi\Grp{\sup_{\vf t\in T} \Ang{\vf\rx, H_{\vf t}}^+}
      +\mean\Phi\Grp{\sup_{\vf t\in T} \Ang{\vf\rx, H_{\vf t}}^-}
    }.
  \end{align*}
  Since $\vf\rx \sim -\vf\rx$, then $\sup_{\vf t\in T} \Ang{\vf\rx,
    H_{\vf t}}^- \sim \sup_{\vf t\in T} \Ang{-\vf\rx, H_{\vf t}}^-
  =\sup_{\vf t\in T} \Ang{\vf\rx, H_{\vf t}}^+$, which together with
  the previous inequality yields
  $$
  \mean\Phi\Grp{\nth 2\sup_{\vf t\in T} \Abs{\Ang{\vf\rx, H_{\vf t}}}}
  \le
  \mean\Phi\Grp{\sup_{\vf t\in T} \Ang{\vf\rx, H_{\vf t}}^+}
  =
  \mean\Phi\Grp{\Grp{\sup_{\vf t\in T} \Ang{\vf\rx, H_{\vf t}}}^+},
  $$
  where the equality follows from the fact that $\sup_{a\in A} a^+ =
  (\sup_{a\in A} a)^+$ for any $A\subset \Reals$.
  Now use the fact that $G(x)=\Phi(x^+)$ is convex and increasing and
  \eqref{eq:compare2} to get
  \begin{align*}
    \mean\Phi\Grp{\nth 2\sup_{\vf t\in T}
      \Abs{\Ang{\vf\rx, H_{\vf t}}}
    }
    \le
    \mean\Phi\Grp{\Grp{\sup_{\vf t\in T} \sum_{i,j} M_i \rx_{ij}
        t_{ij}}^+}. 
  \end{align*}
  This proves the first inequality in \eqref{eq:compare1}.  By the
  nondecreasing monotonicity of $\Phi$, the second inequality in
  \eqref{eq:compare1} follows.
  
  It remains to show \eqref{eq:compare2},   If $N=1$, then $T\subset
  V$ and by letting $S = \{(0,\vf t): \vf t\in T\}$,
  \eqref{eq:compare2} follows from Lemma \ref{lemma:compare}.
  Suppose $N\ge 2$.   Given $\eno z{N-1}\in \{-1,1\}$, let
  $$
  S = \Cbr{
    \Grp{\sm j{N-1}z_j h_j(\vf t_j),\ \vf t_N}:\ (\eno {\vf t}
    N)\in T
  } \subset \Reals\times V.
  $$
  Then by Lemma \ref{lemma:compare},
  $$
  \mean G\Grp{\sup_{(x, \vf s)\in S}(x+\rx_N h_N(\vf s))}
  \le\mean G\Grp{\sup_{(x, \vf s)\in S}\Grp{
      x+M_N\sm jk \rx_{Nj} s_j
    }
  }.
  $$
  Since $\rx_i$ and $\rx_{ij}$ are independent, this can be written as
  \begin{align*}
    &
    \mean \Sbr{
      G\Grp{\sup_{\vf t\in T} \sm i N \rx_i h_i(\vf t_i)}
      \,\vline\, \rx_i=z_i, \ i\le N-1
    } \\
    &
    \le\mean\Sbr{
      G\Grp{\sup_{\vf t\in T}\Grp{
          \sm i{N-1} \rx_i h_i(\vf t_i) + M_N\sm jk \rx_{Nj} t_{Nj}
        }
      }
      \,\vline\, \rx_i=z_i, \ i\le N-1
    }.
  \end{align*}
  Integrate over $\eno z{N-1}$ to get
  \begin{align*}
    &
    \mean G\Grp{\sup_{\vf t\in T} \sm i N \rx_i h_i(\vf t_i)}
    \le\mean
    G\Grp{\sup_{\vf t\in T}\Grp{
        \sm i{N-1} \rx_i h_i(\vf t_i) + M_N\sm jk \rx_{Nj} t_{Nj}
      }
    }.
  \end{align*}
  Now apply the same argument to the expectation on the right hand
  side, except that we condition on $\rx_i$, $i<N-1$ and $\rx_{Nj}$,
  $j\le k$.  Then the expectation is no greater than
  \begin{align*}
    \mean G\Grp{\sup_{\vf t\in T}\Grp{
        \sm i{N-2} \rx_i h_i(\vf t_i) + \sum_{i=N-1}^N M_i \sm jk
        \rx_{ij} t_{ij} 
      }
    }.
  \end{align*}
  The proof is then finished by induction.
\end{proof}

\begin{proof}[Proof of Theorem \ref{thm:compare}]
  Write $[k]=\{1,\ldots,k\}$ and $\pi_{-j}$ for
  $\pi_{[k]\setminus\{j\}}$.  Then for $i\le N$ and $\vf t\in V$,
  \begin{align} \label{eq:h-decomp}
    h_i(\vf t) = \sum_{J\subset[k]} f_{iJ}(\vf t),
  \end{align}
  where $f_{iJ}(\vf t) = \sum_{I\subset J} (-1)^{|J|-|I|} h_i(\pi_I\vf
  t)$.  Indeed, the right hand side of \eqref{eq:h-decomp} is
  \begin{align*}
    \sum_{J\subset[k]} \sum_{I\subset J} (-1)^{|J|-|I|} h_i(\pi_I\vf
    t)
    &
    = \sum_{I\subset[k]} h_i(\pi_I\vf t) \sum_{I\subset J\subset[k]}
    (-1)^{|J|-|I|} \\
    &
    = \sum_{I\subset[k]} h_i(\pi_I\vf t) (1-1)^{k-|I|} = h_i(\vf t).
  \end{align*}

  Since $h_i(\pi_I\vf t)$ are $(M_i, \ell_\infty)$-Lipschitz and
  $h_i(\pi_\emptyset \vf t) = h_i(\vf0)=0$, for each $J$ with
  $|J|=s>0$, $f_{iJ}$ is $(c_s M_i, \ell_\infty)$-Lipschitz, where
  $c_s = 2^s-1$.  It is easy to see that 
  $f_{iJ}(\vf t)$ only depends on $t_j$ with $j\in J$.  For each 
  $j\in J$, letting $\vf s=\pi_{-j} \vf t$,
  \begin{align*}
    f_{iJ}(\vf s)
    &= \sum_{j\not\in I\subset J} (-1)^{|J|-|I|} h_i(\pi_I \vf s)
    + 
    \sum_{j\not\in I\subset J } (-1)^{|J|-|\{j\}\cup I|}
    h_i(\pi_{\{j\}\cup I}\vf
    s)
    \\
    &= \sum_{j\not\in I\subset J} (-1)^{|J|-|I|} [h_i(\pi_I \vf s)
    -h_i(\pi_{\{j\}\cup I} \vf s)].
  \end{align*}
  For every $I$ not containing $j$, $\pi_I \vf
  s = \pi_{\{j\}\cup I} \vf s = \pi_I \vf t$.   As a result,
  $f_{iJ}(\pi_{-j} \vf t)=0$.  In orther words, as a function only in
  $(t_j, j\in J)$, $f_{iJ}(\vf t)$ vanishes if $t_j=0$ for some $j$.
  Then by Theorem \ref{thm:compare0}
  \begin{align*}
    \mean\sup_{\vf t\in T} \Abs{\sm iN \rx_i f_{iJ}(\vf t_i)}
    \le
    2c_s \mean\sup_{\vf t\in T}
    \Abs{\sm iN M_i\sum_{j\in J} \rx_{ij}t_{ij}}.
  \end{align*}
  Since all $\rx_i$, $\rx_{ij}$ are i.i.d.,
  $$
  \mean\sup_{\vf t\in T}
  \Abs{\sm iN M_i\sum_{j\in J} \rx_{ij} t_{ij}}
  \le \mean\Grp{\sum_{j\in J} \sup_{\vf t\in T}
    \Abs{\sm iN M_i \rx_{ij} t_{ij}}}
  = \sum_{j\in J} \mean\sup_{\vf s\in T_j}
  \Abs{\sm iN \rx_i M_i s_i}.
  $$

  Combining \eqref{eq:h-decomp} and the above bound,
  \begin{align*}
    \mean\Grp{\sup_{\vf t\in T} \Abs{\sm iN \rx_i h_i(\vf t_i)}}
    &
    \le 
    \sum_{J\subset [k]}
    \mean\Grp{\sup_{\vf t\in T} \Abs{\sm iN \rx_i f_{iJ}(\vf t_i)}}\\
    &\le
    2 \sum_{J\subset [k]} (2^{|J|}-1) \sum_{j\in J} 
    \mean\sup_{\vf s\in T_j} \Abs{\sm iN \rx_i M_i s_i}.
  \end{align*}
  By simple combinatorial calculation, the proof is complete.
\end{proof}

\section{Stochastic Lipschitz conditions} \label{sec:stoch-lip}
Let $(Y_1, Z_1)$, \ldots, $(Y_N, Z_N)$ be independent random vectors,
with $Y_i$ taking values in a measurable space $\cY$ and $Z_i$ being
$k\times p$ matrices.  For
$j\le k$, denote by $Z_{ij}\tp$ the $j$th row vector of $Z_i$ and for
$h\le p$, $Z_{ijh}$ the $(j,h)$th entry of $Z_i$.  That is
$$
Z_i = \begin{pmatrix}
  Z_{i1}\tp \\\vdots\\ Z_{ik}\tp
\end{pmatrix}
= \begin{pmatrix}
  Z_{i11} & Z_{i12} & \ldots & Z_{i1p} \\
  \vdots & \vdots & \ddots & \vdots \\
  Z_{ik1} & Z_{ik2} & \ldots & Z_{ikp} 
\end{pmatrix}
$$

Henceforth, we consider the case where $Z_i$ are fixed.  Let $D\subset
\Reals^p$ be a fixed domain.  Then for $i\le N$ and $\vf u\in D$, $Z_i
\vf u\in \Reals^k$.  Define
\begin{align} \label{eq:D-radius}
  M_Z = \mx i N \mx j k \normx{Z_{ij}}_\infty, \quad
  R_D := \sup_{\vf u,\vf v\in D} \normx{\vf u-\vf v}_1.
\end{align}

Suppose $\eno\gamma N$ are real valued functions on $\Reals^k\times
\cY$.  For $j\le k$, denote by $\partial_j$ the first partial
differentiation wrt $t_j$.  We make the following assumption.
\begin{assumption} \label{a:map}
  For all $i\le N$ and $y\in\cY$, $\gamma_i(\vf t,y)$ is first
  order differentiable in $\vf t$, such that
  \begin{align*}
    F_1
    &:
    = \sup\Cbr{
      \Abs{\partial_j \gamma_i(\vf s,y)}:\,
      \vf s\in\Reals^k, \, y\in \cY,\, j\le k,\, i\le N
    }<\infty, \\
    F_2
    &
    := \sup\Cbr{
      \frac{
        \Abs{\partial_j\gamma_i(\vf s,y) - \partial_j\gamma_i(\vf t,y)}
      }{
        \normx{\vf t-\vf s}_\infty
      }:\,\vf s, \vf t\in\Reals^k,\,
      \vf s\not=\vf t,\, y\in \cY,\, j\le k,\, i\le N
    }<\infty.
  \end{align*}
\end{assumption}

\begin{theorem}[Local stochastic Lipschitz continuity]
  \label{thm:local-lip}
  Let $\vtheta\in D$ be fixed.  Under Assumption \ref{a:map},
  for $\vf u\in D$,
  \begin{align} \label{eq:taylor-loc-lip}
    \sm i N \dev{-.35ex}{\gamma_i(Z_i \vf u, Y_i)}
    = \sm i N \dev{-.35ex}{\gamma_i(Z_i\vtheta, Y_i)}
    + 
    \sum_{i,j} \dev{-.35ex}{\partial_j\gamma_i(Z_i\vtheta, Y_i)}
    Z_{ij}\tp(\vf u-\vf \theta) + \xi(\vf u)\tp(\vf u-\vf \theta),
  \end{align}
  where $\xi(\vf u)\in\Reals^p$ is a process with the property that
  for any $q\in (0,1)$,
  \begin{align} 
    \prob\Biggl\{\sup_{\vf u\in D} \normx{\xi(\vf u)}_\infty
    &
    >
    A\sqrt{\ln(2p)\sm j k \mx h p\sm i N Z_{ijh}^2}
    \nonumber\\
    &\qquad
    + B \sqrt{\ln(p/q)\mx h p\sum_{i,j}Z_{ijh}^2}
    + C \ln(p/q)\Biggr\}
    \le q,  \label{eq:loc-lip-tail}
  \end{align}
  where, letting
  \begin{align} \label{eq:phi-psi-loc}
    \phi = M_Z \min(2F_1, F_2 M_Z R_D), \quad
    \psi = k \beta_k M_Z F_2,
  \end{align}
  with $\beta_k$ a universal constant as in Theorem \ref{thm:compare},
  $A=4\sqrt{2 k} R_D\psi$, $B=\sqrt{2k} \phi$, $C=8k \phi$.

  Furthermore, given $q_0\in (0,1)$, for any $q, q'\in (0,1)$
  satisfying $q+q'=q_0$, w.p.\ at least $1-q_0$,
  \begin{align} 
    &\hspace{-2em}
    \sup_{\vf u \in D\setminus\{\vf\theta\}}
    \nth{\normx{\vf u -\vf\theta}_1}
    \Abs{
      \sm i N \dev{-.35ex}{\gamma_i(Z_i \vf u, Y_i)}
      -\sm i N \dev{-.35ex}{\gamma_i(Z_i\vtheta, Y_i)}
    }
    \nonumber \\
    &\hspace{1em}
    \le \sqrt{2k} F_1 \sqrt{\ln(2p/q') \mx h p \sum_{i,j} Z_{ijh}^2}
    + A\sqrt{\ln(2p)\sm j k \mx h p\sm i N Z_{ijh}^2}
    \nonumber\\
    &\quad\hspace{3em}
    + B \sqrt{\ln(p/q)\mx h p\sum_{i,j} Z_{ijh}^2} + C \ln(p/q).
    \label{eq:loc-lip}
  \end{align}
\end{theorem}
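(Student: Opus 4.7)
I would establish \eqref{eq:taylor-loc-lip} by a first-order Taylor expansion of $\gamma_i(\cdot, Y_i)$ about $Z_i\vtheta$. Setting
$$
R_i(\vf u) := \gamma_i(Z_i\vf u, Y_i) - \gamma_i(Z_i\vtheta, Y_i) - \sum_{j=1}^k \partial_j\gamma_i(Z_i\vtheta, Y_i)\,Z_{ij}\tp(\vf u-\vtheta),
$$
summing over $i$ and centering yields the identity with $\xi(\vf u)\tp(\vf u-\vtheta)=\sum_i\dev{-.35ex}{R_i(\vf u)}$. Taking $\xi(\vf u)$ to be the scalar multiple of $\sign(\vf u-\vtheta)$ compatible with this constraint (the $\ell_\infty$-norm minimiser, by $\ell_1$--$\ell_\infty$ duality) gives $\normx{\xi(\vf u)}_\infty=|\sum_i\dev{-.35ex}{R_i(\vf u)}|/\normx{\vf u-\vtheta}_1$, so \eqref{eq:loc-lip-tail} reduces to a tail bound on this scalar ratio, uniformly in $\vf u\in D\setminus\{\vtheta\}$.

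To bound this supremum, introduce the auxiliary functions $h_i(\vf t) := \gamma_i(\vf t+Z_i\vtheta, Y_i) - \gamma_i(Z_i\vtheta, Y_i) - \sum_j\partial_j\gamma_i(Z_i\vtheta, Y_i)t_j$, for which $h_i(\vf 0)=0$ and, by Assumption \ref{a:map}, $|\partial_j h_i(\vf t)|\le\min(2F_1, F_2\normx{\vf t}_\infty)$. On $\{\normx{\vf t}_\infty\le M_Z r\}$ they are therefore $(\phi_r, \ell_\infty)$-Lipschitz with $\phi_r := \min(2F_1, F_2 M_Z r)$. Fix $r\in(0, R_D]$ and set $D_r := \{\vf u\in D : \normx{\vf u-\vtheta}_1\le r\}$ and $T := \{(Z_1(\vf u-\vtheta),\ldots,Z_N(\vf u-\vtheta)) : \vf u\in D_r\}$. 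After symmetrisation (at a factor $2$), Theorem \ref{thm:compare} applied to $h_i$ and $T$ gives
$$
\mean\sup_{\vf u\in D_r}\Abs{\sum_i\dev{-.35ex}{R_i(\vf u)}}\;\le\;2\beta_k\phi_r\sum_{j=1}^k\mean\sup_{\vf u\in D_r}\Abs{\sum_{i=1}^N\rx_i Z_{ij}\tp(\vf u-\vtheta)}.
$$
H\"older bounds the inner supremum by $r\max_h|\sum_i\rx_i Z_{ijh}|$; the sub-Gaussian maximal inequality yields $\mean\max_h|\sum_i\rx_i Z_{ijh}|\le\sqrt{2\ln(2p)\max_h\sum_i Z_{ijh}^2}$; Cauchy--Schwarz over $j$ then produces the $A$-term after dividing by $r$.

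To upgrade this to the tail \eqref{eq:loc-lip-tail}, I would apply Talagrand's concentration inequality (cf.\ \cite{ledoux:01}) to the centred process divided by $\normx{\vf u-\vtheta}_1$, with uniform envelope $\phi=M_Z\min(2F_1, F_2 M_Z R_D)$ and weak variance $\max_h\sum_{i,j}Z_{ijh}^2$, yielding the $B$- and $C$-terms; a dyadic peeling over $r\in(0, R_D]$ glues the shell-wise bounds at a cost absorbed into $\ln(p/q)$. For \eqref{eq:loc-lip}, the linear piece in \eqref{eq:taylor-loc-lip} is controlled in $\ell_1$--$\ell_\infty$ duality by $\max_h|\sum_{i,j}\dev{-.35ex}{\partial_j\gamma_i(Z_i\vtheta, Y_i)}Z_{ijh}|$; since $|\partial_j\gamma_i|\le F_1$, Hoeffding plus a union bound over the $2p$ signed coordinates and a Cauchy--Schwarz over the $k$ directions deliver the $\sqrt{2k}F_1\sqrt{\ln(2p/q')\max_h\sum_{i,j}Z_{ijh}^2}$ bound with probability $\ge 1-q'$; a final union bound with $q+q'=q_0$ yields \eqref{eq:loc-lip}.

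The main obstacle is the peeling: because $\sum_i\dev{-.35ex}{R_i(\vf u)}$ vanishes quadratically in $\vf u-\vtheta$ while we divide by $\normx{\vf u-\vtheta}_1$, the sharp shell-wise Lipschitz constant $\phi_r$ is proportional to $r$ for small $r$ and saturates at $2F_1$ for large $r$, and the two regimes must be correctly interpolated without losing the envelope $\phi$ in the Talagrand term. That only $h_i(\vf 0)=0$ holds (rather than the coordinate-wise vanishing demanded by Theorem \ref{thm:compare0}) is also why Theorem \ref{thm:compare} is the appropriate tool here; its combinatorial prefactor $\beta_k\le 3^k+3^{k-1}-2^k$ is exactly what has been absorbed into $\psi=k\beta_k M_Z F_2$.
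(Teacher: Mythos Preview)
Your overall architecture (Taylor expansion, symmetrisation, Theorem~\ref{thm:compare}, then concentration for the remainder; Hoeffding plus a union bound for the linear part) matches the paper's. Where you diverge is in the construction of $\xi(\vf u)$, and that divergence creates the very obstacle you flag.

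The paper does \emph{not} define $\xi(\vf u)$ by $\ell_1$--$\ell_\infty$ duality. Instead it writes the Taylor remainder itself in linear form: with $\vf t_i=Z_i(\vf u-\vtheta)$ and explicit divided-difference functions $\varphi_{ij}$ (satisfying $\varphi_{ij}(\vf 0)=0$, $|\varphi_{ij}|\le\min(2F_1,F_2 M_Z R_D)$, and $(F_2,\ell_\infty)$-Lipschitz), one has
\[
\gamma_i(Z_i\vf u,Y_i)-\gamma_i(Z_i\vtheta,Y_i)=\sum_{j\le k}\bigl(\partial_j f_i(Z_i\vtheta)+\varphi_{ij}(\vf t_i)\bigr)\,Z_{ij}^\top(\vf u-\vtheta),
\]
so that $\xi_h(\vf u)=\sum_{i,j}\dev{-.35ex}{\varphi_{ij}(\vf t_i)}Z_{ijh}$ is a genuine $h$-th coordinate. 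One then bounds $W_h=\sup_{\vf u\in D}|\xi_h(\vf u)|$ for each $h$ separately by Talagrand plus symmetrisation, applies Theorem~\ref{thm:compare} to $\tilde\varphi_i(\vf s)=\sum_j\varphi_{ij}(\vf s)Z_{ijh}$ (which is $(kM_ZF_2,\ell_\infty)$-Lipschitz; this is where $\psi=k\beta_k M_Z F_2$ comes from), and finishes with a union bound over $h\le p$. No peeling is needed anywhere: your ``main obstacle'' simply does not arise.

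The coordinate-wise construction is also what delivers the variance term in the theorem. For fixed $h$, $\sum_i\var(\xi_{ih}(\vf u))\le k\sum_{i,j}\mean\bigl(\varphi_{ij}(\vf t_i)Z_{ijh}\bigr)^2$, which is bounded by a multiple of $\sum_{i,j}Z_{ijh}^2$ and hence by $\max_h\sum_{i,j}Z_{ijh}^2$. In your setup the $i$-th summand is $\dev{-.35ex}{R_i(\vf u)}/\normx{\vf u-\vtheta}_1$, and the best pointwise bound is $|R_i(\vf u)|/\normx{\vf u-\vtheta}_1\le\max_h\bigl|\sum_j\varphi_{ij}(\vf t_i)Z_{ijh}\bigr|$; summing the squares over $i$ yields $\sum_i\max_h(\cdot)^2$, which does \emph{not} dominate $\max_h\sum_i(\cdot)^2$. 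So the weak-variance bound $\max_h\sum_{i,j}Z_{ijh}^2$ you assert for Talagrand is not justified by your construction, and the stated $B$-term does not follow from your plan. The fix is exactly the paper's move: factor the remainder linearly in $\vf u-\vtheta$ before taking norms, and work coordinate by coordinate.
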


\begin{theorem}[Stochastic Lipschitz continuity]
  \label{thm:global-lip}
  Fix an arbitrary $\vtheta\in D$.  Under Assumption
  \ref{a:map}, for $\vf u$ and $\vf v\in D$,
  \begin{align} \label{eq:taylor-glob-lip}
    \sm i N \dev{-.35ex}{\gamma_i(Z_i \vf u, Y_i)}-
    \sm i N \dev{-.35ex}{\gamma_i(Z_i\vf v, Y_i)}
    =
    \sum_{i,j} \dev{-.35ex}{\partial_j\gamma_i(Z_i\vtheta, Y_i)}
    Z_{ij}\tp(\vf u - \vf v)  + \xi(\vf u, \vf v)\tp (\vf u-\vf v),
  \end{align}
  where $\xi(\vf u, \vf v)\in \Reals^p$ is a process with the
  property that for any $q\in (0,1)$,
  \begin{align} 
    \prob\Bigg\{
    \sup_{\vf u, \vf v\in D} \normx{\xi(\vf u, \vf v)}_\infty 
    &>
    \bar A\sqrt{\ln(2p)\sm jk \mx h p \sm i N Z_{ijh}^2} 
    \nonumber\\
    &\qquad
    + \bar B \sqrt{\ln(p/q)\mx h p \sum_{i,j}
      Z_{ijh}^2} + \bar C \ln(p/q)
    \Bigg\} \le q, \label{eq:glob-lip-tail}
  \end{align}
  where, letting
  \begin{align} \label{eq:phi-psi-glob}
    \bar \phi = 2 M_Z \min(F_1, F_2 M_Z R_D), \quad
    \bar \psi = 2k \beta_{2k} M_Z F_2,
  \end{align}
  with $\beta_{2k}$ the universal constant as in Theorem
  \ref{thm:compare}, $\bar A=4\sqrt{2 k} R_D\bar \psi$, $\bar
  B=\sqrt{2k}\bar \phi$, $\bar C=8k \bar \phi$.

  Furthermore, given $q_0\in (0,1)$, for any $q, q'\in (0,1)$ with
  $q+q'=q_0$, w.p.~at least $1-q_0$,
  \begin{align} 
    &\hspace{-2em}
    \sup_{\vf u\not=\vf v \in D}
    \nth{\normx{\vf u -\vf v}_1}
    \Abs{
      \sm i N \dev{-.35ex}{\gamma_i(Z_i \vf u, Y_i)}
      -\sm i N \dev{-.35ex}{\gamma_i(Z_i\vf v, Y_i)}
    }
    \nonumber\\
    &\hspace{1em}
    \le \sqrt{2k} F_1 \sqrt{\ln(2p/q') \mx h p \sum_{i,j} Z_{ijh}^2}
    + \bar A\sqrt{\ln(2p)\sm j k \mx h p\sm i N Z_{ijh}^2}
    \nonumber\\
    &\hspace{3em}
    + \bar B \sqrt{\ln(p/q)\mx h p\sum_{i,j} Z_{ijh}^2} +
    \bar C \ln(p/q).
    \label{eq:glob-lip}
  \end{align}
\end{theorem}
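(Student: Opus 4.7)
The plan is to parallel the proof of Theorem \ref{thm:local-lip}, now indexing by $(\vf u,\vf v)\in D\times D$. The Taylor decomposition \eqref{eq:taylor-glob-lip} comes from the integral mean-value identity
$$
\gamma_i(Z_i\vf u,Y_i)-\gamma_i(Z_i\vf v,Y_i)=\int_0^1\sum_j\partial_j\gamma_i(Z_i(\vf v+s(\vf u-\vf v)),Y_i)\,Z_{ij}\tp(\vf u-\vf v)\,ds.
$$
Adding and subtracting $\sum_j\partial_j\gamma_i(Z_i\vtheta,Y_i)Z_{ij}\tp(\vf u-\vf v)$ splits the right side into the target linear term plus a remainder of the form $\eta_i(\vf u,\vf v)\tp(\vf u-\vf v)$ for an explicit $\eta_i\in\Reals^p$. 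Centering and summing over $i$ yields \eqref{eq:taylor-glob-lip} with $\xi(\vf u,\vf v)=\sm i N\dev{-.35ex}{\eta_i(\vf u,\vf v)}$.

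The crux is the tail bound \eqref{eq:glob-lip-tail} on $\sup_{\vf u,\vf v}\normx{\xi(\vf u,\vf v)}_\infty$. For each coordinate $h\le p$, I would first symmetrize to reduce $\sup|\xi_h|$ to an expectation of a Rademacher process whose $i$th summand is a $Y_i$-conditional function of $(Z_i\vf u,Z_i\vf v)\in\Reals^{2k}$; after the routine translation that forces vanishing at the origin, Theorem \ref{thm:compare} applied with $2k$ coordinates dominates this Rademacher process by a linear one of the form $\sm i N\sum_{\ell=1}^{2k}M_i\rx_{i\ell}\cdot(\text{coordinate})$. This is exactly where the factor $\beta_{2k}$ and the multiplier $2k$ in $\bar\psi$ enter. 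A bounded-differences (Talagrand/Bousquet) concentration inequality applied to this linear Rademacher process, together with a union bound over $h\le p$, then delivers the three-term right-hand side of \eqref{eq:glob-lip-tail}.

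For \eqref{eq:glob-lip}, I would combine the tail bound just derived (at level $q$) with a separate Hoeffding bound on the linear contribution $\mx h p|\sum_{i,j}\dev{-.35ex}{\partial_j\gamma_i(Z_i\vtheta,Y_i)}Z_{ijh}|$ (at level $q'$), using $|\partial_j\gamma_i|\le F_1$ from Assumption \ref{a:map} together with a union bound over $2p$ coordinate-sign pairs. Applying H\"older's inequality to bound the linear term by $\normx{\vf u-\vf v}_1$ times that maximum, dividing \eqref{eq:taylor-glob-lip} through by $\normx{\vf u-\vf v}_1$, and taking $q+q'=q_0$ produces \eqref{eq:glob-lip}.

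The main obstacle will be verifying the $(M,\ell_\infty)$-Lipschitz constant entering the contraction step. The integrand that must be contracted is essentially $[\partial_j\gamma_i(Z_i\vtheta+\vf w)-\partial_j\gamma_i(Z_i\vtheta,Y_i)]\cdot w'_j$, with $\vf w,\vf w'$ derived from the $2k$-dimensional argument; the first factor admits the two bounds $2F_1$ and $F_2\normx{\vf w}_\infty\le F_2 M_Z R_D$, while the second is controlled by $M_Z R_D$. Taking the smaller of $2F_1$ and $F_2 M_Z R_D$ in the product yields the Lipschitz constant $\bar\phi=2M_Z\min(F_1,F_2M_ZR_D)$ used in $\bar B$ and $\bar C$. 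The factor $2$ and the use of $\beta_{2k}$ in place of $\phi$ and $\beta_k$ of the local version both reflect the doubled effective dimension arising from letting $\vf v$ vary rather than be pinned at $\vtheta$.
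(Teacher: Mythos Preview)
Your overall strategy is the paper's: decompose into a linear part plus a remainder, control the remainder coordinate-wise via concentration together with symmetrization and the multivariate comparison Theorem~\ref{thm:compare} in $2k$ variables, and handle the linear part by Hoeffding plus a union bound. The paper parametrizes the remainder slightly differently---via coordinate-wise telescoping with the maps $\bar\pi_j$, writing $\varphi_{ij}(\vf s,\vf t)$ with $\vf s=Z_i(\vf u-\vtheta)$ and $\vf t=Z_i(\vf v-\vf u)$ (Lemma~\ref{lemma:phi-glob})---but your integral form is equally legitimate and in fact yields a slightly smaller Lipschitz constant.

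Two points in your outline need repair. First, the concentration inequality is applied to $W_h=\sup_{\vf u,\vf v\in D}|\xi_h(\vf u,\vf v)|$ itself (via Lemma~\ref{lemma:mc}), not to the linear Rademacher process that emerges after comparison. Symmetrization and Theorem~\ref{thm:compare} only bound $\mean W_h$; they do not transfer a tail bound from the linear process back to $W_h$. The three terms on the right of \eqref{eq:glob-lip-tail} arise as follows: the $\bar A$-term is the bound on $2\mean W_h$, while the $\bar B$- and $\bar C$-terms are the variance and sup-norm contributions from Talagrand applied directly to $W_h$.

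Second, your last paragraph conflates two distinct constants serving two distinct roles. What you compute there---$\min(2F_1,F_2M_ZR_D)$ times $M_Z$---is the \emph{uniform bound} on each summand $\varphi_{ij}\cdot Z_{ijh}$; this is what feeds into the $M_0$ and $S_0$ inputs of the concentration inequality and produces $\bar\phi$, hence $\bar B$ and $\bar C$. The $(M,\ell_\infty)$-Lipschitz constant needed for the contraction step is a separate computation: you must bound how $(\vf a,\vf b)\mapsto\sum_j\varphi_{ij}(\vf a,\vf b)Z_{ijh}$ varies on $\Reals^{2k}$, which involves only $F_2$ (through the Lipschitz property of $\partial_j\gamma_i$), not $F_1$. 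That constant is what gives $\bar\psi$ and hence $\bar A$.
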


\subsection{Preliminaries}

We shall repeatedly use several fundamental results in probability.
First, the following lemma is a combination of the measure
concentration results in \cite{klein:05:ap,massart:00:ap} tailored for
our needs.
\begin{lemma} \label{lemma:mc}
  Suppose $f_1(\vf u), \ldots, f_N(\vf u)\in \Reals$ are independent
  stochastic processes indexed by $\vf u\in D$, where $D\subset
  \Reals^p$ is a measurable set, such that w.p.~1, each $f_i$ has a
  continuous path.  Suppose there are $a_i\le b_i$, $i\le N$, such
  that w.p.~1, $a_i\le f_i(\vf u) \le b_i$ for all $i\le N$ and $\vf
  u\in D$.  Let 
  $$
  W = \sup_{\vf u\in D} \Abs{\sm i N f_i(\vf u)}.
  $$
  Then for any $s>0$,
  \begin{align} \label{eq:f-hoeff}
    \prob\Cbr{W\ge \mean W + \sqrt{2s\sm i N (b_i-a_i)^2}}
    \le e^{-s},
  \end{align}
  Furthermore, assume $\mean f_i(\vf u)=0$ for all $i\le N$ and
  $\vf u\in D$.  Let $M>0$ such that w.p.~1, $|f_i(\vf u)|\le M$, for
  all $i\le N$ and $\vf u\in D$, and let $S>0$ such that 
  $\sm i N \var(f_i(\vf u)) \le S^2$ for all $\vf u\in D$.  Then for
  any $s>0$,
  \begin{align} \label{eq:mc}
    \prob\Cbr{W \ge 2 \mean W + S \sqrt{2s} + 4 M s} \le e^{-s}.
  \end{align}
\end{lemma}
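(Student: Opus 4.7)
The plan is to deduce both inequalities from standard concentration tools for suprema of sums of independent processes; the probabilistic content is entirely inherited from \cite{klein:05:ap, massart:00:ap}, and the work is mostly bookkeeping.

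For \eqref{eq:f-hoeff}, I would apply McDiarmid's bounded-differences inequality, viewing $W$ as a measurable function of the independent random elements driving $\eno f N$. If $f_i$ is replaced by an independent copy $f_i'$ (both with sample paths in $[a_i,b_i]$), then for every $\vf u\in D$ the sum $\sm jN f_j(\vf u)$ shifts by at most $|f_i(\vf u)-f_i'(\vf u)|\le b_i-a_i$, so the absolute-value supremum $W$ shifts by at most $b_i-a_i$ as well. McDiarmid then yields $\prob\Cbr{W\ge \mean W+t}\le \exp(-2t^2/\sm iN(b_i-a_i)^2)$, and picking $t=\sqrt{2s\sm iN(b_i-a_i)^2}$ produces a bound of $e^{-4s}\le e^{-s}$, which is \eqref{eq:f-hoeff}.

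For \eqref{eq:mc}, the natural tool is Talagrand's concentration inequality for empirical processes in the one-sided Bousquet--Klein--Rio form provided by \cite{klein:05:ap, massart:00:ap}. Under the hypotheses $\mean f_i(\vf u)=0$, $|f_i(\vf u)|\le M$, and $\sm iN \var(f_i(\vf u))\le S^2$ uniformly in $\vf u\in D$, that inequality applied to $W^+=\sup_{\vf u\in D}\sm iN f_i(\vf u)$ gives $\prob\Cbr{W^+\ge \mean W^++\sqrt{2(S^2+2M\mean W^+)s}+cMs}\le e^{-s}$ for a universal $c$, and the same bound holds for $W^-=\sup_{\vf u\in D}\sm iN(-f_i(\vf u))$. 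Using $\mean W^\pm\le \mean W$, the subadditivity $\sqrt{a+b}\le\sqrt a+\sqrt b$, and the AM--GM linearization $2\sqrt{M\mean W\cdot s}\le \mean W+Ms$ absorbs the cross term, bounding the threshold by $2\mean W+S\sqrt{2s}+4Ms$. A union bound over the two signs, absorbed by an innocuous inflation of $s$ by a constant, then produces \eqref{eq:mc}.

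The only real obstacle is constant-tracking: one must locate the statement of Bousquet/Klein--Rio whose right-hand side best matches the symbolic shape above and verify that the slack from the AM--GM step and the two-sign union bound fits within the stated coefficients $\sqrt{2s}$ and $4Ms$. There is no new probabilistic content beyond what the cited papers already provide.
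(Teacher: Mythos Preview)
Your approach to \eqref{eq:f-hoeff} via McDiarmid is correct and in fact gives a sharper constant ($e^{-4s}$) than needed; the paper instead invokes Massart's functional Hoeffding inequality (Theorem~9 of \cite{massart:00:ap}) after rewriting $W$ as a supremum over the doubled index set $T=D\times\{\pm1\}$, which yields exactly $\exp(-x^2/2L^2)$. Either route works here.

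For \eqref{eq:mc} there is a small but real discrepancy. Your plan---bound $W^+$ and $W^-$ separately and union-bound---produces $\prob\{W\ge 2\mean W+S\sqrt{2s}+4Ms\}\le 2e^{-s}$, not $e^{-s}$, and ``inflating $s$ by a constant'' changes the threshold (to $S\sqrt{2(s+\ln2)}+4M(s+\ln2)$) rather than recovering \eqref{eq:mc} as stated. The paper avoids the union bound entirely: it again passes to the doubled index set $T=D\times\{\pm1\}$, so that $W=\sup_{t\in T}\sm iN X_{i,t}$ with $X_{i,(\vf u,\sigma)}=\sigma f_i(\vf u)$ is already a one-sided supremum of centered, bounded summands with the \emph{same} variance proxy $S^2$ and uniform bound $M$. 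A single application of Klein--Rio (Theorem~1.1 of \cite{klein:05:ap}) to $\tilde W=W/M$ gives $\prob\{\tilde W\ge\mean\tilde W+a\}\le\exp(-a^2/(2w+3a))$ with $w=2\mean\tilde W+(S/M)^2$; solving $a^2/(2w+3a)=s$ and linearizing via $\sqrt{x+y}\le\sqrt x+\sqrt y$ and $2\sqrt{xy}\le x+y$ yields $a\le\mean\tilde W+(S/M)\sqrt{2s}+4s$, which is exactly \eqref{eq:mc}. The doubled-index device is the trick you are missing; once you use it, no union bound is needed and the constants fall out cleanly. (The paper also handles uncountable $D$ by first taking $D$ finite and then passing to the limit by monotone convergence using path continuity; your McDiarmid argument implicitly relies on the same separability.)
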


Next, we need the following comparison inequality involving univariate
functions (cf.\ \cite{ledoux:91}, Theorem 4.12; \cite{vandegeer:08}).
\begin{lemma} \label{lemma:u-compare}
  Let $D\subset\Reals^p$ be a measurable set and $\eno\gamma N$ be
  continuous functions from $D$ to $\Reals$.  Suppose $\eno f N$ are
  continuous functions $\Reals\to \Reals$ that map 0 to 0 and are all
  $M$-Lipschitz for some $M>0$.  If $\eno\rx N$ are i.i.d.\ Rademacher
  variables, then 
  $$
  \mean \sup_{\vf u\in D} \Abs{\sm i N \rx_i f_i(\gamma_i(\vf u))}
  \le 2M\mean \sup_{\vf u\in D} \Abs{\sm i N \rx_i \gamma_i(\vf u)}.
  $$
\end{lemma}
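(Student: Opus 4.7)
The plan is to reduce Lemma \ref{lemma:u-compare} to the univariate case ($k=1$) of Theorem \ref{thm:compare0}, which has already been proved. The composed processes $f_i\circ\gamma_i$ fit naturally into the framework of that theorem once we re-parametrize by $\vf u\in D$.

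Concretely, I would take $V=\Reals$ in the setup of Section \ref{sec:compare}, set $h_i=f_i$, and let
$$
T=\{(\gamma_1(\vf u),\ldots,\gamma_N(\vf u)):\vf u\in D\}\subset\Reals^N.
$$
Under $k=1$ the $(M,\ell_\infty)$-Lipschitz condition on each $h_i$ becomes the usual $M$-Lipschitz condition on $\Reals$, and the vanishing condition ``$h_i(\vf t)=0$ if some $t_j=0$'' reduces to $f_i(0)=0$; both are given. Applying the second inequality in \eqref{eq:compare1} with the convex and nondecreasing function $\Phi(x)=x$ then yields
$$
\nth 2\,\mean\sup_{\vf t\in T}\Abs{\sm iN \rx_i f_i(t_i)}
\le M\,\mean\sup_{\vf t\in T}\Abs{\sm iN \rx_{i1} t_i}.
$$
Since $(\rx_{i1})_{i\le N}$ and $(\rx_i)_{i\le N}$ are both i.i.d.\ Rademacher sequences, the right-hand side equals $M\,\mean\sup_{\vf t\in T}\Abs{\sm iN \rx_i t_i}$. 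Re-expressing $T$ in terms of $\vf u\in D$ and multiplying by $2$ yields the claim.

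The main technical wrinkle is that Theorem \ref{thm:compare0} requires $T$ to be bounded, whereas Lemma \ref{lemma:u-compare} imposes no such restriction on $D$ or on $\gamma_i(D)$. I would address this by exhausting $D$ with an increasing sequence of subsets $D_r\subset D$ on which every $\gamma_i$ is bounded (using continuity of $\gamma_i$, e.g.\ intersecting $D$ with closed balls of radius $r$ on which the $\sup|\gamma_i|$ are finite), applying the argument on each $D_r$, and then passing to the limit by monotone convergence; the asserted inequality is trivial if the right-hand side is infinite. Measurability of the suprema is guaranteed by continuity of $\gamma_i$ and $f_i$, so there is no need to invoke separability arguments.
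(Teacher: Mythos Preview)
Your proposal is correct. The paper does not give its own proof of this lemma but simply cites it as the classical contraction principle (Ledoux--Talagrand, Theorem~4.12; van~de~Geer~2008). Your route---specializing Theorem~\ref{thm:compare0} to $k=1$ with $\Phi(x)=x$---is valid and is essentially the same argument, since the proof of Theorem~\ref{thm:compare0} (via Lemma~\ref{lemma:compare}) is itself modeled on the Ledoux--Talagrand proof of the univariate contraction principle; you are just invoking the multivariate generalization and then collapsing it back to $k=1$. One small remark on your exhaustion step: intersecting $D$ with closed balls need not make the $\gamma_i$ bounded, since $D$ is only assumed measurable; it is cleaner to take $D_r=\{\vf u\in D:\max_i|\gamma_i(\vf u)|\le r\}$ directly, which are measurable by continuity and yield bounded $T_r$, and then apply monotone convergence as you indicate.
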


The continuity assumption in the above two lemmas is used to ensure
measurability and is satisfied in the situations we shall consider.
Inequality \eqref{eq:f-hoeff} is referred to as functional Hoeffding
inequality in \cite{massart:00:ap}.  The proof of Lemma \ref{lemma:mc}
is given in Appendix.  Finally, we shall also repeatedly use the
following inequality (cf. \cite{massart:00}, Lemma 5.2)
\begin{lemma} \label{lemma:massart}
  Let $\eno\rx N$ be i.i.d.\ Rademacher variables and $A\subset
  \Reals^p$ a finite set.  Let $A_1 = \{\vf a, -\vf a: \vf a\in A\}$.
  Then
  $$
  \mean \max_{\vf a\in A} \Abs{\sm i N \rx_i a_i}
  \le \max_{\vf a\in A} \normx{\vf a}_2 \times \sqrt{2 \ln |A_1|}
  \le \max_{\vf a\in A} \normx{\vf a}_2 \times \sqrt{2 \ln (2|A|)}.
  $$
\end{lemma}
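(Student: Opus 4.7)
The plan is to use the standard Cramér--Chernoff (exponential moment) method combined with the union bound. First, I would reduce the two-sided maximum to a one-sided one by observing that $\max_{\vf a\in A}\Abs{\sm i N \rx_i a_i}=\max_{\vf a\in A_1}\sm i N \rx_i a_i$, since for every $\vf a\in A$ the set $A_1$ contains both $\vf a$ and $-\vf a$, and $\rx_i$ are $\pm 1$-valued. This is the only symmetrization needed; afterwards the whole argument is about a one-sided maximum over the enlarged index set $A_1$.

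The second step is to apply Jensen's inequality to $\exp(\lambda\cdot)$ for an arbitrary $\lambda>0$:
\begin{align*}
\exp\Grp{\lambda\,\mean\max_{\vf a\in A_1}\sm i N \rx_i a_i}
\le \mean \exp\Grp{\lambda \max_{\vf a\in A_1}\sm iN \rx_i a_i}
\le \sum_{\vf a\in A_1}\mean\exp\Grp{\lambda\sm iN \rx_i a_i}.
\end{align*}
For each fixed $\vf a\in A_1$, by independence of the $\rx_i$ the moment generating function factors, and the elementary bound $\mean e^{\lambda\rx_i a_i}=\cosh(\lambda a_i)\le \exp(\lambda^2 a_i^2/2)$ yields $\mean\exp(\lambda\sm iN \rx_i a_i)\le \exp(\lambda^2\normx{\vf a}_2^2/2)$. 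Setting $R=\max_{\vf a\in A}\normx{\vf a}_2=\max_{\vf a\in A_1}\normx{\vf a}_2$ and combining,
\begin{align*}
\lambda\,\mean\max_{\vf a\in A_1}\sm iN \rx_i a_i \le \ln|A_1|+\frac{\lambda^2 R^2}{2}.
\end{align*}

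Finally I would optimize over $\lambda>0$. Dividing by $\lambda$ and taking $\lambda=\sqrt{2\ln|A_1|}/R$ (assuming $R>0$; the case $R=0$ is trivial) produces $\mean\max_{\vf a\in A_1}\sm iN \rx_i a_i\le R\sqrt{2\ln|A_1|}$, which is the first claimed inequality. The second inequality is then immediate from $|A_1|\le 2|A|$ and monotonicity of $\ln$. There is no real obstacle here; the only mildly delicate point is the Rademacher MGF bound $\cosh x\le e^{x^2/2}$, which follows by comparing Taylor coefficients term by term ($(2k)!\ge 2^k k!$).
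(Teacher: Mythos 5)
Your proof is correct, and it is the standard Cram\'er--Chernoff argument for Massart's finite class lemma. The paper itself does not prove this lemma but simply cites it (Massart, 2000, Lemma~5.2); your argument is precisely the usual derivation found there and in the literature: symmetrize the absolute value by passing to $A_1$, bound $\exp(\lambda\,\mean\max)$ by Jensen plus a sum of moment generating functions, use $\cosh x\le e^{x^2/2}$ coordinatewise, and optimize over $\lambda$. All steps, including the degenerate case $R=0$ (equivalently $A=\{\vf 0\}$), check out.
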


\subsection{Proof of local stochastic Lipschitz continuity}
We next prove Theorem \ref{thm:local-lip}.  Denote $\vf c_i = Z_i
\vtheta$ for 
$i\le N$ and $\vf c=(\eno{\vf c} N)$.  For $\vf u\in D$, denote 
$$
\vf t_i = Z_i(\vf u-\vtheta) = Z_i\vf u - \vf c_i\in
\Reals^k
$$
and $\vf t = (\eno{\vf t} N)$.  Note that $\vf t_i$ are functions
only in $\vf u$.  For each $i\le N$, denote
$$
f_i(\cdot) = \gamma_i(\cdot, Y_i).
$$
For $j\le k$, denote by $\bar\pi_j$ the map $(\eno x k)\to (\eno x j,
0, \ldots, 0)$.  It is easy to check that, for every $i\le N$,
\begin{align} 
  \gamma_i(Z_i\vf u, Y_i) - \gamma_i(Z_i\vtheta, Y_i)
  &
  =f_i(\vf c_i+\vf t_i) - f_i(\vf c_i)
  \nonumber\\
  &= \sm j k \Grp{\partial_j f_i(\vf c_i) + \varphi_{ij}(\vf
    t_i)}t_{ij},
  \label{eq:decomp}
\end{align}
where, for $\vf s\in \Reals^k$,
$$
\varphi_{ij}(\vf s)
= 
\begin{cases}
  \displaystyle
  \frac{f_i(\vf c_i+\bar\pi_j \vf s)
    - f_i(\vf c_i+\bar\pi_{j-1}\vf s)}{s_j}- 
  \partial_j f_i(\vf c_i)
  & \text{if}\ s_j\not=0 \\[2ex]
  \partial_j f_i(\vf c_i+\bar\pi_{j-1} \vf s) - \partial_j f_i(\vf
  c_i), & \text{if}\ s_j=0.
\end{cases}
$$
Thus $\varphi_{ij}$ is a function $\Reals^k\to \Reals$.  We need some
basic properties of $\varphi_{ij}$.  Recall that $F_1$ and $F_2$ are
defined in Assumption \ref{a:map}.
\begin{lemma} \label{lemma:phi}
  W.p.~1, for all $i\le N$ and $j\le k$, $\varphi_{ij}(\vf
  0)=0$, $|\varphi_{ij}(\cdot)|\le 2F_1$ uniformly,
  and $\varphi_{ij}$ is $(F_2, \ell_\infty)$-Lipschitz on $\Reals^k$.
  Furthermore, for $\vf u\in D$, $|\varphi_{ij}(Z_i(\vf u-\vf
  \theta))|\le F_2 M_Z R_D$.
\end{lemma}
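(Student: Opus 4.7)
\textbf{Proof plan for Lemma \ref{lemma:phi}.}
The plan is to first consolidate the piecewise definition of $\varphi_{ij}$ into a single integral representation, then read off all four conclusions from that representation. Let $e_j$ denote the $j$-th standard basis vector of $\Reals^k$. By the fundamental theorem of calculus, for $s_j\neq 0$,
\begin{align*}
\frac{f_i(\vf c_i+\bar\pi_j\vf s)-f_i(\vf c_i+\bar\pi_{j-1}\vf s)}{s_j}
= \int_0^1 \partial_j f_i(\vf c_i+\bar\pi_{j-1}\vf s+\alpha s_j e_j)\,\dd\alpha,
\end{align*}
so that
\begin{align*}
\varphi_{ij}(\vf s) = \int_0^1\Sbr{\partial_j f_i(\vf c_i+\bar\pi_{j-1}\vf s+\alpha s_j e_j) - \partial_j f_i(\vf c_i)}\,\dd\alpha.
\end{align*}
When $s_j=0$ this integrand is independent of $\alpha$ and the right-hand side reduces to $\partial_j f_i(\vf c_i+\bar\pi_{j-1}\vf s)-\partial_j f_i(\vf c_i)$, matching the definition. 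Thus the formula holds for all $\vf s\in\Reals^k$ and is valid almost surely under Assumption \ref{a:map}, since the assumption holds for every $y\in\cY$.

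From this representation, the first two conclusions are immediate. Substituting $\vf s=\vf 0$ gives $\bar\pi_{j-1}\vf 0=\vf 0$ and $s_j e_j=\vf 0$, so the integrand vanishes identically and $\varphi_{ij}(\vf 0)=0$. The uniform bound $|\varphi_{ij}(\cdot)|\le 2F_1$ follows by bounding each $\partial_j f_i$ by $F_1$ inside the integral.

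For the Lipschitz property, given $\vf s,\vf t\in\Reals^k$, I would subtract the two integral representations and apply the $(F_2,\ell_\infty)$-Lipschitz property of $\partial_j f_i$ (which is part of Assumption \ref{a:map}) pointwise in $\alpha$. The relevant displacement is $\bar\pi_{j-1}(\vf s-\vf t)+\alpha(s_j-t_j)e_j$; its coordinates equal $s_h-t_h$ for $h<j$, equal $\alpha(s_j-t_j)$ for $h=j$ with $\alpha\in[0,1]$, and $0$ otherwise. Hence its $\ell_\infty$ norm is bounded by $\Norm{\vf s-\vf t}_\infty$, yielding $|\varphi_{ij}(\vf s)-\varphi_{ij}(\vf t)|\le F_2\Norm{\vf s-\vf t}_\infty$ after taking the integral. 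The mild technical point is verifying that the $\alpha$-rescaling of the $j$-th coordinate does not enlarge the $\ell_\infty$ norm; this is where choosing $\ell_\infty$ rather than $\ell_2$ or $\ell_1$ pays off, and is the only step requiring any care.

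Finally, combining $\varphi_{ij}(\vf 0)=0$ with the Lipschitz bound gives $|\varphi_{ij}(\vf s)|\le F_2\Norm{\vf s}_\infty$ for every $\vf s$. Applying this at $\vf s=Z_i(\vf u-\vtheta)$ and estimating coordinate-by-coordinate via Hölder,
\begin{align*}
\Norm{Z_i(\vf u-\vtheta)}_\infty
= \mx j k \Abs{Z_{ij}\tp(\vf u-\vtheta)}
\le \mx j k \Norm{Z_{ij}}_\infty \Norm{\vf u-\vtheta}_1
\le M_Z R_D,
\end{align*}
by the definitions in \eqref{eq:D-radius}, which yields the last claim. No step requires more than Assumption \ref{a:map} and the definitions already in place; there is no real obstacle beyond the bookkeeping of the piecewise formula, which the integral representation dissolves.
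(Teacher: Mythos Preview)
Your proposal is correct and follows essentially the same route as the paper: both pass through the integral representation $\varphi_{ij}(\vf s)=\int_0^1[\partial_j f_i(\vf c_i+\bar\pi_{j-1}\vf s+\alpha s_j e_j)-\partial_j f_i(\vf c_i)]\,\dd\alpha$, read off $\varphi_{ij}(\vf 0)=0$ and the $2F_1$ bound directly, obtain the $(F_2,\ell_\infty)$-Lipschitz estimate by subtracting and bounding the displacement in $\ell_\infty$, and deduce the final bound from $\|Z_i(\vf u-\vtheta)\|_\infty\le M_Z R_D$. The only differences are cosmetic (your $\alpha$ versus the paper's $u$, and your slightly more explicit H\"older step at the end).
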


From the decomposition \eqref{eq:decomp} and $t_{ij} = Z_{ij}\tp
(\vf u-\vf \theta)\in\Reals$, 
\begin{align*}
  \sm i N (\gamma_i(Z_i\vf u, Y_i) - \gamma_i(Z_i\vtheta, Y_i))
  =
  \sm iN\sm jk\Grp{\partial_j f_i(\vf c_i) + \varphi_{ij}(\vf t_i)}
  Z_{ij}\tp(\vf u-\vtheta),
\end{align*}
giving
\begin{align*}
  \sm i N \dev{-.35ex}{
    \gamma_i(Z_i\vf u, Y_i) - \gamma_i(Z_i\vf c, Y_i)
  }
  = \sum_{i,j}\dev{-.35ex}{\partial_j f_i(\vf c_i)} Z_{ij}\tp(\vf u -
  \vtheta)
  + \sum_{i,j} \dev{-.35ex}{\varphi_{ij}(\vf t_i)} Z_{ij}\tp(\vf u -
  \vtheta).
\end{align*}
Recall $\vf t_i = Z_i (\vf u-\vtheta)$.  Define for $i\le N$ and $h\le
p$
\begin{align*}
  \xi_{ih}(\vf u) = \sm j k\dev{-.38ex}{\varphi_{ij}(\vf t_i)}
  Z_{ijh},
  \quad 
  \xi_h(\vf u) = \sm i N \xi_{ih}(\vf u),
  \quad 
  W_h = \sup_{\vf u \in D} \Abs{\sm iN\xi_{ih}(\vf u)}.
\end{align*}
Then, letting $\xi(\vf u) = (\xi_1(\vf u), \ldots, \xi_p(\vf u))$, it
is seen \eqref{eq:taylor-loc-lip} holds and
\begin{align} \label{eq:Wh}
  \normx{\xi(\vf u)}_\infty = \mx h p|\xi_h(\vf u)| \le \mx h p W_h.
\end{align}

Given $h$, consider the upper tail of $W_h$.  For $i\le N$ and $j\le
k$, by Lemma~\ref{lemma:phi}, $\Abs{\varphi_{ij}(\vf t_i) Z_{ijh}} \le
\phi$, where $\phi=M_Z\min(2F_1, F_2 M_Z R_D)$ as in
\eqref{eq:phi-psi-loc}.  Then
\begin{align} \label{eq:uniform-bound}
  \Abs{\xi_{ih}(\vf u)} \le 
  \sm j k |\dev{-.38ex}{\varphi_{ij}(\vf t_i)} Z_{ijh}| 
  \le 2k\phi := M_0.
\end{align}
Given $\vf u\in D$, for each $i\le N$, $\xi_{ih}(\vf u)$ is a function
only in $Y_i$.  Therefore, by independence and $\var(\sm jk \nu_j) \le
k\sm j k\var(\nu_j) \le k \sm j k \mean \nu_j^2$ for any random
variables $\eno \nu k\in\Reals$,
\begin{align}
  \var\Grp{\xi_h(\vf u)}
  &
  = 
  \sm i N \var\Grp{\xi_{ih}(\vf u)}
  \le
  k \sum_{i,j}\mean\Grp{\varphi_{ij}(\vf t_i) Z_{ijh}}^2
  \le k \phi^2 \sum_{i,j}Z_{ijh}^2 \le S_0^2,
  \label{eq:var-bound}
\end{align}
where
\begin{align} \label{eq:S0}
  S_0^2 = k \phi \mx h p \sum_{i,j} Z_{ijh}^2.
\end{align}

From \eqref{eq:uniform-bound}, \eqref{eq:S0} and Lemma \ref{lemma:mc},
it follows that
\begin{align} \label{eq:Wh-tail}
  \Pr\Cbr{W_h > 2\mean W_h + S_0\sqrt{2s} + 4M_0 s}\le e^{-s}.
\end{align}
Since $\mean \xi_{ih}(\vf u)=0$, by symmetrization (cf.\ the comment
after Lemma 6.3 in \cite{ledoux:91}) and a simple dominated
convergence argument
\begin{align*}
  \mean W_h 
  &= \mean \sup_{\vf u\in D}
  \Abs{\sm i N
    \Sbr{\hspace{-1ex}
      \Sbr{\sm j k \varphi_{ij}(\vf t_i) Z_{ijh}}\hspace{-1ex}
    }
  } \\
  &\le
  2 \mean \sup_{\vf u\in D} \Abs{
    \sm i N \rx_i \sm j k \varphi_{ij}(\vf t_i) Z_{ijh}
  }
  =
  2 \mean \sup_{\vf t\in T} \Abs{
    \sm i N \rx_i \sm j k \varphi_{ij}(\vf t_i) Z_{ijh}
  },
\end{align*}
where $T = \Cbr{(\eno{\vf t} N): \vf t_i = Z_i \vf (\vf
  u-\vtheta), i\le N}$ and $\eno \rx N$ are i.i.d.\ Rademacher
variables independent of $\eno Y N$.  Given $\eno Y N$, by Lemma
\ref{lemma:phi}, each
$$
\tilde\varphi_i(\vf s) = \sm j k \varphi_{ij}(\vf s) Z_{ijh}
$$
is $(kM_Z F_2, \ell_\infty)$-Lipschitz mapping $\vf 0$ to 0.  Note for 
$j\le k$, $\{(t_{1j}, \ldots, t_{Nj}): (\eno{\vf t}N)\in T\}
= \{(Z_{ij}\tp (\vf u-\vtheta), \ldots, Z_{ij}\tp (\vf u - \vtheta)):
\vf u\in D\}$.  Then by Theorem \ref{thm:compare} and the independence
between $\eno Y N$ and $\eno\rx N$, letting $\psi = k \beta_k M_Z F_2$
as in \eqref{eq:phi-psi-loc},
\begin{align*}
  &\hspace{-2em}
  \mean \sup_{\vf t\in T} \Abs{
    \sm i N \rx_i \sm j k \varphi_{ij}(\vf t_i) Z_{ijh}
  }
  =
  \mean_Y
  \mean_\rx \sup_{\vf t\in T} \Abs{
    \sm i N \rx_i \tilde\varphi_i(\vf t_i)
  }
  \\
  &\hspace{1em}\le 
  \mean_Y \Grp{
    \psi \sm j k \mean_\rx\sup_{\vf t\in T}
    \Abs{\sm i N t_{ij}}
  } =
  \psi\sm j k \mean_Y \mean_\rx\sup_{\vf u\in D}
  \Abs{\sm i N \rx_i Z_{ij}\tp (\vf u-\vtheta)}\\
  &\hspace{5.1cm}\le 
  R_D \psi \sm j k \mean_Y\mean_\rx \mx h p \Abs{\sm i N \rx_i
    Z_{ijh}},
\end{align*}
where the last inequality is due to
$$
\Abs{\sm i N \rx_i Z_{ij}\tp (\vf u - \vtheta)}
\le \normx{\vf u -\vtheta}_1 \mx h p \Abs{\sm i N \rx_i Z_{ijh}}
\le R_D \mx h p \Abs{\sm i N \rx_i Z_{ijh}}
$$
for $j\le k$ and $\vf u\in D$.  By Lemma \ref{lemma:massart}, for each
$j\le k$,
$$
\mean_\rx \mx h p \Abs{\sm i N \rx_i Z_{ijh}}
\le \sqrt{2\ln(2p)} \sqrt{\mx h p \sm i N Z_{ijh}^2}.
$$
The right hand side is independent of the values of $\eno YN$.  We
thus get
\begin{align}
  \mean W_h
  &
  \le 2 R_D\psi\sm jk \mean_Y
  \mean_\rx \mx h p \Abs{\sm iN \rx_i Z_{ijh}} \nonumber\\
  &
  \le
  2R_D\psi \sqrt{2\ln(2p)}
  \sm j k\sqrt{\mx h p \sm i N Z_{ijh}^2} \nonumber\\
  &
  \le 2 R_D\psi \sqrt{2k\ln(2p)}
  \sqrt{\sm j k\mx h p \sm i N Z_{ijh}^2},
  \label{eq:mean-Wh}
\end{align}
where the last inequality is due to Cauchy-Schwartz inequality.  Then
by \eqref{eq:Wh-tail},
\begin{align*}
  \Pr\Cbr{W_h > 4 R_D\psi \sqrt{2k\ln(2p)}
    \sqrt{\sm j k \mx h p \sm i N Z_{ijh}^2}
    + S_0\sqrt{2s} + 4M_0 s}\le e^{-s},
\end{align*}
with $M_0$ and $S_0$ being defined in \eqref{eq:uniform-bound} and
\eqref{eq:S0}.  Let $s = \ln (p/q)$ in the above inequality and sum
over $h\le p$.  By \eqref{eq:Wh} and union-sum inequality,
\eqref{eq:loc-lip-tail} is proved.

To prove \eqref{eq:loc-lip}, by \eqref{eq:taylor-loc-lip} and the
above discussion,
\begin{align*}
  &\hspace{-1cm}
  \sup_{\vf u \in D\setminus\{\vf\theta\}}
  \nth{\normx{\vf u -\vf\theta}_1}
  \Abs{
    \sm i N \dev{-.35ex}{\gamma_i(Z_i \vf u, Y_i)}
    -\sm i N \dev{-.35ex}{\gamma_i(Z_i\vtheta, Y_i)}
  } \\
  &\hspace{2cm}
  \le
  \mx h p \Abs{\sum_{i,j} \dev{-.35ex}{\partial_j\gamma_i(Z_i\vtheta, Y_i)}
    Z_{ijh}} + \mx h p W_h.
\end{align*}
Because of \eqref{eq:loc-lip-tail}, it is enough to show that
\begin{align} \label{eq:loc-theta}
  \prob\Cbr{
    \mx h p \Abs{\sm i N \sm jk \dev{-.35ex}{\partial_j\gamma_i(Z_i\vtheta,
        Y_i)}Z_{ijh}} \ge
    \sqrt{2k} F_1 \sqrt{\ln(2p/q') \mx h p \sum_{i,j} Z_{ijh}^2}
  } \le q'.
\end{align}
Given $h\le p$, $\sm jk \dev{-.35ex}{\partial_j\gamma_i(Z_i\vtheta,
  Y_i)}Z_{ijh}$, $i\le N$, are independent of each other, each having
mean 0 and falling between
$$
\pm F_1 \sm j k |Z_{ijh}| 
-\sm j k \mean[\partial_j\gamma_i(Z_i\vtheta, Y_i)] Z_{ijh}.
$$
Therefore, by Hoeffding inequality (\cite{pollard:84}, p.~191) for
any $s>0$,
\begin{align*}
  &\hspace{-.5cm}
  \prob\Cbr{
    \Abs{\sm i N \sm jk \dev{-.35ex}{\partial_j\gamma_i(Z_i\vtheta,
        Y_i)}Z_{ijh}} \ge
    s
  }
  \le 2\exp\Cbr{
    -\frac{s^2}{2F_1^2\sm i N (\sm j k |Z_{ijh}|)^2}
  } \\
  &\hspace{2cm}
  \le 2\exp\Cbr{
    -\frac{s^2}{2k F_1^2\sm i N \sm j k Z_{ijh}^2}
  }
  \le 2\exp\Cbr{
    -\frac{s^2}{2k F_1^2\mx h p \sum_{i,j} Z_{ijh}^2}
  }.
\end{align*}
Let $s=\sqrt{2k} F_1 \sqrt{\ln(2p/q') \mx h p \sum_{i,j}
  Z_{ijh}^2}$.  Then by the union-sum inequality, \eqref{eq:loc-theta}
follows.

\subsection{Proof of stochastic Lipschitz continuity}
We next prove Theorem \ref{thm:global-lip}.  Since the proof follows
that for the local continuity, we shall only highlight differences in
the proof.  Denote $\vf c=(\eno{\vf c} N)$, with $\vf c_i = Z_i\vf
\theta$.  For any $\vf u$ and $\vf v\in D$, denote $\vf s = (\eno {\vf
  s} N)$, $\vf t=(\eno {\vf t} N)$, with $\vf s_i = Z_i (\vf u-\vf
c)$, $\vf t_i = Z_i(\vf v-\vf u)$.  Then $\vf c_i$, $\vf s_i$, $\vf
t_i\in \Reals^k$.  It is important to note that unlike $\vtheta$, both
$\vf u$ and $\vf v$ are variables.  Again, denote $f_i(\cdot) =
\gamma_i(\cdot, Y_i)$ and $\bar\pi_j$ the map $(\eno x k)\to (\eno x
j, 0, \ldots, 0)$.  Then it is easy to check 
\begin{align} 
  \gamma_i(Z_i\vf u, Y_i) - \gamma_i(Z_i\vf v, Y_i)
  &
  =f_i(\vf c_i+\vf s_i+\vf t_i) - f_i(\vf c_i+\vf s_i)
  \nonumber\\
  &
  = \sm j k \Grp{\partial_j f_i(\vf c_i) + \varphi_{ij}(\vf s_i, \vf
    t_i)} t_{ij},
  \label{eq:decomp-glob}
\end{align}
where for $\vf s$, $\vf t\in \Reals^k$,
$$
\varphi_{ij}(\vf s, \vf t)
= 
\begin{cases}
  \displaystyle
  \frac{f_i(\vf c_i+\vf s+\bar\pi_j\vf t)
    - f_i(\vf c_i+\vf s+\bar\pi_{j-1}\vf t)
  } {t_j} - \partial_j f_i(\vf c_i)
  & \text{if}\ t_j\not=0 \\[2ex]
  \partial_j f_i(c_i+\vf s+\bar\pi_{j-1}\vf t)
  - \partial_j f_i(\vf c_i),
  & \text{if}\ t_j=0.
\end{cases}
$$
The function $\varphi_{ij}(\vf s, \vf t)$ has $2k$ real valued
variates, $\eno s k$, $\eno t k$.
\begin{lemma} \label{lemma:phi-glob}
  For all $i\le N$ and $j\le k$, $\varphi_{ij}(\vf0,\vf0)=0$,
  $|\varphi_{ij}(\cdot, \cdot)|\le 2F_1$ uniformly, and $\varphi_{ij}$
  is $(2F_2, \ell_\infty)$-Lipschitz.  Furthermore, for $\vf u$
  and $\vf v\in D$, $|\varphi_{ij}(Z_i(\vf u -\vtheta), Z_i(\vf 
  v - \vf u))|\le 2 F_2 M_Z R_D$.
\end{lemma}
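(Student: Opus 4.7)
The plan is to mirror the proof of Lemma \ref{lemma:phi} (the local version), with the key new ingredient being the handling of the extra variable $\vf s$ which shares the same coordinates as $\bar\pi_j \vf t$ inside the argument of $f_i$.

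The starting point is the unified integral representation
\begin{align*}
  \varphi_{ij}(\vf s,\vf t)
  = \int_0^1 \partial_j f_i\bigl(\vf c_i + \vf s + \bar\pi_{j-1}\vf t + \tau t_j e_j\bigr)\, d\tau - \partial_j f_i(\vf c_i),
\end{align*}
where $e_j$ is the $j$th standard basis vector. When $t_j\not=0$ this follows from the fundamental theorem of calculus applied along the segment from $\vf c_i+\vf s+\bar\pi_{j-1}\vf t$ to $\vf c_i+\vf s+\bar\pi_j\vf t$ (which differ only in the $j$th coordinate by $t_j$); when $t_j=0$ the integrand is constant and the formula reduces to the second case in the definition. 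Once this representation is in place, $\varphi_{ij}(\vf 0,\vf 0)=0$ is immediate, and the uniform bound $|\varphi_{ij}|\le 2F_1$ follows from $|\partial_j f_i|\le F_1$ under Assumption \ref{a:map}.

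The main obstacle is the Lipschitz claim, specifically tracking the factor $2$ in $(2F_2,\ell_\infty)$. Applying $F_2$-Lipschitzness of $\partial_j f_i$ to the integrand gives, for $(\vf s,\vf t),(\vf s',\vf t')\in\Reals^{2k}$,
\begin{align*}
  |\varphi_{ij}(\vf s,\vf t)-\varphi_{ij}(\vf s',\vf t')|
  \le F_2 \sup_{\tau\in[0,1]} \normx{(\vf s-\vf s') + \bar\pi_{j-1}(\vf t-\vf t') + \tau(t_j-t_j')e_j}_\infty.
\end{align*}
I would then examine the supremum coordinatewise: for $l>j$ the $l$th coordinate is $s_l-s_l'$, for $l<j$ it is $(s_l-s_l')+(t_l-t_l')$, and for $l=j$ it is $(s_j-s_j')+\tau(t_j-t_j')$. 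In each case the absolute value is at most $\normx{\vf s-\vf s'}_\infty+\normx{\vf t-\vf t'}_\infty\le 2\normx{(\vf s,\vf t)-(\vf s',\vf t')}_\infty$, yielding the claimed Lipschitz constant $2F_2$. The overlap between $\vf s$ and $\bar\pi_{j-1}\vf t$ on coordinates $l<j$ is exactly what forces the factor 2, analogous to how the local case in Lemma \ref{lemma:phi} obtained $F_2$ (no overlap, since only $\vf t_i$ was present).

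For the last statement, specialize to $\vf s=Z_i(\vf u-\vtheta)$, $\vf t=Z_i(\vf v-\vf u)$. Then $\normx{\vf s}_\infty\le M_Z\normx{\vf u-\vtheta}_1\le M_Z R_D$, and similarly $\normx{\vf t}_\infty\le M_Z R_D$, by the definitions of $M_Z$ and $R_D$ in \eqref{eq:D-radius}. Using the integral representation with the Lipschitz bound on $\partial_j f_i$,
\begin{align*}
  |\varphi_{ij}(\vf s,\vf t)| \le F_2 \sup_{\tau\in[0,1]} \normx{\vf s + \bar\pi_{j-1}\vf t + \tau t_j e_j}_\infty \le F_2(\normx{\vf s}_\infty + \normx{\vf t}_\infty) \le 2F_2 M_Z R_D,
\end{align*}
completing the lemma. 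Essentially all work reduces to the integral representation plus the combinatorial observation on the $\ell_\infty$ norm; the only nontrivial point is the appearance of the factor 2 (as opposed to 1 in the local case).
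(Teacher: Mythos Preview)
Your proof is correct and follows exactly the approach the paper intends: the paper simply states that the proof is similar to that of Lemma \ref{lemma:phi}, and your argument is precisely the natural adaptation of that proof via the same integral representation, with the factor $2$ arising (as you correctly identify) from the overlap of $\vf s$ and $\bar\pi_{j-1}\vf t$ in the first $j-1$ coordinates.
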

From decomposition \eqref{eq:decomp-glob}, it follows that
\begin{align} \label{eq:dev-glob}
  \sm i N \dev{-.35ex}{
    \gamma_i(Z_i\vf v, Y_i) - \gamma_i(Z_i\vf u, Y_i)
  }
  =
  \sum_{i,j}\dev{-.35ex}{\partial_j f_i(\vf c_i)}
  Z_{ij}\tp(\vf v-\vf u)
  +
  \sum_{i,j}\dev{-.35ex}{\varphi_{ij}(\vf s_i, \vf t_i)}
  Z_{ij}\tp(\vf v-\vf
  u).
\end{align}
Define for $i\le N$ and $h\le p$
\begin{gather*}
  \xi_{ih}(\vf u, \vf v) = \sm j k
  \dev{-.35ex}{\varphi_{ij}(\vf s_i, \vf t_i)} Z_{ijh},\quad
  \xi_h(\vf u, \vf v) = \sm i N \xi_{ih}(\vf u, \vf v), \quad
  W_h = \sup_{\vf u, \vf v\in D} \Abs{\sm iN \xi_{ih}(\vf u, \vf v)}.
\end{gather*}
Then, letting $\xi(\vf u, \vf v) = (\xi_1(\vf u, \vf v), \ldots,
\xi_p(\vf u, \vf v))$, it is seen \eqref{eq:taylor-glob-lip} holds and
\begin{align} \label{eq:Wh-glob}
  \normx{\xi(\vf u, \vf v)}_\infty = \mx h p|\xi_h(\vf u, \vf v)| \le
  \mx h p W_h.
\end{align}

Fix $h$.  For $i\le N$ and $j\le k$, by Lemma \ref{lemma:phi-glob},
letting $\bar\phi = 2 M_Z \min(F_1, F_2M_Z R_D)$ as in
\eqref{eq:phi-psi-glob},
$$
|\varphi_{ij}(\vf s_i, \vf t_i) Z_{ijh}|\le \bar\phi.
$$
Define $\bar M_0$ and $\bar S_0$ in a similar way as
\eqref{eq:uniform-bound} and \eqref{eq:S0}, except that they are in 
terms of $\bar\phi$ instead of $\phi$.  Then, as in
\eqref{eq:Wh-tail},
\begin{align} \label{eq:Wh-tail-glob}
  \Pr\Cbr{W_h > 2\mean W_h + \bar S_0\sqrt{2s} + 4
    \bar M_0 s}\le e^{-s}.
\end{align}
Notice that given $\eno Y N$, each
$$
\tilde\varphi_i(\vf s, \vf t) = \sm j k \varphi_{ij}(\vf s, \vf t)
Z_{ijh}
$$
is $(2kM_Z F_2, \ell_\infty)$-Lipschitz on $\Reals^k\times \Reals^k$
mapping $(\vf0, \vf0)$ to 0.  Then, following the derivation of
\eqref{eq:mean-Wh},
\begin{align} \label{eq:mean-Wh-glob}
  \mean W_h \le 2 R_D\bar\psi \sqrt{2k\ln(2p)}
  \sqrt{\sm j k\mx h p \sm i N Z_{ijh}^2},
\end{align}
where $\bar\psi = 2k\beta_{2k} M_Z F_2$ as in
\eqref{eq:phi-psi-glob}.  The proof of \eqref{eq:glob-lip-tail} can
then be finished in a similar way as \eqref{eq:loc-lip-tail}.  The
proof of \eqref{eq:glob-lip} is completely similar to
\eqref{eq:loc-lip}. 

\section{Lasso for multiple linear combinations of covariates}
\label{sec:reg}
Suppose $\eno\gamma N$ are measurable functions from
$\Reals^k\times \cY$ to $\Reals$.  Let $\eno X N\in V:=\Reals^m$ be
fixed covariate vectors and denote by $\eno {\vf u} N\in V$
parameters.  In this section, we specialize to the following
multivariate loss functions
\begin{align} \label{eq:mv-loss}
  \gamma_i(X_i\tp \vf u_1, \ldots, X_i\tp\vf u_k, Y_i) = \gamma_i(Z_i\vf
  u, Y_i), \quad i\le N,
\end{align}
where 
\begin{align} \label{eq:regress-Z}
  Z_i = \begin{pmatrix}
    X_i\tp & & \\ &\ddots & \\ & & X_i\tp
  \end{pmatrix} \in \Reals^{p\times k}, \quad
  \vf u = \begin{pmatrix}
    \vf u_1 \\ \vdots \\ \vf u_k
  \end{pmatrix}\in\Reals^p, \quad\text{with }\ p=km.
\end{align}
We assume that the form of $\gamma_i$ is already known and consider
the estimation of $\eno{\vf u} k$.

Corresponding to the loss functions $\gamma_i$, the total expected
loss is
\begin{align} \label{eq:mean-total-loss}
  L(\vf u)
  =
  \sm i N \mean\gamma_i(Z_i \vf u, Y_i).
\end{align}
Let $D\subset \Reals^p$ be a compact domain.  Suppose
\begin{align} \label{eq:lasso-target}
  \vtheta=(\eno{\vtheta} k) = \argf\min_{\vf u \in D}  L(\vf u).
\end{align}
The Lasso estimator for $\vtheta$ is of the form 
\begin{align} \label{eq:lasso-form}
  \est\vtheta = 
  (\eno{\est\vtheta} k)
  &
  = \argf\min_{\vf u \in D} \Cbr{
    \sm i N \gamma_i(X_i\tp\vf u_1, \ldots, X_i\tp\vf u_k, Y_i) +
    \lambda \sm j k \normx{\vf u_j}_1 
  } \nonumber\\
  &
  =
  \argf\min_{\vf u \in D} \Cbr{
    \sm i N \gamma_i(Z_i\tp \vf u, Y_i) + \lambda \normx{\vf u}_1
  },
\end{align}
where $\lambda>0$ is a tuning parameter and in the expression on the
second line, $\vf u$ is treated as a concatenation of $\eno{\vf u} k$.
We shall assume that the minima in \eqref{eq:lasso-target} and
\eqref{eq:lasso-form} are always obtained.  However, neither has to
have a unique minimizer.

Denote by $X$ the $N\times m$ design matrix with row vectors $X_1\tp$,
\ldots, $X_N\tp$.  For $l\ge 1$, let
\begin{align} \label{eq:local-sr}
  \sigma_{X,l} = \max\Cbr{
    \frac{\normx{X\vf v}_2}{\normx{\vf v}_2}: \vf v\in V,\
    1\le |\sppt(\vf v)|\le l
  }.
\end{align}
To utilize a restricted eigenvalue (RE) condition introduced in
\cite{bickel:09:as}, define, for $s\le m$ and $K>0$, 
\begin{align*}
  \kappa_X(s, K):= \min\Cbr{
    \frac{\normx{X\vf v}_2}{\sqrt{N} \normx{\pi_J \vf v}_2}: 
    \ \vf v\in\Reals^m\setminus\{\vf 0\}, \
    \normx{\pi_{J^c}\vf v}_1 \le K \normx{\pi_J \vf v}_1,\
    1\le |J|\le s
  }.
\end{align*}

\begin{theorem} \label{thm:lasso}
  Assume $S = \mx j k |\sppt(\vtheta_j)|<m/2$.  Let $q\in (0,1)$.
  Suppose the following conditions are satisfied.
  \begin{itemize} 
  \item[1)] \emph{(Restricted eigenvalue)}  For some $K>1$,
    $\kappa:=\kappa_X(2 S, K)>0$.
  \item[2)] \emph{(Quadratic lower bound of expected loss)}  For some
    $C_\gamma>0$ and all $i\le N$ and $\vf u\in D$,
    $\mean\gamma_i(Z_i\vf u, Y_i)  - \mean\gamma_i(Z_i\vtheta, Y_i)
    \ge C_\gamma \normx{Z_i(\vf u -  \vtheta)}_2^2$.
  \item[3)] \emph{(Local Lipschitz)} There is $M_q>0$, such that
    w.p.~at least $1-q$,
    \begin{align*}
      \Abs{
        \sm i N\dev{-.35ex}{\gamma_i(Z_i\vf u, Y_i)-
          \gamma_i(Z_i\vtheta, Y_i)
        }
      }
      \le M_q \normx{\vf u - \vtheta}_1
      = M_q \sm j k \normx{\vf u_j - \vtheta_j}_1,
      \quad\text{all }\ \vf u\in D.
    \end{align*}
  \end{itemize}
  Let
  \begin{align} \label{eq:LN}
    \lambda = \frac{(K+1) M_q}{K-1}, \qquad
    L_N = \frac{2M_q K}{N\kappa^2 C_\gamma(K-1)}.
  \end{align}
  Then, using this $\lambda$ in the Lasso estimator
  \eqref{eq:lasso-form}, w.p.~at least $1-q$, 
  \begin{align} \label{eq:lasso-error}
    \normx{\est\vtheta - \vtheta}_2^2
    \le k L_N^2 S \Sbr{
      2+K^2 +
      \frac{2(1+K^2) (N\kappa^2 + \sigma_{X,S}^2 k)}{N\kappa^2} \cf{k>1}
    }.
  \end{align}
\end{theorem}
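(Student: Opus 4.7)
My proof plan follows the standard Lasso consistency analysis of \cite{bickel:09:as}, adapted to the block structure induced by having $k>1$ parameter vectors $\vtheta_j$ sharing a common design $X$. Set $\vf w = \est\vtheta - \vtheta$ with blocks $\vf w_j = \est\vtheta_j - \vtheta_j$, and $J_j = \sppt(\vtheta_j)$, so $|J_j|\le S$ by hypothesis. I would begin from the Lasso basic inequality: the defining property in \eqref{eq:lasso-form} gives
\[
\sm iN \gamma_i(Z_i\est\vtheta,Y_i) + \lambda\normx{\est\vtheta}_1
\le \sm iN \gamma_i(Z_i\vtheta,Y_i) + \lambda\normx{\vtheta}_1.
\]
Adding and subtracting $L(\est\vtheta) - L(\vtheta)$ on the left, and using the local Lipschitz hypothesis 3) to absorb the stochastic part into $M_q\normx{\vf w}_1$, yields, with probability at least $1-q$,
\[
L(\est\vtheta) - L(\vtheta) + \lambda\normx{\est\vtheta}_1
\le \lambda\normx{\vtheta}_1 + M_q\normx{\vf w}_1.
\]

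Using $\normx{\vtheta_j}_1 - \normx{\est\vtheta_j}_1 \le \normx{\pi_{J_j}\vf w_j}_1 - \normx{\pi_{J_j^c}\vf w_j}_1$ for each $j$, substituting into the previous display, and noting that the specified $\lambda = (K+1)M_q/(K-1)$ produces the matched coefficients $\lambda + M_q = 2KM_q/(K-1)$ and $\lambda - M_q = 2M_q/(K-1)$, the quadratic lower bound in condition 2) (which unpacks to $L(\est\vtheta)-L(\vtheta)\ge C_\gamma \sm jk \normx{X\vf w_j}_2^2$ since $Z_i$ is block-diagonal with $X_i\tp$ repeated $k$ times) yields both the \emph{aggregated cone inequality}
\[
\sm jk \normx{\pi_{J_j^c}\vf w_j}_1 \le K \sm jk \normx{\pi_{J_j}\vf w_j}_1
\]
and the basic excess-loss estimate
\[
C_\gamma \sm jk \normx{X\vf w_j}_2^2
\le \frac{2KM_q}{K-1}\sm jk \normx{\pi_{J_j}\vf w_j}_1.
\]

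The principal difficulty, foreshadowed in the introduction, lies here: the cone inequality is aggregated across $j$, so that individual $\vf w_j$ need not satisfy $\normx{\pi_{J_j^c}\vf w_j}_1 \le K\normx{\pi_{J_j}\vf w_j}_1$, and consequently $\kappa = \kappa_X(2S,K)$ cannot be invoked block by block. My plan to navigate this is to split $\{1,\ldots,k\}$ into good blocks (where the individual cone holds) and bad blocks (where it fails), apply the RE bound directly to the good blocks, and compensate for the bad blocks through the sparse singular value $\sigma_{X,S}$, exploiting the fact that any excess of $\normx{\pi_{J_j^c}\vf w_j}_1$ over $K\normx{\pi_{J_j}\vf w_j}_1$ on a bad block must be balanced by a matching deficit on good blocks. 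Concretely, for each bad $j$ one enlarges the support to $J_j^+ = J_j\cup H_j$ where $H_j$ is the top-$S$ index set of $|\vf w_j^{J_j^c}|$ (so $|J_j^+|\le 2S$), bounds the restriction to $J_j^+$ via $\normx{X\pi_{J_j^+}\vf w_j}_2 \le \sigma_{X,2S}\normx{\pi_{J_j^+}\vf w_j}_2$, and handles the residual tail by the standard Bickel-Ritov-Tsybakov dyadic averaging argument; this is precisely the mechanism through which the $\sigma_{X,S}^2 k/(N\kappa^2)$ correction enters \eqref{eq:lasso-error}, and it is why that correction is multiplied by $\cf{k>1}$, since for $k=1$ the aggregated cone coincides with the individual one and no compensation is needed.

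Once the on-support control $\bigl(\sm jk\normx{\pi_{J_j}\vf w_j}_2^2\bigr)^{1/2} \lesssim L_N\sqrt{Sk}$ has been extracted, the remainder is routine: Cauchy-Schwarz on $\sm jk \normx{\pi_{J_j}\vf w_j}_1 \le \sqrt{Sk}\,\bigl(\sm jk\normx{\pi_{J_j}\vf w_j}_2^2\bigr)^{1/2}$ closes the excess-loss inequality, and the off-support tail is controlled by $\normx{\pi_{J_j^c}\vf w_j}_2 \le K\normx{\pi_{J_j}\vf w_j}_2$ from the top-$S$/tail-averaging trick. Splitting $\normx{\vf w}_2^2 = \sm jk\bigl(\normx{\pi_{J_j}\vf w_j}_2^2 + \normx{\pi_{J_j^c}\vf w_j}_2^2\bigr)$ and collecting constants then produces \eqref{eq:lasso-error}. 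The hardest and only nonstandard ingredient is the cone-decoupling step above; everything else is a direct extension of the $k=1$ Bickel-Ritov-Tsybakov analysis.
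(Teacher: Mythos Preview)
Your high-level architecture matches the paper: derive the basic inequality, unpack Condition~2) into $C_\gamma\sum_j\|X\vf w_j\|_2^2$, obtain the \emph{aggregated} cone constraint, and then split $\{1,\ldots,k\}$ into ``good'' blocks $I=\{j:K\|\pi_{J_j}\vf w_j\|_1\ge\|\pi_{J_j^c}\vf w_j\|_1\}$ (where RE applies directly) and ``bad'' blocks $I^c$, using the observation that the total excess $\sum_{j\notin I}(\|\pi_{J_j^c}\vf w_j\|_1-K\|\pi_{J_j}\vf w_j\|_1)$ is bounded by the total deficit on $I$. That is exactly Steps~1--2 and the opening of Step~3 in the paper.

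Where your sketch diverges---and becomes a gap---is the actual mechanism for the bad blocks. The inequality you write, $\|X\pi_{J_j^+}\vf w_j\|_2\le\sigma_{X,2S}\|\pi_{J_j^+}\vf w_j\|_2$, points the wrong way: to pass from the already-controlled $\|X\vf w_j\|_2$ to a bound on $\|\vf w_j\|_2$ you need a \emph{lower} bound on the operator, and for $j\notin I$ neither RE nor dyadic averaging supplies one. The paper's device is different and rather specific: for each bad $j$ it constructs a perturbed vector
\[
\tilde\vtheta_j=\est\vtheta_j+\frac{\delta_j}{KS}\sum_{h\in J_j}\sign(\est\theta_{jh}-\theta_{jh})\vf e_h,
\qquad \delta_j=\|\pi_{J_j^c}\est\vtheta_j\|_1-K\|\pi_{J_j}\est\vtheta_j-\vtheta_j\|_1,
\]
which \emph{increases} each on-support residual just enough to force the cone condition to hold with equality for $\tilde\vtheta_j-\vtheta_j$. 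Now RE applies to $\tilde\vtheta_j-\vtheta_j$, and since $\tilde\vtheta_j-\est\vtheta_j$ is $S$-sparse (supported on $J_j$), the quantity $\sigma_{X,S}$---not $\sigma_{X,2S}$---enters through $\|X(\tilde\vtheta_j-\est\vtheta_j)\|_2\le\sigma_{X,S}\|\tilde\vtheta_j-\est\vtheta_j\|_2$. One then checks $\|\est\vtheta_j-\vtheta_j\|_2\le\|\tilde\vtheta_j-\vtheta_j\|_2$ coordinatewise, and the balance bound $\sum_{j\notin I}\delta_j\le K L_N S|I|$ (coming from your ``matching deficit'' observation) closes the estimate. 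This perturbation trick is the one nonstandard ingredient, and your proposal does not contain it.
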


Comparing to the case $k=1$, \eqref{eq:lasso-error} has a multiple of
$1+ \sigma_{X,S}^2 k/N\kappa^2$.  The constant $\sigma_{X,S}$ is
related to the so called $S$-restricted isometry constant
\cite{candes:tao:07}.  The ratio of $\sigma_{X,S}$ to $\kappa$ bears
some similarity to the condition number of matrix, despite the
constraints imposed on their definitions.

\begin{example} \label{ex:MLE}\rm
  Let $f(y\gv \vf t)$ be probability densities on $\Reals$
  parameterized by $\vf t\in\Reals^k$.  Suppose that given covariate
  $x\in V = \Reals^m$, a response variable $Y$ has density $f(y\gv x\tp
  \vtheta_1, \ldots, x\tp \vtheta_k)$, with $\eno{\vtheta} k\in V$
  being unknown parameter values.  To estimate $\vtheta$, suppose
  $Y_i$ under fixed covariate values $X_i$, $i\le N$, are observed.
  Denote $Z_i$ as in \eqref{eq:regress-Z}.  If it is known that
  $\vtheta = (\eno{\vtheta} k)$ is in a bounded set $D\subset V^k$,
  then by \eqref{eq:lasso-form}, one type of $\ell_1$-regularized
  likelihood estimator of $\vtheta$ is
  $$
  \est\vtheta = \argf\min_{\vf u\in D}\Cbr{
    -\sm iN \ln f(Y_i\gv Z_i\vf u) + \lambda \normx{\vf u}_1
  },
  $$
  where the tuning parameter $\lambda$ will be selected in a moment.
  It is seen that the loss functions $\eno\gamma N$ in the setup are 
  $\gamma_i(\vf t, y) = -\ln f(y\gv \vf t)$ and for any $\vf u\in D$,
  the total expected loss is
  $$
  L(\vf u) = -\sm i N \mean\ln f(Y_i\gv Z_i\tp \vf u) = 
  \sm i N D(Z_i\tp \vf u, Z_i\tp\vtheta) + L(\vtheta),
  $$
  where for any $\vf s$, $\vf t\in\Reals^k$,
  $$
  D(\vf s, \vf t)
  = \int f(y\gv\vf t) \ln \frac{f(y\gv\vf t)}{f(y\gv\vf s)}\,\dd y
  $$
  is the Kullback-Leibler distance from $f(y\gv \vf s)$ to $f(y\gv \vf
  t)$.   It is well known that $D(\vf s, \vf t)\ge 0$ with equality if
  and only if $f(y\gv\vf t)\equiv f(y\gv\vf s)$.  Therefore, $\vtheta$
  minimizes $L(\vf u)$.   However, for high dimensional $V$ and
  relatively small $N$, $\vtheta$ may not be the unique minimizer.
  
  Suppose that all $\vtheta_j$ satisfy $|\sppt(\vtheta_j)|\le
  \dim(V)/2=m/2$.  To bound $\normx{\est\vtheta - \vtheta}_2$, assume
  $X = (\eno X N)\tp$ satisfies the RE Condition 1) in
  Theorem~\ref{thm:lasso}.  Since $D$ 
  is bounded, the set of $Z_i\vf u$, $i\le  N$, $\vf u\in D$ is in a
  compact domain $A$.  Suppose that for some $C_\gamma>0$,
  \begin{align} \label{eq:KL}
    D(\vf s, \vf t) \ge C_\gamma\normx{\vf s-\vf t}_2^2, \quad \vf s, \vf
    t\in A.
  \end{align}
  The above condition is satisfied under mild conditions on the
  regularity of $f(y\gv \vf t)$, using the fact that for fixed $\vf
  t$, the Hessian of $D(\vf s, \vf t)$ at $\vf s=\vf t$ is the Fisher
  information at $\vf t$, which is nonnegative definite.  Then for any
  $i\le N$ and $\vf u\in D$,
  $$
  \mean\gamma_i(Z_i\tp\vf u, Y_i) -
  \mean\gamma_i(Z_i\tp\vtheta, Y_i)
  = 
  D(Z_i\vtheta, Z_i\vf u) \ge C_\gamma\normx{Z_i(\vtheta - \vf u)}_2^2,
  $$
  so Condition 2) in Theorem \ref{thm:lasso} is satisfied.  Finally,
  by Theorem \ref{thm:local-lip}, if $-\ln f(y\gv \vf t)$ are first
  order differentiable in $\vf t$, such that the partial
  derivatives are uniformly bounded and have uniformly bounded
  Lipschitz coefficient, then for any $q\in (0,1)$, there is $M_q$
  such that Condition 3) in Theorem \ref{thm:lasso} is satisfied.  As
  a result, by setting $\lambda$ as in \eqref{eq:LN}, we get a bound
  for $\normx{\est\vtheta-\vtheta}_2$ using \eqref{eq:lasso-error}.
  \qed
\end{example}

\subsection{Proof of Theorem \ref{thm:lasso}}
The proof is divided into 3 steps.
\paragraph{\it Step 1.\/}  The argument in this step has now become
standard \cite{bickel:09:as}.  Let $c=(K-1)/2$, where $K$ is as in
Condition 1).  Then $\lambda = (1+1/c)M_q$.  From the definition of
$\est\vtheta$,
\begin{align} \label{eq:est-ineq}
  L(\est\vtheta) - L(\vtheta)
  \le
  \sm i N \devx{-.35ex}{\gamma_i(Z_i\vtheta, Y_i)}-
  \sm i N\devx{-.35ex}{\gamma_i(Z_i\est\vtheta, Y_i)}  + 
  (1+1/c)M_q(\normx{\vtheta}_1 - \normx{\est\vtheta}_1).
\end{align}
By Condition 2) in Theorem \ref{thm:lasso},
\begin{align*}
  L(\est\vtheta) - L(\vtheta)
  \ge C_\gamma \sm i N \normx{Z_i(\est\vtheta - \vtheta)}_2^2
  &
  = C_\gamma \sm iN \sm jk |X_i\tp(\est\vtheta_j - \vtheta_j)|^2
  \\
  &
  = C_\gamma \sm jk \normx{X(\est\vtheta_j - \vtheta_j)}_2^2.
\end{align*}
Then by Condition 3) and \eqref{eq:est-ineq}, w.p.~at least $1-q$,
\begin{align*}
  C_\gamma \sm j k \normx{X(\est\vtheta_j - \vtheta_j)}_2^2
  &
  \le  M_q \sm j k \normx{\est\vtheta_j - \vtheta_j}_1 +
  (1+1/c)M_q 
  \sm j k (\normx{\vtheta_j}_1 -\normx{\est\vtheta_j}_1).
\end{align*}
Let $\eno J k\subset\{1,\ldots, m\}$ be any sets with $J_j\supset
\sppt(\vtheta_j)$.  Then for each $j\le k$,
\begin{align*}
  &
  \normx{\est\vtheta_j - \vtheta_j}_1 -
  (1+1/c) (\normx{\vtheta_j}_1 - \normx{\est\vtheta_j}_1) 
  \\
  &\qquad
  =
  \normx{\pi_{J_j}\est\vtheta_j - \vtheta_j}_1 + 
  \normx{\pi_{J_j^c}\est\vtheta_j}_1 + 
  (1+1/c) (\normx{\vtheta_j}_1 -
  \normx{\pi_{J_j}\est\vtheta_j}_1
  - \normx{\pi_{J_j^c}\est\vtheta_j}_1) \\
  &
  \qquad\le
  (2+1/c)\normx{\pi_{J_j}\est\vtheta_j - \vtheta_j}_1
  - (1/c)\normx{\pi_{J_j^c}\est\vtheta_j}_1\\
  &
  \qquad=
  (K/c)\normx{\pi_{J_j}\est\vtheta_j - \vtheta_j}_1
  - (1/c)\normx{\pi_{J_j^c}\est\vtheta_j}_1.
\end{align*}
It follows that w.p.~at least $1-q$,
\begin{align} \label{eq:lasso}
  \sm jk \normx{X(\est\vtheta_j - \vtheta_j)}_2^2
  \le \frac{M_q }{C_\gamma c}
  \sm j k (K\normx{\pi_{J_j}\est\vtheta_j - \vtheta_j}_1 
  - \normx{\pi_{J_j^c} \est\vtheta_j}_1).
\end{align}

Fix an instance of $(\eno Y N)$ such that \eqref{eq:lasso} holds.  Let
$\eno A k\subset \{1,\ldots, m\}$ be sets such that
$\sppt(\vtheta_j)\subset A_j$ and $|A_j|=S$.  Then \eqref{eq:lasso}
holds with $J_j=A_j$.  Let
$$
I = \{j\le k: K\normx{\pi_{A_j}\est\vtheta_j - \vtheta_j}_1  \ge
\normx{\pi_{A_j^c}\est\vtheta_j}_1\}.
$$
Then $I\not=\emptyset$.  We shall consider $j\in I$ and $j\not\in I$
separately.

Before moving to the next step, for each $j\le k$, let $B_j$ be the
union of $A_j$ and the indices of the $S$ largest $|\est\theta_{jh}|$
outside of $A_j$.  Then \eqref{eq:lasso} holds with $J_j = B_j$.  It
is now well-known that \cite{candes:tao:07}
\begin{align} \label{eq:ct-ineq}
  \normx{\pi_{B_j^c} \est\vtheta_j}_2^2 
  \le \frac{\normx{\pi_{A_j^c} \est\vtheta_j}_1^2}{S}.
\end{align}
It is easy to see that for $j\le k$, $\normx{\pi_{A_j}
\est\vtheta_j - \vtheta}_1 \le\normx{\pi_{B_j}
\est\vtheta_j - \vtheta}_1$ and $\normx{\pi_{A_j^c} \est\vtheta_j}_1
\ge \normx{\pi_{B_j^c} \est\vtheta_j}_1$.

\paragraph{\it Step 2.\/} 
From \eqref{eq:lasso},
\begin{align*} 
  \sum_{j\in I} \normx{X(\est\vtheta_j - \vtheta_j)}_2^2 
  &\le \frac{M_q}{C_\gamma c} \sum_{j\in I} 
  (K\normx{\pi_{A_j}\est\vtheta_j - \vtheta_j}_1 
  - \normx{\pi_{A_j^c} \est\vtheta_j}_1) \\
  &\le \frac{M_q}{C_\gamma c} \sum_{j\in I} 
  (K\normx{\pi_{B_j}\est\vtheta_j - \vtheta_j}_1 
  - \normx{\pi_{B_j^c} \est\vtheta_j}_1).
\end{align*}
For each $j\in I$, $K\normx{\pi_{B_j}\est\vtheta_j - \vtheta_j}_1
-\normx{\pi_{B_j^c}\est\vtheta_j}_1 \ge K\normx{\pi_{A_j}\est\vtheta_j 
  - \vtheta_j}_1 -\normx{\pi_{A_j^c}\est\vtheta_j}_1 \ge 0$,
so by Condition 1), $N \kappa^2\normx{\pi_{J_j}\est\vtheta_j -
  \vtheta_j}_2^2 \le \normx{X(\est\vtheta_j - \vtheta_j)}_2^2$ holds 
for $J_j = A_j$, $B_j$.  Letting $J_j=A_j$, from the above display and
Cauchy-Schwartz inequality,
\begin{align*}
  &
  N\kappa^2\sum_{j\in I} \normx{\pi_{A_j}\est\vtheta_j -
    \theta_j}_2^2 
  \le \frac{M_qK}{C_\gamma c}
  \sum_{j\in I} \normx{\pi_{A_j}\est\vtheta_j-\vtheta_j}_1 \\
  &\hspace{2.5cm}
  \le \frac{M_qK\sqrt{S}}{C_\gamma c}
  \sum_{j\in I} \normx{\pi_{A_j}\est\vtheta_j-\vtheta_j}_2 
  \le \frac{2M_qK\sqrt{S|I|}}{C_\gamma(K-1)}
  \Grp{
    \sum_{j\in I} \normx{\pi_{A_j}\est\vtheta_j-\vtheta_j}_2^2
  }^{1/2},
\end{align*}
giving
\begin{align} \label{eq:norm-IA}
  \sum_{j\in I} \normx{\pi_{A_j}\est\vtheta_j - \vtheta_j}_2^2 
  \le L_N^2 S|I|.
\end{align}
Likewise, letting $J_j=B_j$, it follows that
\begin{align} \label{eq:norm-IB}
  \sum_{j\in I} \normx{\pi_{B_j}\est\vtheta_j - \vtheta_j}_2^2 
  \le 2L_N^2  S|I|,
\end{align}
where the factor 2 is due to $|B_j|=2S$.

By \eqref{eq:ct-ineq}, \eqref{eq:norm-IA} and
Cauchy-Schwartz inequality, it follows that,
\begin{align*}
  \sum_{j\in I} \normx{\pi_{B_j^c}\est\vtheta_j}_2^2
  \le \sum_{j\in I} \frac{\normx{\pi_{A_j^c} \est\vtheta_j}_1^2}{S}
  &\le K^2\sum_{j\in I} 
  \frac{\normx{\pi_{A_j} \est\vtheta_j - \vtheta_j}_1^2}{S} \\
  &\le K^2\sum_{j\in I} 
  \normx{\pi_{A_j} \est\vtheta_j - \vtheta_j}_2^2.
\end{align*}
Combining the inequality with \eqref{eq:norm-IA} and
\eqref{eq:norm-IB},
\begin{align}
  \sum_{j\in I} \normx{\est\vtheta_j - \vtheta_j}_2^2
  &=
  \sum_{j\in I} (\normx{\pi_{B_j} \est\vtheta_j - \vtheta_j}_2^2
  + \normx{\pi_{B_j^c} \est\vtheta_j}_2^2) \nonumber\\
  &\le
  \sum_{j\in I} (\normx{\pi_{B_j} \est\vtheta_j - \vtheta_j}_2^2
  + K^2\normx{\pi_{A_j} \est\vtheta_j - \vtheta_j}_2^2) \nonumber\\
  &\le (2+K^2) L_N^2 S|I|.  \label{eq:norm-I}
\end{align}

\paragraph{\it Step 3.\/}  We next consider $j\not\in I$.  The idea
is to modify each $\est\vtheta_j$ into some $\tilde\vtheta_j$ that can
be dealt with by the argument in Step 2.  For $j\not\in I$,
$K\normx{\pi_{A_j} \est\vtheta_j - \vtheta_j}_1 <
\normx{\pi_{A_j^c}\est\vtheta_j}_1$. 
Then from \eqref{eq:lasso}, we have both
\begin{gather} \label{eq:norm2}
  \sum_{j\not\in I} \normx{X(\est\vtheta_j - \vtheta_j)}_2^2 
  \le \frac{M_q K}{C_\gamma c} \sum_{j\in I} 
  \normx{\pi_{A_j}\est\vtheta_j - \vtheta_j}_1
  =L_N \kappa^2 N\sum_{j\in I} 
  \normx{\pi_{A_j}\est\vtheta_j - \vtheta_j}_1
\end{gather}
and
\begin{gather} \label{eq:norm3}
  0\le \sum_{j\not\in I} 
  (\normx{\pi_{A_j^c} \est\vtheta_j}_1 -
  K\normx{\pi_{A_j}\est\vtheta_j - \vtheta_j}_1) 
  \le K\sum_{j\in I} \normx{\pi_{A_j}\est\vtheta_j - \vtheta_j}_1.
\end{gather}
By Cauchy-Schwartz inequality and \eqref{eq:norm-IA},
\begin{align} \label{eq:norm-I1}
  \sum_{j\in I} \normx{\pi_{A_j}\est\vtheta_j - \vtheta_j}_1
  &\le 
  \sqrt{S}\sum_{j\in I} \normx{\pi_{A_j}\est\vtheta_j - \vtheta_j}_2 
  \nonumber\\
  &\le 
  \sqrt{S |I|}\Grp{
    \sum_{j\in I} \normx{\pi_{A_j}\est\vtheta_j - \vtheta_j}_2^2
  }^{1/2}
  \le L_N S|I|.
\end{align}
Let $\delta_j = \normx{\pi_{A_j^c} \est\vtheta_j}_1-
K\normx{\pi_{A_j}\est\vtheta_j - \vtheta_j}_1$.  Then $\delta_j>0$
for $j\not\in I$ and by \eqref{eq:norm3} and \eqref{eq:norm-I1}, 
\begin{gather} \label{eq:diff-sum}
  \sum_{j\not\in I} \delta_j \le K L_N S|I|.
\end{gather}
For each $j\not\in I$, define
$$
\tilde\vtheta_j
= \est\vtheta_j + \frac{\delta_j}{KS}\sum_{h\in A_j}
\sign(\est\theta_{jh} - \theta_{jh}) \vf e_h,
$$
where $\sign(x)=\cf{x\ge 0}-\cf{x<0}$ and $\vf e_h$ is the $h$th
standard basis vector of $\Reals^m$.  Then for $h\not\in A_j$,
$\tilde\theta_{jh} = \est\theta_{jh}$, while for $h\in A_j$,
$$
|\tilde\theta_{jh} - \theta_{jh}| =
\Abs{
  \est\theta_{jh} - \theta_{jh} +
  \frac{\delta_j}{KS} \sign(\est\theta_{jh} - \theta_{jh})
}
=|\est\theta_{jh} - \theta_{jh}|+\frac{\delta_j}{KS}.
$$
As a result, for $j\not\in I$,
\begin{gather*}
  K\normx{\pi_{A_j}\tilde\vtheta_j - \vtheta_j}_1
  = K\Grp{
    \normx{\pi_{A_j}\est\vtheta_j - \vtheta_j}_1
    +\sum_{h\in A_j} \frac{\delta_j}{KS}
  }
  =\normx{\pi_{A_j^c} \est\vtheta_j}_1
  = \normx{\pi_{A_j^c} \tilde\vtheta_j}_1,
\end{gather*}
and consequently $\normx{\pi_{B_j^c} \tilde\vtheta_j}_1 \le 
K\normx{\pi_{B_j}\tilde\vtheta_j - \vtheta_j}_1$.  Then by Condition
1),
\begin{align} \label{eq:norm-cI}
  \normx{X(\tilde\vtheta_j - \vtheta_j)}_2^2 
  \ge 
  N\kappa^2\normx{\pi_{B_j}\tilde\vtheta_j-\vtheta_j}_2^2
  \ge
  N\kappa^2\normx{\pi_{A_j}\tilde\vtheta_j-\vtheta_j}_2^2.
\end{align}

On the other hand, by the inequality $\normx{\vf s + \vf t}_2^2
\le 2 (\normx{\vf s}_2^2+\normx{\vf t}_2^2)$ for $\vf s$, $\vf t\in
\Reals^N$, and the inequalities in \eqref{eq:norm2} and
\eqref{eq:norm-I1}
\begin{align}
  \sum_{j\not\in I}\normx{X(\tilde\vtheta_j-
    \vtheta_j)}_2^2
  &\le
  2\sum_{j\not\in I} \Grp{\normx{X(\est\vtheta_j-\vtheta_j)}_2^2
    + \normx{X(\tilde\vtheta_j - \est\vtheta_j)}_2^2
  }\nonumber\\
  &
  \le
  2 L_N \kappa^2 N \sum_{j\in I} 
  \normx{\pi_{A_j}\est\vtheta_j - \vtheta_j}_1
  +2\sum_{j\not\in I}\normx{X(\tilde\vtheta_j - \est\vtheta_j)}_2^2
  \nonumber \\
  &
  \le 2L_N^2 \kappa^2 N S|I|+
  2\sum_{j\not\in I}\normx{X(\tilde\vtheta_j - \est\vtheta_j)}_2^2.
  \label{eq:norm-cI2}
\end{align}
Recall the definition of $\sigma_{X,l}$.  Since
$|\sppt(\tilde\vtheta_j - \est\vtheta_j)|\le |A_j| = S$, then
\begin{align*}
  \sum_{j\not\in I}\normx{X(\tilde\vtheta_j - \est\vtheta_j)}_2^2
  &\le
  \sigma_{X,S}^2\sum_{j\not\in I} 
  \normx{\tilde\vtheta_j - \est\vtheta_j}_2^2 \\
  &=
  \sigma_{X,S}^2\sum_{j\not\in I}\sum_{h\in A_j} (\delta_j/KS)^2 
  = \frac{\sigma_{X,S}^2}{K^2 S}
  \sum_{j\not\in I}\delta_j^2
\end{align*}
Then by \eqref{eq:diff-sum}
\begin{align*}
  \sum_{j\not\in I}\normx{X(\tilde\vtheta_j - \est\vtheta_j)}_2^2
  \le 
  \frac{\sigma_{X,S}^2}{K^2 S}
  \Grp{\sum_{j\not\in I}\delta_j}^2
  \le \sigma_{X,S}^2 L_N^2 S |I|^2.
\end{align*}
Plug this inequality into \eqref{eq:norm-cI2} and combine the result
with \eqref{eq:norm-cI} to get
$$
\sum_{j\not\in I}
\normx{\pi_{A_j}\tilde\vtheta_j-\vtheta_j}_2^2
\le
\sum_{j\not\in I}
\normx{\pi_{B_j}\tilde\vtheta_j-\vtheta_j}_2^2
\le \frac{2S L_N^2 |I| (N\kappa^2 + \sigma_{X,S}^2 |I|)}{N\kappa^2}.
$$
Following the derivation of \eqref{eq:norm-I},
$$
\sum_{j\not\in I} \normx{\tilde \vtheta_j - \vtheta_j}_2^2
\le \frac{2S(1+K^2)L_N^2 |I| (N\kappa^2+\sigma_{X,S}^2 |I|)}{N\kappa^2}.
$$
It is easy to see that $\normx{\est \vtheta_j - \vtheta_j}_2 \le
\normx{\tilde\vtheta_j - \vtheta_j}_2$ for $j\not\in I$.  Therefore,
\begin{align} \label{eq:norm-I2}
  \sum_{j\not\in I} \normx{\tilde \vtheta_j - \vtheta_j}_2^2
  \le \frac{2S(1+K^2)L_N^2 |I| (N\kappa^2 + \sigma_{X,S}^2|I|)}{N\kappa^2}.
\end{align}
Note that the left hand is 0 if $k=1$.  Therefore, we can multiply
the right hand side by $\cf{k>1}$.  Finally, combining
\eqref{eq:norm-I} and \eqref{eq:norm-I2}, the proof is complete.

\section{Hidden variable model} \label{sec:hidden}
Suppose $(\omega_1, Y_1)$, \ldots, $(\omega_N, Y_N)$ are independent
random vectors taking values in $\Omega\times\cY$, and the space can
be equipped with product measures $\dd\mu_i\times \dd\nu_i$, $i\le N$,
that are not necessarily the same, such that each $(\omega_i, Y_i)$
has a joint density with respect to $\dd\mu_i\times\dd\nu_i$ as 
\begin{align} \label{eq:hidden-joint}
  \prob\Cbr{\omega_i\in \dd z, Y_i\in \dd y}
  = \frac{g_i(x_i(z)\tp\vtheta)k_i(z,y)\mu_i(\dd z)
    \nu_i(\dd y)}{Z_i(\vtheta)},
\end{align}
where $k_i$, $g_i$ and $x_i$ are known functions with $x_i:\Omega\to
\Reals^p$, $\vtheta\in\Reals^p$ is the
true parameter value which is unknown, and for each $\vf
u\in\Reals^p$, $Z_i(\vf u)$ is the normalizing constant
$$
Z_i(\vf u) = \int g_i(x_i(z)\tp\vf u) k_i(z,y)
\mu_i(\dd z) \nu_i(\dd y).
$$

Suppose that only $\eno Y N$ are observed, while $\eno\omega N$ are
hidden.  The (log)-likelihood function is then
$$
\ell(\vf u) = \ell(\vf u, \eno Y N)
=-\sm i N \ln \int g_i(x_i(z)\tp\vf u) k_i(z, Y_i)
\mu_i(\dd z) + \sm i N \ln Z_i(\vf u).
$$
We next consider the local stochastic Lipschitz continuity of
$\ell(\vf u)$ at the true parameter value $\vtheta$.  By
\begin{gather*}
  \frac{\displaystyle\int g_i(x_i(z)\tp\vf u) k_i(z, Y_i)
    \mu_i(\dd z)}
  {\displaystyle\int g_i(x_i(z)\tp\vtheta) k_i(z, Y_i)
    \mu_i(\dd z)} 
  = \mean\Sbr{\frac{g_i(x_i(\omega_i)\tp\vf u)}{
      g_i(x_i(\omega_i)\tp\vtheta)}\ \vline\ Y_i},
\end{gather*}
and
$$
\frac{Z_i(\vf u)}{Z_i(\vtheta)} = \mean\Sbr{
  \frac{g_i(x_i(\omega_i)\tp\vf u)}{g_i(x_i(\omega_i)\tp\vtheta)}},
$$
we have
\begin{align*}
  \ell(\vf u) - \ell(\vtheta)
  &
  =-\sm i N \ln 
  \frac{\displaystyle
    \int g_i(x_i(z)\tp\vf u) k_i(z, Y_i) \mu_i(\dd z)
  }{\displaystyle
    \int g_i(x_i(z)\tp\vtheta) k_i(z, Y_i) \mu_i(\dd z)
  } 
  +\sm i N \ln \frac{Z_i(\vf u)}{Z_i(\vtheta)}\\
  &
  =
  -\sm i N \ln \mean\Sbr{
    \frac{g_i(x_i(\omega_i)\tp\vf u)}{g_i(x_i(\omega_i)\tp\vtheta)}
    \ \vline\ Y_i
  }
  +
  \sm i N\ln \mean\Sbr{
    \frac{g_i(x_i(\omega_i)\tp\vf u)}{g_i(x_i(\omega_i)\tp\vtheta)}
  }.
\end{align*}

Let $D$ be the search domain and suppose it is known that $\vtheta\in
D$.  For the tail of
$$
\sup_{\vf u\in D\setminus\{\vtheta\}} 
\frac{|\dev{-.35ex}{\ell(\vf u) - \ell(\vtheta)}|}
{\normx{\vf u-\vtheta}_1},
$$
our analysis is based on the following assumption.
\begin{assumption} \label{a:hidden}
  There is $M_X>0$, such that w.p.~1,
  $$
  \normx{x_i(\omega_i)}_\infty \le M_X, \quad \text{all}\ \  i\le N.
  $$
  For all $i\le N$, $g_i(t)$ is first order differentiable.  Moreover,
  there are $0<A_g<B_g<\infty$, $F_1<\infty$, and $F_2<\infty$, such
  that
  $$
  A_g \le g_i(t)\le B_g, \quad
  |g_i'(t)| \le F_1, \quad |g_i'(t) - g_i'(s)|\le F_2|t-s|, \quad
  \text{all}\ \ i\le N.
  $$
\end{assumption}

We need to introduce some constants.  Denote
$$
R_D = \sup_{\vf u\in D} \normx{\vf u-\vtheta}_1, \quad
I_g=\Sbr{A_g/B_g, B_g/A_g}.
$$
Define for $z\in \Reals$ 
$$
\varrho(z) = \begin{cases}
  z^{-1} \ln(1+z) - 1 & z\not=0 \\
  0 & z=0
\end{cases}
$$
It is easy to see that $\varrho$ is smooth and strictly decreasing on
$(-1,\infty)$ with $\varrho(0)=0$.  Denote
$$
\varrho_0:=\sup_{t\in I_g} |\varrho(t-1)|<\infty, \quad
\varrho_1:= \sup_{t\in I_g} |\varrho'(t-1)|<\infty.
$$
Denote the following constants
\begin{gather*}
  \psi_1= F_1/A_g, \quad
  \psi_2= F_2 M_X/(2 A_g), \quad
  \psi_3= \min\Grp{2F_1, F_2 M_X R_D/2}/A_g, \\
  \psi_4= [\psi_1 \varrho_0 + \psi_3 (1+\varrho_0)]M_X, \quad
  \psi_5= 2\psi_1 M_X \varrho_1, \quad
  \psi_6= 2(\varrho_0+\psi_3 M_X\varrho_1).
\end{gather*}

\begin{theorem} \label{thm:hidden}
  Denote
  $$
  S_X = \mx j p \sqrt{\sm i N x_{ij}(\omega_i)^2}
  $$
  where $x_{ij}(\omega_i)$ is the $j$th coordinate of
  $x_i(\omega_i)$. 
  Under Assumption \ref{a:hidden}, for any $q_0$, $q_1\in (0,1)$ with
  $q_0+q_1<1$, w.p.~at least $1-q_0-q_1$,
  \begin{align*}
    \sup_{\vf u\in D\setminus\{\vtheta\}} 
    \frac{|\dev{-.35ex}{\ell(\vf u) - \ell(\vtheta)}|}
    {\normx{\vf u-\vtheta}_1} 
    &\le 
    2\sqrt{2}R_D\mean S_X \Grp{A\sqrt{\ln(2p)} +
      2\psi_3(\psi_5+\psi_6)
    } \\
    &\qquad
    + 
    \sqrt{2N}\Grp{\psi_1 M_X \sqrt{\ln(2p/q_0)}+
      2\psi_4\sqrt{\ln(p/q_1)}},
  \end{align*}
  where $A= 2\psi_2 (1+\psi_6) + (\psi_1 + 2
  \psi_2 R_D + 2\psi_3)(\psi_5+\psi_6)$.
\end{theorem}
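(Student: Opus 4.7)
The plan is to decompose $\dev{-.35ex}{\ell(\vf u)-\ell(\vtheta)}$ into a linear functional of $\vf u-\vtheta$ plus a nonlinear residual, control the linear part by Hoeffding's inequality together with a union bound over coordinates, and attack the residual by symmetrization, the univariate contraction of Lemma \ref{lemma:u-compare}, and the functional concentration \eqref{eq:mc} of Lemma \ref{lemma:mc}.

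First, since $\sm iN \ln[Z_i(\vf u)/Z_i(\vtheta)]$ is deterministic, it vanishes upon centering. Setting
\[
\rho_i(\vf u) = \frac{g_i(x_i(\omega_i)\tp\vf u)}{g_i(x_i(\omega_i)\tp\vtheta)},
\qquad
\delta_i(\vf u) = \mean[\rho_i(\vf u)\gv Y_i]-1,
\]
Assumption \ref{a:hidden} gives $\rho_i(\vf u),\,\delta_i(\vf u)+1\in I_g$, so the identity $\ln(1+z)=z(1+\varrho(z))$ yields
\[
\dev{-.35ex}{\ell(\vf u)-\ell(\vtheta)}
= -\sm iN \dev{-.35ex}{\delta_i(\vf u)}
 - \sm iN \dev{-.35ex}{\delta_i(\vf u)\varrho(\delta_i(\vf u))}.
\]
A first-order Taylor expansion of $g_i$ at $x_i(\omega_i)\tp\vtheta$, using $g_i\ge A_g$, $|g_i'|\le F_1$ and $|g_i'(t)-g_i'(s)|\le F_2|t-s|$, produces
\[
\rho_i(\vf u)-1 = \phi_i\, x_i(\omega_i)\tp(\vf u-\vtheta) + r_i(\vf u),
\qquad |\phi_i|\le\psi_1,
\]
where the remainder $r_i$ obeys both a first-order bound of size $\psi_1 M_X\normx{\vf u-\vtheta}_1$ and a second-order bound of size $\psi_2 M_X\normx{\vf u-\vtheta}_1\cdot|x_i(\omega_i)\tp(\vf u-\vtheta)|$, combined via the minimum into the $\psi_3$-type uniform control. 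Conditioning on $Y_i$ and centering isolates a linear coefficient vector $\sm iN\dev{-.35ex}{\mean[\phi_i\,x_i(\omega_i)\gv Y_i]}$ contracted against $\vf u-\vtheta$, together with nonlinear residuals, one further composed with $\varrho$.

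For the linear coefficient vector, each coordinate is a sum of independent centered random variables with range at most $2\psi_1 M_X$; Hoeffding's inequality plus a union bound over $j\le p$ gives, with probability at least $1-q_0$, an $\ell_\infty$ bound $\psi_1 M_X\sqrt{2N\ln(2p/q_0)}$, which supplies the $\sqrt{2N}\,\psi_1 M_X\sqrt{\ln(2p/q_0)}$ summand after dividing by $\normx{\vf u-\vtheta}_1$. For the nonlinear residual I would symmetrize (Lemma 6.3 of \cite{ledoux:91}) to introduce i.i.d.\ Rademacher variables $\eno\rx N$. Because $g_i$ and $\varrho$ act through the single scalar $x_i(\omega_i)\tp\vf u$, the composite residual is a Lipschitz function of that scalar whose constant is built from $\psi_3$, $\psi_5$ and $\psi_6$ via product and chain rules (using $|\varrho|\le\varrho_0$, $|\varrho'|\le\varrho_1$ on $I_g-1$). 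The univariate contraction Lemma \ref{lemma:u-compare} then reduces the Rademacher supremum to that of the linear family $\sm iN \rx_i x_{ij}(\omega_i)(u_j-\theta_j)$, and applying Lemma \ref{lemma:massart} coordinatewise together with $\normx{\vf u-\vtheta}_1\le R_D$ and the definition of $S_X$ bounds the expected supremum by $2\sqrt{2}\,R_D\mean S_X\,[A\sqrt{\ln(2p)}+2\psi_3(\psi_5+\psi_6)]$ with $A$ as stated.

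Finally, \eqref{eq:mc} of Lemma \ref{lemma:mc} applied to the nonlinear supremum, with uniform summand bound of order $\psi_4$ and variance bound of order $N\psi_4^2$, upgrades the expectation estimate at confidence $1-q_1$ by an additive $\sqrt{2N}\cdot 2\psi_4\sqrt{\ln(p/q_1)}$, and a union bound with the event from the linear step completes the proof. The main obstacle is the bookkeeping in the nonlinear step: the conditional expectation $\mean[\,\cdot\gv Y_i]$ is interposed between the two Lipschitz layers (the ratio $\rho_i-1$ and the transcendental $\varrho$), and the outer randomness in $\mean S_X$ is over the unobserved $\omega_i$ while the high-probability statement is over $Y_i$, so one must use Fubini carefully when symmetrizing. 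Tracking how $F_1, F_2, M_X, \varrho_0, \varrho_1$ propagate through the composition to form exactly $\psi_4,\psi_5,\psi_6$ and combine into the stated $A$ is the delicate calculation; fortunately the univariate dependence $t\mapsto g_i(t)$ lets us invoke Lemma \ref{lemma:u-compare} rather than the multivariate comparison of Section \ref{sec:compare}.
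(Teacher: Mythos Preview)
Your decomposition and the treatment of the linear piece are correct and match the paper. The genuine gap is in the nonlinear step, specifically your claim that ``$g_i$ and $\varrho$ act through the single scalar $x_i(\omega_i)\tp\vf u$'' so that the univariate contraction Lemma \ref{lemma:u-compare} suffices. This is not so. The logarithm, and hence $\varrho$, is applied \emph{after} the conditional expectation: the argument of $\varrho$ is $\delta_i(\vf u)=\mean[\rho_i(\vf u)\gv Y_i]-1$, in which $\omega_i$ has already been integrated out. Thus the symmetrized summands $\rx_i\zeta_{ih}(\vf v)$ are functions of $Y_i$ and $\vf v$ only, and there is no realization $x_i(\omega_i)\tp\vf u$ left to serve as the single scalar. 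Writing, as the paper does, $\zeta_{ih}(\vf v)=s_{ih}(\vf v)+z_{ih}\varrho(\gamma_i(\vf v))+s_{ih}(\vf v)\varrho(\gamma_i(\vf v))$, the first term is univariate in $X_i\tp\vf v$ (after Jensen) and the second is univariate in $\gamma_i(\vf v)$ with $z_{ih}$ frozen, but the cross term $s_{ih}(\vf v)\varrho(\gamma_i(\vf v))$ is a genuinely bivariate function $g(s,t)=s\varrho(t)$ of the pair $(s_{ih}(\vf v),\gamma_i(\vf v))$, two \emph{different} conditional-expectation scalars. Lemma \ref{lemma:u-compare} does not apply here; the paper invokes the bivariate comparison Theorem \ref{thm:compare0} (via Lemma \ref{lemma:bivariate-lip}, which checks that $g(s,t)$ satisfies the vanishing condition and is $(\psi_6/2,\ell_\infty)$-Lipschitz on the relevant rectangle). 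This is precisely where the constants $\psi_5,\psi_6$ and the structure of $A$ come from, and it is the reason Section \ref{sec:compare} is needed for this theorem at all. You correctly flagged the interposed conditional expectation as an obstacle, but your proposed resolution does not work.

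A smaller point: for the concentration of $W_h=\sup_{\vf u}|\sum_i\dev{-.35ex}{\zeta_{ih}(\vf v)}|$ you want the functional Hoeffding inequality \eqref{eq:f-hoeff}, not \eqref{eq:mc}. The latter would produce $2\mean W_h$ plus an extra $O(\psi_4\ln(p/q_1))$ term and would not recover the stated bound; \eqref{eq:f-hoeff} gives exactly $\mean W_h+2\psi_4\sqrt{2N\ln(p/q_1)}$ after the union bound over $h\le p$.
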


To see how Theorem \ref{thm:hidden} may be used, consider the following
Lasso type estimator
\begin{align} \label{eq:lasso-hidden}
  \est\vtheta = \argf\min_{\vf u\in D}
  \Cbr{
    \ell(\vf u) + \lambda\normx{\vf u}_1
  },
\end{align}
where $\lambda>0$ is a tuning parameter.   The next result is in the
same spirit as Theorem \ref{thm:lasso} and actually simpler, as no
design matrices are involved.  Furthermore, it holds in a more general
setting than the hidden variable case.
\begin{prop} \label{prop:hidden}
  Let $\ell(\vf u)$ be a stochastic process indexed by $\vf u\in
  D\subset \Reals^p$.  Fix $\vtheta\in D$.  Let
  $S:=|\sppt(\vtheta)|\le p/2$ and $q\in (0,1)$.  Suppose the
  following two conditions are satisfied.
  \begin{itemize}
  \item[1)] There is a constant $C_\ell>0$, such that $\mean \ell(\vf
    u) - \mean\ell(\vtheta)\ge C_\ell\normx{\vf u-\vtheta}_2^2$.
  \item[2)] There is $M_q>0$, such that 
    $$
    \prob\Cbr{
      \sup_{\vf u\in D\setminus\{\vtheta\}} 
      \frac{|\dev{-.35ex}{\ell(\vf u) - \ell(\vtheta)}|}
      {\normx{\vf u-\vtheta}_1}
      \ge M_q
    } \le q.
    $$
  \end{itemize}
  Given $K>1$, let $\lambda = K M_q$ in \eqref{eq:lasso-hidden}.  Then
  w.p.~at least $1-q$,
  $$
  \normx{\est\vtheta-\vtheta}_2 \le
  \sqrt{2+\frac{(K+1)^2}{(K-1)^2}}  \frac{(K+1) M_q
    \sqrt{S}}{C_\ell}.
  $$
\end{prop}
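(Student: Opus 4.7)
The approach is to adapt the standard Lasso analysis of Bickel--Ritov--Tsybakov, taking advantage of the fact that Condition 1 places the quadratic lower bound directly on $\mean\ell$ rather than on a design matrix. This eliminates the need for a restricted eigenvalue condition and makes the proof structurally cleaner than that of Theorem \ref{thm:lasso}. I would proceed in three stages.

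First, I would derive a basic inequality from the optimality of $\est\vtheta$. Starting from $\ell(\est\vtheta)+\lambda\normx{\est\vtheta}_1 \le \ell(\vtheta)+\lambda\normx{\vtheta}_1$, I would center by $\mean$ and invoke the high-probability event from Condition 2 to bound the fluctuation term $|\dev{-.35ex}{\ell(\est\vtheta)-\ell(\vtheta)}|$ by $M_q\normx{\est\vtheta-\vtheta}_1$. With $\lambda=KM_q$ this yields, with probability at least $1-q$,
$$
\mean\ell(\est\vtheta)-\mean\ell(\vtheta) \le M_q\normx{\est\vtheta-\vtheta}_1 + KM_q(\normx{\vtheta}_1-\normx{\est\vtheta}_1).
$$

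Next, I would split along $J=\sppt(\vtheta)$ (so $|J|=S$). The triangle inequalities $\normx{\vtheta}_1-\normx{\est\vtheta}_1 \le \normx{\pi_J(\est\vtheta-\vtheta)}_1 - \normx{\pi_{J^c}\est\vtheta}_1$ and $\normx{\est\vtheta-\vtheta}_1 = \normx{\pi_J(\est\vtheta-\vtheta)}_1 + \normx{\pi_{J^c}\est\vtheta}_1$ combine to give
$$
\mean\ell(\est\vtheta)-\mean\ell(\vtheta) \le (K+1)M_q\normx{\pi_J(\est\vtheta-\vtheta)}_1 - (K-1)M_q\normx{\pi_{J^c}\est\vtheta}_1.
$$
Since Condition 1 forces the left side to be nonnegative, this simultaneously produces the cone condition $\normx{\pi_{J^c}\est\vtheta}_1 \le \frac{K+1}{K-1}\normx{\pi_J(\est\vtheta-\vtheta)}_1$ and, after applying Cauchy--Schwarz on $\normx{\pi_J(\est\vtheta-\vtheta)}_1 \le \sqrt{S}\,\normx{\pi_J(\est\vtheta-\vtheta)}_2$, the quadratic-linear inequality $C_\ell\normx{\est\vtheta-\vtheta}_2^2 \le (K+1)M_q\sqrt{S}\,\normx{\pi_J(\est\vtheta-\vtheta)}_2$.

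Finally, I would convert the quadratic-linear inequality into a bound on the full $\ell_2$-error using the Candes--Tao truncation already employed in Theorem \ref{thm:lasso}: take $B=J\cup B_1$, where $B_1$ indexes the $S$ largest entries of $|\pi_{J^c}\est\vtheta|$, which is permissible because $2S\le p$. The cone condition then yields $\normx{\pi_{B^c}\est\vtheta}_2 \le \normx{\pi_{J^c}\est\vtheta}_1/\sqrt{S} \le \frac{K+1}{K-1}\normx{\pi_J(\est\vtheta-\vtheta)}_2$. Feeding this into the orthogonal decomposition $\normx{\est\vtheta-\vtheta}_2^2 = \normx{\pi_B(\est\vtheta-\vtheta)}_2^2 + \normx{\pi_{B^c}\est\vtheta}_2^2$ and combining with the quadratic-linear inequality gives the stated bound. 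No step is conceptually difficult; the whole argument is essentially the $k=1$ specialization of Theorem \ref{thm:lasso} stripped of the design-matrix bookkeeping. The only mildly delicate point is tracking how the constant $\sqrt{2+(K+1)^2/(K-1)^2}$ emerges from combining the bound on $\normx{\pi_J(\est\vtheta-\vtheta)}_2$ with the two Candes--Tao contributions to $\normx{\pi_B(\est\vtheta-\vtheta)}_2^2$ and $\normx{\pi_{B^c}\est\vtheta}_2^2$.
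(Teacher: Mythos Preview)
Your proposal is correct and follows essentially the same route as the paper's proof: derive the basic inequality from optimality and Condition~2, split along $A=\sppt(\vtheta)$ to obtain the cone condition and a quadratic-linear bound, then invoke the Cand\`es--Tao truncation with the enlarged set $B$ of size $2S$ to control $\normx{\pi_{B^c}\est\vtheta}_2$. The only point you leave implicit is that, to obtain the factor $2$ in the constant $\sqrt{2+(K+1)^2/(K-1)^2}$, the basic inequality must also be applied with $J=B$ (giving $\normx{\pi_B(\est\vtheta-\vtheta)}_2\le (K+1)a\sqrt{2S}$), exactly as the paper does; your phrase ``two Cand\`es--Tao contributions'' slightly obscures this, since the bound on $\normx{\pi_B(\est\vtheta-\vtheta)}_2$ comes from the enlarged support, not from the truncation lemma itself.
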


Proposition \ref{prop:hidden} requires two conditions.  On the one
hand, Theorem \ref{thm:hidden} can be used to derive Condition 2).
On the other, Condition 1) requires extra assumptions to establish.
In the context of hidden variables, since
\begin{align} \label{eq:hidden-mean}
  \mean\Sbr{\ell(\vf u) - \ell(\vtheta)}
  =
  -\sm i N \mean\Sbr{
    \ln \mean\Sbr{
      \frac{g_i(x_i(\omega_i)\tp\vf u)}{g_i(x_i(\omega_i)\tp\vtheta)}
      \ \vline\ Y_i
    }
  }
  +
  \sm i N \ln \mean\Sbr{
    \frac{g_i(x_i(\omega_i)\tp\vf u)}{g_i(x_i(\omega_i)\tp\vtheta)}
  },
\end{align}
we need some assumptions on the structure of $(\omega_i, Y_i)$.  We
next consider a case in which both $\omega_i$ and $Y_i$ are
processes.  One could have a hidden Markov model in mind, with
$\omega_i$ the hidden Markov process and $Y_i$ the observations.

Suppose $(\omega_i, Y_i)$ are i.i.d.\ and for each $i\le N$,
$\omega_i = (\omega_{i1}, \ldots, \omega_{in})$ and $Y_i=(Y_{i1},
\ldots, Y_{in})$ are jointly distributed processes, such that all
$\omega_{it}$ take values in a common alphabet
$A=\{1,\ldots, 1+L\}$, and conditioning on $\omega_i$, $Y_{i1}$, \ldots,
$Y_{in}$ are independent with $Y_{it} \sim N(\omega_{it}, \sigma^2)$.
Suppose $\omega_i$ follows a tilted version of a 
baseline distribution $\pi_0(z)$
$$
\pi(z\gv\vtheta) \propto \pi_0(z) \exp\Cbr{
  \sm tn\sm aL \cf{z_i = a} \theta_{ia}
}, \quad z=(\eno z n)\in A^n.
$$
Note that while $A$ has $L+1$ different letters, to make sure the 
identifiability of $\theta_{ia}$, only $L$ parameters are associated
with each $t\le n$.

Let $\mu_i = \pi_0$ and $\nu_i$ the Lebesgue measure on $\Reals$.  For
$i\le N$, let 
$k_i(z,y) = \prod_{t\le n} f((z_t-y_t)/\sigma)$ with $f$ the density
of $N(0,1)$, $x_i(z)\in \{0,1\}^{nL}$ with
the $((t-1)L+a)$-th entry equal to $\cf{z_t=a}$, and
$\vtheta=(\theta_{11}, \ldots, \theta_{1L}, \ldots, \theta_{n1},
\ldots, \theta_{nL})$.  Finally, let $g_i(x) = e^x$.  Then the above
model can be formulated as in \eqref{eq:hidden-joint}.  Denote $X_i =
x_i(\omega_i)$.  By
\eqref{eq:hidden-mean}, 
\begin{align*}
  \mean\Sbr{\ell(\vf u) - \ell(\vtheta)}
  =
  N\Sbr{
    \ln \mean e^{X_1\tp(\vf u-\vtheta)}-
    \mean\ln \mean(e^{X_1\tp(\vf u-\vtheta)}\ \vline\ Y_1)
  }.
\end{align*}

Suppose that it is known that $\vtheta\in D$, where $D\in \Reals^{nL}$
is a bounded set.  As discussed earlier, the concern here is Condition
1) in Proposition \ref{prop:hidden}.  We can make the following
assertion.
\begin{prop} \label{prop:hmm}
  Suppose $\pi_0(z)>0$ for all $z\in A^n$.  Then Condition 1) of
  Theorem \ref{thm:hidden} is satisfied.
\end{prop}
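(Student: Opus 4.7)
The starting point will be the identity $\mean[\ell(\vf u)-\ell(\vtheta)] = N\cdot\mathrm{KL}(p_\vtheta,p_{\vf u})$, where $p_{\vf u}$ is the marginal density of $Y_1$ under parameter $\vf u$. I would derive this from \eqref{eq:hidden-mean} by using $\mean_\vtheta e^{X_1\tp(\vf u-\vtheta)} = Z(\vf u)/Z(\vtheta)$ and $\mean_\vtheta[e^{X_1\tp(\vf u-\vtheta)}\gv Y_1=y] = p_{\vf u}(y)Z(\vf u)/(p_\vtheta(y)Z(\vtheta))$, so that the two logarithmic terms combine into $-\mean_\vtheta\ln(p_{\vf u}(Y_1)/p_\vtheta(Y_1))$. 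The problem thus reduces to proving a quadratic lower bound on $\mathrm{KL}(p_\vtheta,p_{\vf u})$ on the closure $\bar D$, which is compact since $D$ is bounded.

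The plan is then to combine identifiability, strict positive definiteness of the Fisher information $I(\vtheta)$, and compactness. For identifiability, note that $p_{\vf u}(y) = \sigma^{-n}\sum_{z\in A^n}\pi(z\gv\vf u)\prod_t f((z_t-y_t)/\sigma)$ is a mixture of $(L+1)^n$ Gaussian product densities whose mean vectors $z\in\Reals^n$ are pairwise distinct. By the classical linear independence of Gaussian densities with distinct means, $p_{\vf u}=p_\vtheta$ forces $\pi(\cdot\gv\vf u)=\pi(\cdot\gv\vtheta)$; taking logs and using $\pi_0(z)>0$ for every $z$ shows $x(z)\tp(\vf u-\vtheta)$ is constant in $z\in A^n$, and comparing, for each $t\le n$, the value at $z_t=L+1$ (which contributes $0$) with those at $z_t\le L$ forces $\vf u=\vtheta$. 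The same Gaussian-independence trick applied to the score at $\vtheta$ will show that $\vf v\tp I(\vtheta)\vf v=0$ implies $\vf v\tp(x(z)-\mean_\vtheta X_1)=0$ for every $z$, and hence $\vf v=0$, so $I(\vtheta)\succ 0$.

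Setting $F(\vf u):=\mathrm{KL}(p_\vtheta,p_{\vf u})$, I would use $F(\vtheta)=0$, $\nabla F(\vtheta)=0$, and $\nabla^2 F(\vtheta)=I(\vtheta)\succ 0$ to obtain via Taylor expansion $F(\vf u)\ge c_1\normx{\vf u-\vtheta}_2^2$ on some ball around $\vtheta$. On the complement of this ball inside $\bar D$, continuity and identifiability give a uniform positive lower bound $c_2>0$, which converts via $\normx{\vf u-\vtheta}_2\le R_D$ into the desired quadratic bound there. Multiplication by $N$ yields Condition 1 of Proposition~\ref{prop:hidden} with $C_\ell=N\min(c_1,c_2/R_D^2)$. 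The main technical hurdle is the strict positive definiteness of the Fisher information: the assumption $\pi_0>0$ together with the alphabet structure (one parameter $\theta_{ta}$ per $(t,a)$ with $a\le L$, using the reference state $a=L+1$ to anchor) is exactly what closes the non-degeneracy argument.
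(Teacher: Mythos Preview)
Your proposal is correct and follows essentially the same three-step architecture as the paper: identifiability ($\mean\ell(\vf u)>\mean\ell(\vtheta)$ for $\vf u\ne\vtheta$), positive definiteness of the Hessian at $\vtheta$, and a compactness argument to patch the local Taylor bound with the global strict inequality. The only notable difference is packaging: you invoke the KL/Fisher-information formalism and cite the linear independence of Gaussians with distinct means as a classical fact, whereas the paper computes the Hessian directly as $H=\var(\mean(X_1\gv Y_1))$ and proves the identifiability step from scratch via a Laplace-transform argument (the equation $\sum_{z}e^{y\tp z}p(z)=0$ for all $y$ forces $p\equiv 0$), which is precisely one way to prove the Gaussian-independence fact you cite.
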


\subsection{Proof of Theorem \ref{thm:hidden}}
For notational easy, denote
\begin{align*}
  \vf v = \vf u-\vtheta, \quad X_i = x_i(\omega_i), \quad
  \mean_i(\cdot) = \mean(\cdot\gv Y_i)
\end{align*}
and
\begin{align*}
  \gamma_i(\vf v) = \mean_i\Sbr{
    \frac{g_i(X_i\tp(\vtheta+\vf v))}{g_i(X_i\tp\vtheta)}
  } -1, \quad
  \gamma(\vf v) = \mean\Sbr{
    \frac{g_i(X_i\tp(\vtheta+\vf v))}{g_i(X_i\tp\vtheta)}
  } -1.
\end{align*}
Note that, because $\vtheta$ is fixed even though unknown,
$\gamma_i(\vf v)$ is a random function only dependent on $\vf v$ and
$Y_i$, while $h(\vf v)$ is a nonrandom function only dependent on $\vf
v$.  Then 
\begin{align} \label{eq:hidden-lik}
  \ell(\vf u) - \ell(\vtheta)
  = -\sm i N \gamma_i(\vf v) (\varrho(\gamma_i(\vf v))+1)
  +
  \sm i N \gamma(\vf v) (\varrho(\gamma(\vf v))+1).
\end{align}
Define functions
\begin{align} \label{eq:hidden-def3}
  \lambda_i(s) = (\ln g_i)'(s) = \frac{g_i'(s)}{g_i(s)}, \quad
  \varphi_i(s,t)
  =
  \begin{cases}
    \displaystyle
    \frac{g_i(s+t)-g_i(s)}{
      g_i(s)t} - \lambda_i(s)
    & t\not=0 \\[2ex]
    0 & t=0,
  \end{cases}
\end{align}
of $s$, $t\in\Reals$.  Define $\vf z_i = (z_{i1}, \ldots, z_{ip})$ and
$s_i(\vf v) = (s_{i1}(\vf v), \ldots, s_{ip}(\vf v))$ with
\begin{align} \label{eq:hidden-s-decomp}
  z_{ih} = \mean_i\Sbr{\lambda_i(X_i\tp\vtheta) X_{ih}},
  \quad
  s_{ih}(\vf v) = \mean_i\Sbr{\varphi_i(X_i\tp\vtheta,X_i\tp\vf v)
    X_{ih}}, \quad i\le N, \ h\le p.
\end{align}
Note that each $z_{ih}$ is a function only in $Y_i$, and each
$s_{ih}(\vf v)$ is a function only in $\vf v$ and $Y_i$.  Then
\begin{align} \label{eq:hidden-h-decomp}
  \gamma_i(\vf v)
  = \mean_i\Sbr{
    \lambda_i(X_i\tp\vtheta) X_i +
    \varphi_i(X_i\tp\vtheta,X_i\tp\vf v) X_i
  }\tp \vf v = 
  (\vf z_i + s_i(\vf v))\tp \vf v
\end{align}
which combined with \eqref{eq:hidden-lik} yields
\begin{align*}
  \dev{-.35ex}{\ell(\vf u) - \ell(\vtheta)}
  = -\sm iN \dev{-.45ex}{(1+ \varrho(\gamma_i(\vf v)))(\vf z_i
    + s_i(\vf v))}\tp\vf v.
\end{align*}
For $i\le N$ and $h\le p$, write $\zeta_{ih}(\vf v)= s_{ih}(\vf v) +
z_{ih} \varrho(\gamma_i(\vf v)) + s_{ih} (\vf v)\varrho(\gamma_i(\vf
v))$ so that the above equation can be written as
\begin{align*}
  \dev{-.35ex}{\ell(\vf u) - \ell(\vtheta)}
  = -\sm h p \Grp{\sm iN \dev{-.45ex}{z_{ih}}} v_h
  - \sm h p \Grp{\sm iN\dev{-.45ex}{\zeta_{ih}(\vf v)}} v_h.
\end{align*}
Define for $h\le p$,
$$
W_h = \sup_{\vf u\in D} \Abs{\sm i N \dev{-.35ex}{\zeta_{ih}(\vf v)}}.
$$
Then
\begin{align}
  \sup_{\vf u\in D\setminus\{\vtheta\}}
  \frac{|\dev{-.35ex}{\ell(\vf u) - \ell(\vtheta)}|}
  {\normx{\vf u-\vtheta}_1}
  \le \mx hp \Abs{\sm iN \dev{-.35ex}{z_{ih}}} + \mx hp W_h.
  \label{eq:hidden-lip}
\end{align}

\begin{lemma} \label{lemma:hidden}
  (1) W.p.~1, the following inequalities hold simultaneously,
  \begin{gather*}
    \Abs{\lambda_i(X_i\tp\vtheta)}
    \le \psi_1, \qquad
    \frac{g_i(X_i\tp\vf u)}{g_i(X_i\tp\vtheta)} \in  I_g.
  \end{gather*}
  (2) $\varphi_i(s,0)\equiv 0$ and w.p.~1, for all $s\in\Reals$, $i\le
  N$ and $h\le p$, $\varphi_i(s, \cdot) X_{ih}$ is $\psi_2$-Lipschitz
  and $\Abs{\varphi_i(s, X_i\tp(\vf u-\vtheta))}\le \psi_3$ for
  all $\vf u\in D$.
\end{lemma}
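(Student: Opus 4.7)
\medbreak\noindent\textbf{Proof proposal for Lemma \ref{lemma:hidden}.}
The plan is to reduce everything to the standing inequalities on $g_i$ in Assumption \ref{a:hidden} together with the coordinate bound $|X_{ih}|\le M_X$; the key device for part (2) is a single integral representation of $\varphi_i(s,t)$ that handles the piecewise definition, the uniform bound, and the Lipschitz bound all at once.

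Part (1) is essentially a direct computation. Since $A_g\le g_i\le B_g$ and $|g_i'|\le F_1$ pointwise, for any $i\le N$ and any realization of $X_i$,
\[
  |\lambda_i(X_i\tp\vtheta)|
  =\frac{|g_i'(X_i\tp\vtheta)|}{g_i(X_i\tp\vtheta)}\le\frac{F_1}{A_g}=\psi_1,
\]
and $g_i(X_i\tp\vf u)/g_i(X_i\tp\vtheta)\in[A_g/B_g,B_g/A_g]=I_g$ for every $\vf u\in D$. Both statements hold w.p.\ 1 uniformly in $i$ and $\vf u$.

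For part (2), the definition gives $\varphi_i(s,0)=0$ by fiat. The key observation is that for $t\neq 0$,
\[
  \frac{g_i(s+t)-g_i(s)}{t}=\int_0^1 g_i'(s+ut)\,\dd u,
\]
so that $\varphi_i(s,t)=g_i(s)^{-1}\int_0^1[g_i'(s+ut)-g_i'(s)]\,\dd u$, and this extends continuously through $t=0$ to the value $0$ given in \eqref{eq:hidden-def3}. From the $F_2$-Lipschitzness of $g_i'$ I then read off, for all $s$ and all $t_1,t_2\in\Reals$,
\[
  |\varphi_i(s,t_1)-\varphi_i(s,t_2)|
  \le\frac{1}{A_g}\int_0^1 F_2 u\,|t_1-t_2|\,\dd u
  =\frac{F_2}{2A_g}\,|t_1-t_2|,
\]
which, after multiplying by $|X_{ih}|\le M_X$, shows that $\varphi_i(s,\cdot)X_{ih}$ is $\psi_2$-Lipschitz. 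For the uniform bound, applying the same integral representation to the pair $(t_1,t_2)=(t,0)$ gives $|\varphi_i(s,t)|\le F_2|t|/(2A_g)$, while the crude estimate $|g_i(s+t)-g_i(s)|\le F_1|t|$ together with $|\lambda_i|\le F_1/A_g$ from part (1) yields $|\varphi_i(s,t)|\le 2F_1/A_g$. Substituting $t=X_i\tp(\vf u-\vtheta)$ and using $|X_i\tp(\vf u-\vtheta)|\le\normx{X_i}_\infty\normx{\vf u-\vtheta}_1\le M_X R_D$, the minimum of the two bounds gives
\[
  |\varphi_i(s,X_i\tp(\vf u-\vtheta))|
  \le\frac{\min(2F_1,F_2 M_X R_D/2)}{A_g}=\psi_3,
\]
uniformly in $s$, $i$ and $\vf u\in D$, as required.

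There is really no obstacle here; the only point that needs mild care is to ensure that the Lipschitz bound in $t$ is valid across the removable singularity at $t=0$, which is exactly what the integral representation is for. Everything else is dimensional bookkeeping tying $F_1,F_2,A_g,M_X,R_D$ to the constants $\psi_1,\psi_2,\psi_3$ used later in \eqref{eq:hidden-lip}.
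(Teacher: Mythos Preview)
Your proposal is correct and follows essentially the same route as the paper: part (1) is read off directly from Assumption~\ref{a:hidden}, and part (2) hinges on the integral representation $\varphi_i(s,t)=g_i(s)^{-1}\int_0^1[g_i'(s+ut)-g_i'(s)]\,\dd u$, from which the $F_2/(2A_g)$ Lipschitz constant, the $2F_1/A_g$ uniform bound, and hence the $\psi_2$- and $\psi_3$-bounds all follow exactly as you wrote. The only cosmetic difference is that the paper pulls the $2F_1/A_g$ bound straight from the integral representation rather than from the difference-quotient form, but the two arguments are equivalent.
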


Given $h\le p$, from Lemma \ref{lemma:hidden}, w.p.~1, for all $i\le
N$, $|z_{ih}|\le \psi_1 M_X$.  Then by union-sum inequality and Hoeffding
inequality,
$$
\prob\Cbr{\mx h p\Abs{\sm i N \dev{-.35ex}{z_{ih}}}
  \ge \sqrt{N} \psi_1 M_X t}\le
\sm h p \prob\Cbr{\Abs{\sm i N \dev{-.35ex}{z_{ih}}} \ge \sqrt{N}
  \psi_1 M_X t} \le 2 p e^{-t^2/2}.
$$
Letting $t=\sqrt{2 \ln(2p/q_0)}$ then yields the following bound on
the first term in \eqref{eq:hidden-lip}
\begin{align} \label{eq:hidden-alpha}
  \prob\Cbr{\mx h p\Abs{\sm i N \dev{-.35ex}{z_{ih}}}
    \ge \psi_1 M_X \sqrt{2N\ln(2p/q_0)}}\le q_0.
\end{align}

Given $h$, from Lemma \ref{lemma:hidden}, w.p.~1, for all $i\le N$ and
$\vf v = \vf u-\vtheta$ with $\vf u\in D$, $|\zeta_{ih}(\vf v)|\le
\psi_4$, so $-\psi_4-\mean\zeta_{ih}(\vf v) \le
\dev{-.35ex}{\zeta_{ih}(\vf v)}\le \psi_4 -\mean\zeta_{ih}(\vf
v)$.  Then by Lemma \ref{lemma:mc}, inequality \eqref{eq:f-hoeff},
$$
\Pr\Cbr{W_h > \mean W_h + 2\psi_4\sqrt{2Ns}} \le
e^{-s}.
$$
By union-sum inequality, it follows that
\begin{align} \label{eq:hidden-xi}
  \Pr\Cbr{\mx h p W_h > \mx h p \mean W_h + 2\psi_4 \sqrt{2N\ln
      (p/q_1)}} \le  q_1.
\end{align}
  
We need to bound $\mean W_h$ for each $h\le p$.  Since $\xi_{ih}(\vf
v)$ are continuous in $\vf v$ and bounded, by dominated convergence
argument, we can apply symmetrization (\cite{ledoux:91}, Lemma 6.3) to
get
\begin{align} 
  \frac{\mean W_h}{2}
  &\le
  \mean \sup_{\vf u\in D} \Abs{\sm i N \rx_i\zeta_{ih}(\vf v)}
  \le
  \mean \sup_{\vf u\in D} \Abs{\sm i N \rx_i s_{ih}(\vf v)} 
  + L_h\Sp 1 + L_h\Sp 2,
  \label{eq:hidden-symm}
\end{align}
where $\eno\rx N$ are i.i.d.\ Rademacher variables independent of
$(\omega_i, Y_i)$, and
\begin{align*}
  L_h\Sp 1 = \mean \sup_{\vf u\in D} \Abs{\sm i N \rx_i     
    z_{ih} \varrho(\gamma_i(\vf v))
  },
  \quad
  L_h\Sp 2 = \mean \sup_{\vf u\in D}
  \Abs{\sm i N \rx_i  s_{ih}(\vf v)\varrho(\gamma_i(\vf v))},
\end{align*}
Define
\begin{align*}
  U_h = \sup_{\vf u\in D}
  \Abs{\sm i N \rx_i \varphi_i(X_i\tp\vtheta,X_i\tp\vf v)
    X_{ih}
  }, \quad
  Q = \sup_{\vf u\in D} \Abs{\sm i N \rx_i \gamma_i(\vf u-\vtheta)}.
\end{align*}
To continue, we need the following result.
\def\EUh{2\sqrt{2\ln (2p)} \psi_2 R_D \mean S_X}
\def\EQ{R_D(\sqrt{2\ln(2p)} \psi_1\mean S_X + M_U)}
\begin{lemma} \label{lemma:Uh-Q}
  For each $h\le p$,
  \begin{align} \label{eq:s-Uh}
    \mean \sup_{\vf u\in D} 
    \Abs{
      \sm i N \rx_i s_{ih}(\vf v)
    } \le \mean U_h \le \EUh.
  \end{align}
  Furthermore, let
  $$
  M_U=
  2\sqrt{2} \Sbr{\sqrt{\ln(2p)} \psi_2 R_D 
    +\psi_3 (\sqrt{\ln p}+1)}
  \mean S_X.
  $$
  Then
  $$
  \mean\mx h p U_h \le  M_U, \quad
  \mean Q \le \EQ.
  $$
\end{lemma}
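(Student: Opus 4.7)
The plan is to prove the four assertions in sequence: the first two via contraction-style inequalities, and the last two by decompositions that separate a deterministic contribution from a subgaussian fluctuation. For the first inequality, I would exploit independence. Since $(\omega_i,Y_i)$ are independent across $i$ and $\eno\rx N$ are independent of $(\omega,Y)$, one has
$$
\sm i N \rx_i s_{ih}(\vf v) = \mean\Sbr{\sm i N \rx_i \varphi_i(X_i\tp\vtheta, X_i\tp\vf v) X_{ih}\mid Y, \rx}.
$$
Jensen's inequality, followed by pushing the supremum over $\vf u$ inside the conditional expectation, yields $\sup_{\vf u\in D}\Abs{\sm iN \rx_i s_{ih}(\vf v)}\le \mean[U_h\mid Y,\rx]$, and taking the full expectation gives the first claim.

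For $\mean U_h$, I would condition on $\omega$ and view $t\mapsto \varphi_i(X_i\tp\vtheta, t) X_{ih}$ as a $\psi_2$-Lipschitz function vanishing at $0$ by Lemma~\ref{lemma:hidden}(2). The univariate contraction Lemma~\ref{lemma:u-compare} reduces the Rademacher process to one indexed through the linear functionals $X_i\tp(\vf u-\vtheta)$, at a cost of $2\psi_2$. Bounding $\Abs{\sm iN \rx_i X_i\tp(\vf u-\vtheta)} \le R_D\mx h p\Abs{\sm iN \rx_i X_{ih}}$ and invoking Lemma~\ref{lemma:massart} gives $2\sqrt{2\ln(2p)}\,\psi_2 R_D S_X$ pointwise in $\omega$, which integrates to the stated bound.

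For $\mean\mx hp U_h$, I would decompose $\mx hp U_h \le \mx hp \mean_\rx U_h + \mx hp(U_h - \mean_\rx U_h)$. The first piece inherits the uniform-in-$h$ estimate from the previous step and contributes $2\sqrt{2\ln(2p)}\,\psi_2 R_D\mean S_X$ after averaging. For the second, conditional on $\omega$ the summand $\rx_i \varphi_i(X_i\tp\vtheta, X_i\tp\vf v) X_{ih}$ is bounded by $\psi_3|X_{ih}|$ (Lemma~\ref{lemma:hidden}(2)), so the functional Hoeffding inequality~\eqref{eq:f-hoeff} gives the centered subgaussian tail $\prob_\rx(U_h - \mean_\rx U_h \ge t) \le \exp(-t^2/(8\psi_3^2 S_X^2))$, using $\sm iN X_{ih}^2\le S_X^2$. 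A standard maximal inequality for centered subgaussians then produces $\mean_\rx\mx hp(U_h-\mean_\rx U_h) \le 2\sqrt{2}\,\psi_3(\sqrt{\ln p}+1) S_X$, and combining with the first piece after averaging over $\omega$ yields $M_U$. Finally, for $\mean Q$, I would use~\eqref{eq:hidden-h-decomp} to write $\gamma_i(\vf v) = (\vf z_i + s_i(\vf v))\tp \vf v$ and apply $\ell_1/\ell_\infty$ H\"older to bound $\Abs{\sm iN \rx_i \gamma_i(\vf v)}$ by $R_D\mx hp\Abs{\sm iN \rx_i z_{ih}} + R_D\mx hp\Abs{\sm iN \rx_i s_{ih}(\vf v)}$. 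The first maximum is handled by the Step~1 conditional-expectation trick applied to $\lambda_i(X_i\tp\vtheta)X_{ih}$ (bounded by $\psi_1|X_{ih}|$ by Lemma~\ref{lemma:hidden}(1)) together with Lemma~\ref{lemma:massart}, giving $\sqrt{2\ln(2p)}\,\psi_1\mean S_X$; the second, after taking the sup over $\vf u$, is dominated by $\mean\mx hp U_h \le M_U$ via the same trick.

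The main obstacle is the bound on $\mean\mx hp U_h$: the direct contraction estimate for each $\mean U_h$ and any crude uniform bound on the summand are too lossy once one takes the maximum in $h$. The key is to apply the functional Hoeffding inequality after conditioning on $\omega$ with the sharper per-coordinate $\ell_2$ bound $\sqrt{\sm iN X_{ih}^2}\le S_X$, so that the resulting subgaussian scale $\psi_3 S_X$ contributes only a $\sqrt{\ln p}$ factor rather than $\sqrt N$.
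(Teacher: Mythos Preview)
Your proposal is correct and follows essentially the same route as the paper: the Jensen/conditional-expectation trick for the first inequality, conditional contraction plus Lemma~\ref{lemma:massart} for $\mean U_h$, functional Hoeffding conditionally on $\omega$ combined with a union bound over $h$ and the integration device of Lemma~\ref{lemma:mean-max} for $\mean\mx hp U_h$, and the $\ell_1/\ell_\infty$ split of $\gamma_i(\vf v)$ via \eqref{eq:hidden-h-decomp} for $\mean Q$. The only cosmetic difference is that the paper bundles the uniform bound on $\mean_\rx U_h$ directly into the tail inequality before taking the union bound and invoking Lemma~\ref{lemma:mean-max}, whereas you first separate $\mx hp U_h$ into $\mx hp\mean_\rx U_h$ plus the centered fluctuation; both arrive at the same constant $M_U$.
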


To bound $L_h\Sp 1$, by Fubini theorem,
$$
L_h\Sp 1 = \mean_Y \mean_\rx \sup_{\vf u\in D} \Abs{\sm i N \rx_i
  z_{ih}\varrho(\gamma_i(\vf v))
}
$$
Given $\eno Y N$, $z_{ih}$ are fixed and $\gamma_i(\vf v)$ become
nonrandom function in $\vf v=\vf u-\vtheta$.  By Lemma
\ref{lemma:hidden}, the nonrandom function $t \to
z_{ih}\varrho(t)$ maps 0 to 0 and is  $(\psi_5/2)$-Lipschitz.
Then by Lemma \ref{lemma:u-compare},
\begin{align}
  L_h\Sp 1 \le \psi_5 \mean_Y \mean_\rx \sup_{\vf u\in D}
  \Abs{\sm iN \rx_i \gamma_i(\vf v)} = \psi_5 \mean Q.
  \label{eq:L-h1}
\end{align}

To bound $L_h\Sp 2=\sup_{\vf u\in D} \Abs{\sm i N \rx_i s_{ih}(\vf
  v)\varrho(\gamma_i(\vf v))}$, we have to use the multivariate
comparison results in Section \ref{sec:compare}.  Given $\eno Y N$,
both $s_{ih}(\vf v)$ and $\gamma_i(\vf v)$ with $\vf v = \vf
u-\vtheta$ are nonrandom functions of $\vf u\in D$.  Let $g(s,t) = s
\varrho(t)$ for $i\le N$ and
$$
T = \Cbr{\vf t=(\eno{\vf t} N): \vf t_i = (s_{ih}(\vf v), \gamma_i(\vf
  v)),\ i\le N,\ \vf v=\vf u-\vtheta, \ \vf u\in D}.
$$
Then 
$$
\mean_\rx \sup_{\vf u\in D} \Abs{\sm i N \rx_i
  s_{ih}(\vf v)\varrho(\gamma_i(\vf v))
}
= \mean_\rx \sup_{\vf t\in T} \Abs{\sm i N \rx_i g(\vf t_i)}.
$$

By Lemma \ref{lemma:hidden}, w.p.~1, for all $i\le N$, $h\le p$, and
$\vf u\in D$, $(s_{ih}(\vf v), \gamma_i(\vf v))\in J$, where
$$
J=\Cbr{(s,t-1): |s|\le \psi_3 M_X,\ t\in I_g}.
$$

\begin{lemma} \label{lemma:bivariate-lip}
  $g(s,t)$ is $(\psi_6/2, \ell_\infty)$-Lipschitz on $J$.
  Furthermore, $|g(s,t)|\le (\psi_6/2) \min(|s|, |t|)$.
\end{lemma}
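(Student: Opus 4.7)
The plan is to verify both claims by direct calculation, using only the definitions of $\varrho_0$, $\varrho_1$, and $\psi_6 = 2(\varrho_0 + \psi_3 M_X \varrho_1)$, together with the constraints on $J$. Observe first that the second coordinate of any point in $J$ lies in $I_g - 1 = [A_g/B_g - 1,\, B_g/A_g - 1]$, and since $A_g \le B_g$, this interval contains $0$. Since $\varrho$ is smooth on $(-1,\infty)$ with $\varrho(0) = 0$, the mean value theorem applied on $I_g - 1$ gives $|\varrho(t)| \le \varrho_1 |t|$ for all $t$ in this range, and we also have the uniform bound $|\varrho(t)| \le \varrho_0$ directly from the definition of $\varrho_0$.

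For the Lipschitz statement, given $(s_1, t_1), (s_2, t_2) \in J$, I would use the triangle-inequality decomposition
\begin{align*}
  |g(s_1, t_1) - g(s_2, t_2)|
  &= |s_1 \varrho(t_1) - s_2 \varrho(t_2)| \\
  &\le |s_1|\cdot|\varrho(t_1) - \varrho(t_2)| + |\varrho(t_2)|\cdot|s_1 - s_2|.
\end{align*}
The first term is bounded by $\psi_3 M_X \varrho_1 |t_1 - t_2|$ using $|s_1| \le \psi_3 M_X$ and $|\varrho'| \le \varrho_1$ on $I_g - 1$; the second by $\varrho_0 |s_1 - s_2|$. Summing gives an upper bound of $(\varrho_0 + \psi_3 M_X \varrho_1) \normx{(s_1-s_2, t_1-t_2)}_\infty = (\psi_6/2) \normx{(s_1-s_2, t_1-t_2)}_\infty$, as required.

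For the second claim, I would produce two separate upper bounds on $|g(s,t)| = |s|\cdot|\varrho(t)|$. The uniform bound $|\varrho(t)| \le \varrho_0 \le \psi_6/2$ gives $|g(s,t)| \le (\psi_6/2)|s|$. The MVT bound $|\varrho(t)| \le \varrho_1 |t|$ combined with $|s| \le \psi_3 M_X$ yields $|g(s,t)| \le \psi_3 M_X \varrho_1 |t| \le (\psi_6/2)|t|$, since $\psi_3 M_X \varrho_1 \le \psi_6/2$ by definition. Taking the minimum of the two bounds gives $|g(s,t)| \le (\psi_6/2)\min(|s|, |t|)$.

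There is no real obstacle here: the proof is purely a matter of bookkeeping constants, and the only point that requires a brief justification is that $0 \in I_g - 1$ so that the MVT estimate $|\varrho(t)| \le \varrho_1 |t|$ is legitimate on the relevant range. Everything else is a straightforward application of the triangle inequality and the definitions.
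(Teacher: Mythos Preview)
Your proof is correct and follows essentially the same approach as the paper: both arguments bound the increment of $g(s,t)=s\varrho(t)$ using $|\varrho|\le\varrho_0$ and $|\varrho'|\le\varrho_1$ on $I_g-1$, and then combine to get the constant $\psi_6/2=\varrho_0+\psi_3 M_X\varrho_1$. The only cosmetic difference is that the paper applies the two-variable mean value theorem to $g$, whereas you exploit the product form directly via the triangle inequality and the one-dimensional mean value theorem; your justification that $0\in I_g-1$ (needed for the bound $|\varrho(t)|\le\varrho_1|t|$) is a detail the paper leaves implicit.
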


From Lemma \ref{lemma:bivariate-lip} and Theorem \ref{thm:compare0},
\begin{align*}
  \mean_\rx \sup_{\vf t\in T} \Abs{\sm i N \rx_i g(\vf t_i)}
  \le \psi_6\Grp{
    \mean_\rx \sup_{\vf t\in T} \Abs{\sm i N \rx_i t_{i1}}
    +
    \mean_\rx \sup_{\vf t\in T} \Abs{\sm i N \rx_i t_{i2}}
  }.
\end{align*}
Integrating over $\eno Y N$, we thus get
\begin{align} \label{eq:L-h2}
  L_h\Sp 2
  &
  \le \psi_6\Grp{
    \mean \sup_{\vf u\in D} \Abs{\sm i N \rx_i \gamma_i(\vf v)}
    +
    \mean \sup_{\vf u\in D} \Abs{\sm i N \rx_i s_{ih}(\vf v)}
  }
  \le \psi_6\mean (Q+U_h),
\end{align}
where the second inequality is due to \eqref{eq:s-Uh}.  Combine
\eqref{eq:hidden-symm}, \eqref{eq:L-h1}, \eqref{eq:L-h2} and
Lemmas \ref{lemma:Uh-Q},
\begin{align*}
  \frac{\mean W_h}{2}
  \le\
  &
  (1+\psi_6)\mean U_h + (\psi_5+\psi_6)\mean Q \\
  \le\
  &
  (1+\psi_6) \EUh + 
  (\psi_5+\psi_6)R_D(\sqrt{2\ln(2p)} \psi_1 \mean S_X
  + M_U) 
\end{align*}
Note that the bound holds for all $h\le p$.  Incorporate the bound
into \eqref{eq:hidden-xi}.  Together with \eqref{eq:hidden-lip} and
\eqref{eq:hidden-alpha}, this finishes the proof.

\subsection{Proof of Lemma \ref{lemma:Uh-Q}}

To prove the Lemma, we need the following result.
\begin{lemma} \label{lemma:mean-max}
  Suppose $\xi\ge 0$ such that for some $a,b,c\ge 0$ and $d\ge 1$,
  \begin{align} \label{eq:xi-tail}
    \prob\Cbr{\xi > a + b\sqrt{s} + c s} \le d e^{-s}, \quad s>0.
  \end{align}
  Then $\mean \xi \le a+b(\sqrt{\ln d}+1) + c(\ln d+1)$.
\end{lemma}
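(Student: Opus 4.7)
The plan is to integrate the tail bound. Write
$$
\mean\xi = \int_0^\infty \prob\{\xi > t\}\,dt,
$$
and split the integration at $t=a$. The contribution from $t\in[0,a]$ is at most $a$, using only that probabilities are bounded by $1$. For $t>a$ I would parameterize $t = a+b\sqrt{s}+cs$ with $s>0$, so that $dt = \bigl(b/(2\sqrt{s}) + c\bigr)\,ds$ and the hypothesis becomes directly applicable. (If $b=c=0$, then \eqref{eq:xi-tail} applied with $s\to\infty$ forces $\prob\{\xi>a\}=0$, and there is nothing to prove.) The result is
$$
\mean\xi \le a + \int_0^\infty \min\bigl(1,\,de^{-s}\bigr)\Bigl(\frac{b}{2\sqrt{s}} + c\Bigr)\,ds,
$$
where the $\min$ comes from combining the trivial bound $\prob\{\xi>t\}\le 1$ with \eqref{eq:xi-tail}.

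Next I would split the $s$-integral at the crossover point $s=\ln d$ (using $d\ge 1$ so that $\ln d\ge 0$). On $[0,\ln d]$ the integrand is $b/(2\sqrt{s}) + c$, and integrating gives $b\sqrt{\ln d} + c\ln d$. On $[\ln d, \infty)$ the integrand is $de^{-s}\bigl(b/(2\sqrt{s}) + c\bigr)$. The $c$-part yields
$$
c\int_{\ln d}^\infty d\,e^{-s}\,ds = c\cdot d\cdot e^{-\ln d} = c.
$$
For the $b$-part, the substitution $u = s-\ln d$ gives
$$
\frac{b}{2}\int_0^\infty \frac{e^{-u}}{\sqrt{u+\ln d}}\,du \;\le\; \frac{b}{2}\int_0^\infty \frac{e^{-u}}{\sqrt{u}}\,du \;=\; \frac{b\sqrt{\pi}}{2} \;\le\; b,
$$
where the first inequality uses $\ln d\ge 0$ and the last uses $\sqrt{\pi}/2<1$.

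Assembling the four pieces yields
$$
\mean\xi \le a + b\sqrt{\ln d} + c\ln d + b + c = a + b(\sqrt{\ln d} + 1) + c(\ln d + 1),
$$
which is exactly the claim. There is no real obstacle here; the argument is a routine tail integration, the only mildly delicate step being the bound $\int_0^\infty e^{-u}/\sqrt{u+\ln d}\,du\le \sqrt{\pi}$, which is where the constant $1$ (rather than something sharper) appears in the coefficient of $b$.
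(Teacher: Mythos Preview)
Your proof is correct and follows essentially the same approach as the paper: both use the tail-integration formula $\mean\xi=\int_0^\infty\prob\{\xi>t\}\,dt$, change variables via $t=a+b\sqrt{s}+cs$, and reduce to the bound $\int_0^\infty e^{-u}u^{-1/2}\,du=\sqrt{\pi}\le 2$. The only cosmetic difference is that the paper first shifts $s\to s+\ln d$ (using $\sqrt{s+\ln d}\le\sqrt{s}+\sqrt{\ln d}$) to absorb the factor $d$ before integrating, whereas you integrate first and then split at $s=\ln d$; the two routes are equivalent.
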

\begin{proof}
  First, if $b=c=0$, then $\prob\Cbr{\xi > a}<d e^{-s}$ for any $s>0$.
  Let $s\to\infty$ to get $\xi\le a$ and hence $\mean \xi\le
  a + b(\sqrt{\ln d}+1) + c(\ln d+1)$.  Assume $b+c>0$.  Condition
  \eqref{eq:xi-tail} implies
  $$
  \prob\Cbr{\xi > a + b\sqrt{s+\ln d} + c(s+\ln d)}
  \le e^{-s}, \quad s>0.
  $$
  By $\sqrt{s+\ln d} \le \sqrt{s}+\sqrt{\ln d}$,
  $$
  \prob\Cbr{\xi > a_0 + f(s)} \le e^{-s}, \quad s>0,
  $$
  where $a_0 = a + b\sqrt{\ln d} + c\ln d$ and $f(s)=b\sqrt{s} + c s$
  is a 1-to-1 and onto mapping $[0,\infty)\to [0,\infty)$.  Let
  $f^{-1}$ be the inverse of $f$.  Then
  \begin{align*}
    \mean \xi
    &
    = \int_0^{a_0} \prob\Cbr{\xi>t}\,\dd t + 
    \int_0^\infty \prob\Cbr{\xi>a_0+t}\,\dd t 
    \le a_0 + 
    \int_0^\infty \exp\Cbr{-f^{-1}(t)}\,\dd t \\
    &= a_0 + \int_0^\infty e^{-s} f'(s)\,\dd s
    = a_0 + \int_0^\infty e^{-s} (bs^{-1/2}/2 + c)\,\dd s
    \le a_0 + b+ c,
  \end{align*}
  which completes the proof.
\end{proof}

\begin{proof}[Proof of Lemma \ref{lemma:Uh-Q}]
  By independence and Jensen inequality,
  \begin{align*}
    \sup_{\vf u\in D} \Abs{\sm i N \rx_i s_{ih}(\vf v)}
    &
    =
    \sup_{\vf u\in D} 
    \Abs{\mean\Sbr{\sm i N \rx_i \varphi_i(X_i\tp\vtheta, X_i\tp\vf v)
      X_{ih}\ \vline\ \rx_i, Y_i,\, i\le N}} \\
    &
    \le 
    \mean\Sbr{
      U_h \gv \rx_i, Y_i,\, i\le N
    }.
  \end{align*}
  Integrating over $\eno Y N$ and $\eno\rx N$ leads to the first
  inequality in \eqref{eq:s-Uh}.  Given $\eno XN$, by Lemma
  \ref{lemma:hidden}, $t \to
  \varphi_i(X_i\tp\vtheta, t) X_{ih}$ maps 0 to 0 and is
  $\psi_2$-Lipschitz.  Therefore, by Lemma \ref{lemma:u-compare},
  H\"older inequality, and Lemma \ref{lemma:massart},
  \begin{align*} 
    \mean_\rx U_h 
    &\le
    2 \psi_2 \mean_\rx \sup_{\vf u\in D}\Abs{\sm i N \rx_i X_i\tp \vf v}
    \le 2\psi_2 R_D \mean_\rx \mx j p\Abs{\sm i N \rx_i X_{ij}}
    \nonumber \\
    &
    \le 2\psi_2 R_D \mx j p \sqrt{\sm i N X_{ij}^2}\times
    \sqrt{2\ln(2p)}
    = 2\sqrt{2\ln(2p)}\psi_2 R_D S_X.
  \end{align*}
  Integrating over $\eno X N$ yields the second inequality in
  \eqref{eq:s-Uh}.

  On the other hand, given $\eno X N$, by Lemma
  \ref{lemma:hidden},  $|\varphi_i(X_i\tp\vtheta, X_i\tp\vf v)|
  \le \psi_3$ for each $\vf u\in D$.  Since $\eno\rx N$ are i.i.d.\
  Rademacher variables independent of $\eno X N$, then by Lemma
  \ref{lemma:mc} inequality \eqref{eq:f-hoeff}, for each $h\le p$,
  \begin{align} \label{eq:Uh-tail}
    \prob\Cbr{U_h \ge \mean_\rx U_h + 2\psi_3\sqrt{2 s\sm iN
        X_{ih}^2} \ \vline\ \eno X N} \le e^{-s}.
  \end{align}
  
  Incorporate the bound on $\mean_\rx U_h$ into \eqref{eq:Uh-tail} and 
  apply union-sum inequality to 
  \begin{align*}
    \prob\Cbr{\mx hp U_h \ge 
      2\sqrt{2\ln(2p)} \psi_2 R_D S_X      
      + 2\sqrt{2s}\psi_3 S_X
      \ \vline\ \eno X N
    }\le pe^{-s}.
  \end{align*}
  By Lemma \ref{lemma:mean-max}, we get 
  \begin{align*}
    \mean\Sbr{\mx hp U_h \gv\eno X N}
    \le
    2\sqrt{2\ln(2p)} \psi_2 R_D S_X
    +2\sqrt{2}\psi_3 S_X (\sqrt{\ln p}+1)
  \end{align*}
  Integrating over $\eno X N$ yields the bound on $\mean\mx h p U_h$.

  Finally, by \eqref{eq:hidden-h-decomp} and $\normx{\vf u -
    \vtheta}_1\le R_D$ for all $\vf u\in D$,
  \begin{align*}
    Q = \sup_{\vf u\in D} 
    \Abs{\sm hp\sm i N \rx_i (z_{ih} + s_{ih}(\vf v)) v_h} 
    \le R_D\sup_{\vf u\in D} 
    \mx h p \Abs{\sm i N \rx_i(z_{ih} + s_{ih}(\vf v))
    }.
  \end{align*}
  Following the proof for the first inequality in \eqref{eq:s-Uh},
  \begin{align*}
    \mean Q 
    \le R_D 
    \Grp{
      \mean\mx h p \Abs{\sm i N \rx_i\lambda_i(X_i\tp\vtheta) X_{ih}}
      + \mean\mx h p U_h
    }.
  \end{align*}
  Given $\eno X N$, by Lemma \ref{lemma:hidden}, $\sm iN
  \Grp{\lambda_i(X_i\tp\vtheta) X_{ih}}^2 \le \psi_1^2 S_X^2$ for all
  $h\le p$.   Therefore, by Lemma \ref{lemma:massart} and Fubini
  theorem,
  \begin{align*}
    \mean \mx h p \Abs{\sm i N \rx_i\lambda_i(X_i\tp\vtheta) X_{ih}}
    \le \sqrt{2\ln (2p)}\,\psi_1\mean S_X,
  \end{align*}
  which together with the bound on $\mean\mx h p U_h$ yields the
  desired bound on $\mean Q$.
\end{proof}

\bibliographystyle{acmtrans-ims2}

\begin{thebibliography}{}

\bibitem{ambroladze:07}
{\rm Ambroladze, A.}, {\rm Parrado-Hern{\'a}ndez, E.}, {\rm and} {\rm
  Shawe-Taylor, J.} (2007).
\newblock Complexity of pattern classes and the {L}ipschitz property.
\newblock {\em Theoret. Comput. Sci.\/}~\textbf{382},~3, 232--246.

\bibitem{atencio:08}
{\rm Atencio, C.~A.}, {\rm Sharpee, T.~O.}, {\rm and} {\rm Schreiner, C.~E.}
  (2008).
\newblock Cooperative nonlinearities in auditory cortical neurons.
\newblock {\em Neuron\/}~\textbf{58},~6, 956--966.

\bibitem{bartlett:05}
{\rm Bartlett, P.~L.}, {\rm Bousquet, O.}, {\rm and} {\rm Mendelson, S.}
  (2005).
\newblock Local {R}ademacher complexities.
\newblock {\em {A}nn.\ {S}tatist.\/}~\textbf{33},~4, 1497--1537.

\bibitem{bickel:09:as}
{\rm Bickel, P.~J.}, {\rm Ritov, Y.}, {\rm and} {\rm Tsybakov, A.~B.} (2009).
\newblock Simultaneous analysis of lasso and {D}antzig selector.
\newblock {\em {A}nn.\ {S}tatist.\/}~\textbf{37},~4, 1705--1732.

\bibitem{bunea:etal:07}
{\rm Bunea, F.}, {\rm Tsybakov, A.}, {\rm and} {\rm Wegkamp, M.} (2007a).
\newblock Sparsity oracle inequalities for the {L}asso.
\newblock {\em Electron. J. Stat.\/}~{\em 1}, 169--194 (electronic).

\bibitem{bunea:07:as}
{\rm Bunea, F.}, {\rm Tsybakov, A.~B.}, {\rm and} {\rm Wegkamp, M.~H.} (2007b).
\newblock Aggregation for {G}aussian regression.
\newblock {\em {A}nn.\ {S}tatist.\/}~\textbf{35},~4, 1674--1697.

\bibitem{bunea:10:as}
{\rm Bunea, F.}, {\rm Tsybakov, A.~B.}, {\rm Wegkamp, M.~H.}, {\rm and} {\rm
  Barbu, A.} (2010).
\newblock Spades and mixture models.
\newblock {\em {A}nn.\ {S}tatist.\/}~\textbf{38},~4, 2525--2558.

\bibitem{candes:plan:09}
{\rm Cand{\`e}s, E.~J.} {\rm and} {\rm Plan, Y.} (2009).
\newblock Near-ideal model selection by $\ell_1$ minimization.
\newblock {\em {A}nn.\ {S}tatist.\/}~\textbf{37},~5A, 2145--2177.

\bibitem{candes:tao:07}
{\rm Cand{\`e}s, E.~J.} {\rm and} {\rm Tao, T.} (2007).
\newblock The {D}antzig selector: statistical estimation when {$p$} is much
  larger than {$n$}.
\newblock {\em {A}nn.\ {S}tatist.\/}~\textbf{35},~6, 2313--2351.

\bibitem{chi:10b}
{\rm Chi, Z.} (2010).
\newblock A local stochastic {L}ipschitz condition with application to {L}asso
  for high dimensional generalized linear models.
\newblock Tech. Rep. 10-45, University of Connecticut, Department of
  Statistics.
\newblock Available at \texttt{http://arXiv.org}.

\bibitem{klein:05:ap}
{\rm Klein, T.} {\rm and} {\rm Rio, E.} (2005).
\newblock Concentration around the mean for maxima of empirical processes.
\newblock {\em {A}nn.\ {P}robab.\/}~\textbf{33},~3, 1060--1077.

\bibitem{koltchinskii:06}
{\rm Koltchinskii, V.} (2006).
\newblock Local {R}ademacher complexities and oracle inequalities in risk
  minimization.
\newblock {\em {A}nn.\ {S}tatist.\/}~\textbf{34},~6, 2593--2656.

\bibitem{ledoux:01}
{\rm Ledoux, M.} (2001).
\newblock {\em The concentration of measure phenomenon}. Mathematical Surveys
  and Monographs, Vol.~\textbf{89}.
\newblock American Mathematical Society, Providence, RI.

\bibitem{ledoux:91}
{\rm Ledoux, M.} {\rm and} {\rm Talagrand, M.} (1991).
\newblock {\em Probability in {B}anach spaces}. Ergebnisse der Mathematik und
  ihrer Grenzgebiete (3) [Results in Mathematics and Related Areas (3)],
  Vol.~\textbf{23}.
\newblock Springer-Verlag, Berlin.
\newblock Isoperimetry and processes.

\bibitem{massart:00:ap}
{\rm Massart, P.} (2000a).
\newblock About the constants in {T}alagrand's concentration inequalities for
  empirical processes.
\newblock {\em {A}nn.\ {P}robab.\/}~\textbf{28},~2, 863--884.

\bibitem{massart:00}
{\rm Massart, P.} (2000b).
\newblock Some applications of concentration inequalities to statistics.
\newblock {\em Ann. Fac. Sci. Toulouse Math. (6)\/}~\textbf{9},~2, 245--303.
\newblock Probability theory.

\bibitem{meir:zhang:04}
{\rm Meir, R.} {\rm and} {\rm Zhang, T.} (2004).
\newblock Generalization error bounds for {B}ayesian mixture algorithms.
\newblock {\em {J}.\ {M}ach.\ {L}earn.\ {R}es.\/}~\textbf{4},~5, 839--860.

\bibitem{pollard:84}
{\rm Pollard, D.} (1984).
\newblock {\em Convergence of stochastic processes}.
\newblock Springer Series in Statistics. Springer-Verlag, New York.

\bibitem{ravikumar:etal:10}
{\rm Ravikumar, P.}, {\rm Wainwright, M.~J.}, {\rm and} {\rm Lafferty, J.~D.}
  (2010).
\newblock High-dimensional {I}sing model selection using {$\ell_1$}-regularized
  logistic regression.
\newblock {\em {A}nn.\ {S}tatist.\/}~\textbf{38},~3, 1287--1319.

\bibitem{vandegeer:08}
{\rm van~de Geer, S.~A.} (2008).
\newblock High-dimensional generalized linear models and the lasso.
\newblock {\em {A}nn.\ {S}tatist.\/}~\textbf{36},~2, 614--645.

\bibitem{zhang:09}
{\rm Zhang, T.} (2009).
\newblock Some sharp performance bounds for least squares regression with $l_1$
  regularization.
\newblock {\em {A}nn.\ {S}tatist.\/}~\textbf{37},~5A, 2109--2144.

\bibitem{zhang:yu:05}
{\rm Zhang, T.} {\rm and} {\rm Yu, B.} (2005).
\newblock Boosting with early stopping: convergence and consistency.
\newblock {\em {A}nn.\ {S}tatist.\/}~\textbf{33},~4, 1538--1579.

\bibitem{zhao:yu:06}
{\rm Zhao, P.} {\rm and} {\rm Yu, B.} (2006).
\newblock On model selection consistency of {L}asso.
\newblock {\em J. Mach. Learn. Res.\/}~{\em 7}, 2541--2563.

\end{thebibliography}

\section*{Appendix: miscellaneous proofs}
In this section, we collect proofs for the lemmas and propositions in
the main text.
\begin{proof}[Proof of Lemma \ref{lemma:mc}]
  First, assume $D$ is finite.  Let $D_+=D\times \{1\}$ and
  $D_-=D\times \{-1\}$ and denote $T=D_+\cup D_-$.   The random
  vectors $X_i = (\sigma f_i(\vf u), (\vf u,\sigma)\in T)$, $i\le N$,
  are independent taking values in $\Reals^T$.  It is easy to check
  that
  $$
  W = \max_{\vf u\in D} \Cbr{\sm i N f_i(\vf u),
    -\sm i N f_i(\vf u)} = \max_{t\in T} \sm i N X_{i,t},
  $$
  where $X_{i,t}$ denotes the $t$-th coordinate of $X_i$.  Since 
  $a_i\le X_{i,t}\le b_i$ for $t=(\vf u, 1)$ and $-b_i \le X_{i,t} \le
  -a_i$ for $t=(\vf u, -1)$, by Theorem 9 of \cite{massart:00:ap},
  letting $L^2 = \sm i N (b_i -a_i)^2$,
  $$
  \prob\Cbr{W \ge \mean W + x} \le \exp\Cbr{-\frac{x^2}{2 L^2}},
  \quad x>0,
  $$
  which implies \eqref{eq:f-hoeff}.

  Still assuming $D$ is finite, assume moreover that $\mean f_i(\vf
  u)=0$ for all $i\le N$ and $\vf u\in D$.  For each $t=(\vf
  u,\sigma)\in T$, define $s_t = (s\sp 1_t, \ldots, s\sp N_t)$, with
  each $s\sp i_t$ being the map $x\to x_t/M$.   Then w.p.~1, $s\sp
  i_t(X_i) = \sigma f_i(\vf u)/M \in [-1,1]$ with mean 0 for $i\le N$
  and
  \begin{align*}
    \sup_{t\in T} \var\Grp{\sm i N s\sp i_t(X_i)}
    &= \sup_{\vf u\in D,\,\sigma=\pm 1} 
    \var\Grp{\sm i N \sigma f_i(\vf u)/M}  \\
    &
    = \sup_{\vf u\in D} 
    \sm i N \var\Grp{f_i(\vf u)/M} \le (S/M)^2.
  \end{align*}
  We next apply Theorem 1.1 of \cite{klein:05:ap} to $\tilde W =
  \sup_{t\in T} \sm i N s\sp i_t(X_i)$.  Let $w = 2 \mean\tilde W +
  (S/M)^2$.  Then for any $a>0$,
  $$
  \prob\Cbr{\tilde W \ge \mean\tilde W + a}
  \le \exp\Cbr{-\frac{a^2}{2 w + 3a}}.
  $$
  For any $s>0$, $a=(3s + \sqrt{9s^2 + 8ws})/2$ is the unique positive
  solution to $a^2/(2w+3a)=s$.  By using $\sqrt{x+y}\le \sqrt{x} +
  \sqrt{y}$ and $2\sqrt{xy}\le x + y$ for $x,y\ge 0$, it is seen that
  $a\le \mean\tilde W + (S/M)\sqrt{2s} + 4s$.  So
  $$
  \prob\Cbr{\tilde W \ge 2 \mean\tilde W + (S/M)\sqrt{2s} + 4s}
  \le e^{-s}.
  $$
  Since $\tilde W = W/M$, \eqref{eq:mc} then follows.

  For an arbitrary $D$, by the path continuity of $f_i$, $W =
  \lim_n \sup_{\vf u \in D_n} |\sm i N f_i(\vf u)|$, with
  $D_1 \subset D_2 \ldots$ being a (nonrandom) sequence of
  finite subsets of $D$.  Then the proof is complete by monotone
  convergence.
\end{proof}

\begin{proof}[Proof of Lemma \ref{lemma:phi}]
  Denote $f_i(\cdot) = \gamma_i(\cdot,Y_i)$.  For any $\vf
  s\in\Reals^k$, whether or not $s_j=0$,
  $$
  \varphi_{ij}(\vf s) = \int_0^1 \Grp{
    \partial_j f_i(\vf c_i+\bar\pi_{j-1} \vf s + s_j u \vf e_j)
    -     \partial_j f_i(\vf c_i)}\,\dd u,
  $$
  where $\vf e_j$ is the $j$th standard basis vector of $\Reals^k$.
  It follows that $\varphi_{ij}(\vf 0)=0$ and for $\vf s\in\Reals^k$,
  $$
  |\varphi_{ij}(\vf s)|
  \le \int_0^1
  \Abs{\partial_j f_i(\vf c_i+\bar\pi_{j-1} \vf s + s_j u\vf e_j)-
    \partial_j f_i(\vf c_i)
  }\,\dd u \le 2F_1.
  $$
  Furthermore, for $\vf t\in\Reals^k$,
  $$
  |\varphi_{ij}(\vf s) - \varphi_{ij}(\vf t)|
  \le \int_0^1
    \Abs{\partial_j f_i(\vf c_i+\bar\pi_{j-1} \vf s + s_j u \vf e_j)-
      \partial_j f_i(\vf c_i+\bar\pi_{j-1} \vf t + t_j u \vf e_j)
    }\,\dd u.
  $$
  Since $\partial_j f_i$ is $(F_2, \ell_\infty)$-Lipschitz, the
  integrated function on the right hand side is no greater than
  $F_2\normx{\bar\pi_{j-1} (\vf s - \vf t) + (s_j - t_j) u \vf
    e_j}_\infty \le F_2\normx{\vf s - \vf t}_\infty$, and so the
  integral is no greater than $F_2 \normx{\vf s-\vf t}_\infty$,
  proving $\varphi_{ij}$ is $(F_2, \ell_\infty)$-Lipschitz.  In 
  particular, letting $\vf s=\vf0$ and $\vf t = Z_i(\vf u -
  \vtheta)$ gives $|\varphi_{ij}(\vf t)|\le F_2 \normx{Z_i(\vf u -
    \vtheta)}_\infty  = F_2 \max_j |Z_{ij}\tp (\vf u-\vtheta)| \le
  F_2 M_Z\normx{\vf u - \vtheta}_1 \le F_2 M_Z R_D$.  
\end{proof}

The proof of Lemma \ref{lemma:phi-glob} is similar to Lemma
\ref{lemma:phi}.

\begin{proof}[Proof of Proposition \ref{prop:hidden}]
  The proof is more or less standard (cf.\ \cite{bickel:09:as}), so we
  will be brief.  By definition of $\est\vtheta$ and the assumptions
  of Proposition \ref{prop:hidden}, w.p.~at least $1-q$,
  \begin{align*}
    C_\ell\normx{\est\vtheta-\vtheta}_2^2
    \le
    \mean\ell(\vtheta) - \mean\ell(\est\vtheta)
    &\le
    \dev{-.35ex}{\ell(\smash{\est\vtheta}) - \ell(\vtheta)} -
    \lambda(\normx{\est\vtheta}_1 - \normx{\vtheta}) \\
    &
    \le
    M_q \normx{\est\vtheta-\vtheta}_1 -  K M_q(\normx{\est\vtheta}_1 -
    \normx{\vtheta}_1).
  \end{align*}
  Let $A=\sppt(\vtheta)$ and $B$ the union of $A$ and the indices
  corresponding to the $S$ largest $|\est\theta_h|$ outside of $A$.
  Let $a = M_q/C_\ell$.  Then by the above inequality, for $J=A$, $B$, 
  $$
  \normx{\pi_J\est\vtheta - \vtheta}_2^2
  + \normx{\pi_{J^c}\est\vtheta}_2^2
  \le (K+1) a \normx{\pi_J\est\vtheta-\vtheta}_1 - (K-1)
  a\normx{\pi_{J^c} \est\vtheta}_1,
  $$
  giving
  $$
  \normx{\pi_J\est\vtheta - \vtheta}_2 \le
  (K+1) a\sqrt{|J|}, \qquad
  \normx{\pi_{J^c}\est\vtheta}_1 
  \le c\normx{\pi_J\est\vtheta - \vtheta}_1,
  $$
  with $c=(K+1)/(K-1)$.  Since $\normx{\pi_{B^c}\est\vtheta}_2^2\le
  \normx{\pi_{A^c} \est\vtheta}_1^2/S$ (cf.~\cite{candes:tao:07}), the
  second inequality in the display gives $\normx{\pi_{B^c}
    \est\vtheta}_2^2 \le 
  c^2\normx{\pi_A \est\vtheta - \vtheta}_2^2$.  The proof obtains by
  combining this and the first inequality in the display, applied to
  $A$ and $B$, respectively.
\end{proof}

\begin{proof}[Proof of Proposition \ref{prop:hmm}]
  Recall $X_1$ has $nL$ entries, such that for $t\le n$ and $a\le L$,
  the $((t-1)L+a)$-th entry is $\cf{\omega_{1t}=a}$.  Since
  $\normx{X_1}_\infty \le 1$, it is not hard to see that
  $\mean\ell(\vf u)$ is continuously differentiable in $\vf u$ and its
  Hessian at $\vtheta$ is  $H_N = N H$, where 
  $$
  H = \var(X_1) - \mean(\var(X_1\gv Y_1)) = \var(\mean(X_1\gv
  Y_1)).
  $$

  First, we show that for any $\vf u\not=\vtheta$, $\mean[\ell(\vf
  u)]>\mean[\ell(\vtheta)]$.  By Jensen inequality, 
  $\mean[\ell(\vf u)]\ge \mean[\ell(\vtheta)]$ with equality if and
  only there is a constant $c=c(\vf u)$ such that for all $y\in
  \Reals^n$, $\mean(e^{X_1\tp(\vf u-\vtheta)}\gv Y_1=y) = c$.  If for
  some $\vf u$ the equality holds, then
  $$
  \sum_{z\in A^n} e^{x(z)\tp(\vf u-\vtheta)}
  \prod_{t\le n} e^{-(y_t - z_t)^2/2\sigma^2} \pi(z\gv\vtheta) = 
  c \sum_{z\in A^n} \prod_{t\le n} e^{-(y_t - z_t)^2/2\sigma^2}
  \pi(z\gv\vtheta),
  $$
  where $x(z)\in \{0,1\}^{nL}$ such that its $((t-1)L+a)$-th entry is
  $\cf{z_t = a}$.  Denote
  \begin{align*}
    p(z) = (e^{x(z)\tp(\vf u-\vtheta)}-c) \prod_{t\le n}
    e^{-z_t^2/2\sigma^2} \pi(z\gv\vtheta)
  \end{align*}
  and make change of variable $y_t/\sigma^2\to y_t$.  Then for all $y\in
  \Reals^n$, $\sum_{z\in A^n} \exp(y\tp z) p(z)=0$.
  In other words,
  the Laplace transform of $p(z)$ is 0.  Therefore, $p(z)\equiv 0$.
  By assumption, $\pi(z\gv\vtheta)>0$ for all $z\in \Reals$.  As a
  result, $x(z)\tp(\vf u-\vtheta) = \ln c$ for all $z\in A^n$.  Let
  all $z_t=L+1$ to get $x(z)=\vf0$ and hence $\ln c=0$.  Next, given
  $t\le n$ and $a\le L$, let $z_t=a$ while for $s\not=t$, let
  $z_s=L+1$.  This yields $u_{(t-1)L+a}-\theta_{(t-1)L+a}=0$.  Thus
  $\vf u = \vtheta$.
  
  Because $\mean\ell(\vf u)$ is continuous in $\vf u\in D$, with $D$
  being bounded, to finish the proof, it remains to show that $H$ is
  positive definite.  Suppose $\vf v\tp H\vf v=0$ for
  some $\vf v$.  We need to show that $\vf v=0$.  Since $\vf v\tp H\vf
  v = \var(\mean(X_1\tp v\gv Y_1))$, there is a constant $c\in\Reals$,
  such that $\mean(X_1\tp v\gv Y_1=y)=c$ for all $y\in \Reals^n$.
  Following the argument employed to show $\mean\ell(\vf
  u)>\mean\ell(\vtheta)$ for $\vf u\not=\vtheta$, it can be shown that
  $\vf v=0$.  Thus the proof is complete.
\end{proof}

\begin{proof}[Proof of Lemma \ref{lemma:hidden}]
  Part (1) is straightforward from Assumption \ref{a:hidden}
  and the definition of $\lambda_i$ and $\varrho$.  To prove (2), observe
  \begin{align} \label{eq:varphi-int}
    \varphi_i(s,t) = \nth{g_i(s)} \int_0^1 [g_i'(s+tu)-g_i'(s)]\,\dd u.
  \end{align}
  Then
  \begin{align*}
    |\varphi_i(s,t') - \varphi_i(s,t)|
    &
    \le \nth{A_g} \int_0^1 |g_i'(s+t'u)-g_i'(s+tu)|\,\dd u \\
    &
    \le \nth{A_g} \int_0^1 F_2 |t'-t|u\,\dd u =
    \frac{F_2|t'-t|}{2A_g},
  \end{align*}
  showing $\varphi_i(s,\cdot)$ is $(F_2/2A_g)$-Lipschitz.  As a
  result, $\varphi_i(s,\cdot) X_{ih}$ is $\psi_2$-Lipschitz.  Let
  $t'=0$ and $t=X_i\tp(\vf u-\vtheta)$.  Then $\varphi_i(s,t')=0$,
  so by $|t|\le M_X R_D$, $|\varphi_i(s,t)|\le \psi_2 M_X R_D$.  On the
  other hand, \eqref{eq:varphi-int} implies  $|\varphi_i(s,t)|\le
  2F_1/A_g$.  Therefore, $|\varphi_i(s,t)|\le \psi_3$.  
\end{proof}

\begin{proof}[Proof of Lemma \ref{lemma:bivariate-lip}]
  Given $(s,t)$, $(s',t')\in J$, let $d_s = s'-s$, $d_t=t'-t$.  By
  Taylor expansion, for some $\theta\in (0,1)$, $g(s',t') - g(s,t)
  = \partial_1 g(s+\theta d_s, t+\theta d_t) d_s + \partial_2
  g(s+\theta , t+\theta d_t) d_t$.  Since $\partial_1 g(s,t) =
  \varrho(t)$ and $\partial_2 g(s,t) = s \varrho'(t)$, then by Lemma
  \ref{lemma:hidden},
  $$
  |g(s',t') - g(s,t)| \le \varrho_0 |s'-s| + \psi_3 \varrho_1
  |t'-t|.
  $$
  Therefore, $h$ is $(\psi_6/2, \ell_\infty)$-Lipschitz.  On the
  other hand, since $g(0,t)=0$, for some $\theta\in (0,1)$, 
  $$
  |g(s,t)| = |g(s,t)-g(0,t)| = |\partial_1g(\theta s,t)||s|
  \le \varrho_0|s|.
  $$
  Similarly, since $g(s,0)=0$, $|g(s,t)|\le \psi_3 \varrho_1
  |t|$.  As a result, $|g(s,t)|\le (\psi_6/2) \min(|s|,|t|)$.
\end{proof}

\end{document}